\newcommand{\bibsortkey}[1]{}
\newtheorem{theorem}{Theorem}[section]
\newtheorem{lemma}[theorem]{Lemma}
\newtheorem{proposition}[theorem]{Proposition}
\newtheorem{corollary}[theorem]{Corollary}
\newtheorem{claim}[theorem]{Claim}
\theoremstyle{definition}
\newtheorem{definition}[theorem]{Definition}
\newtheorem{remark}[theorem]{Remark}
\newtheorem{example}[theorem]{Example}
\DeclareMathOperator{\core}{core}
\DeclareMathOperator{\reg}{reg}
\DeclareMathOperator{\intr}{int}
\DeclareMathOperator{\cl}{cl}
\newcommand{\ds}{\mathord{\rho}}
\newcommand{\dcore}{\core_d}
\newcommand{\upset}{\mathord{\uparrow}}
\newcommand{\downset}{\mathord{\downarrow}}
\newcommand{\clopup}{{\sf ClopUp}}
\newcommand{\clopsup}{{\sf ClopSUp}}
\newcommand{\cat}[1]{{\sf #1}\xspace}
\newcommand{\functor}[1]{\mathscr #1}
\newcommand{\DLat}{\cat{DLat}}
\newcommand{\Pries}{\cat{Pries}}
\newcommand{\LPries}{\cat{LPries}}
\newcommand{\SL}{\cat{SLPries}}
\newcommand{\AlgL}{\cat{AlgL}}
\newcommand{\AriL}{\cat{AriL}}
\newcommand{\Frm}{\cat{Frm}}
\newcommand{\SFrm}{\cat{SFrm}}
\newcommand{\AlgFrm}{\cat{AlgFrm}}
\newcommand{\AriFrm}{\cat{AriFrm}}
\colorlet{red}{OrangeRed}
\crefname{claim}{Claim}{Claims}
\setlist[enumerate,1]{label={\upshape(\arabic*)}}
\edef\plabelformat{\string#2\string#1\string#3}
\edef\plabelrangeformat{\string#3\string#1\string#4--\string#5\string#2\string#6}
\newcommand{\plabel}[1]{\label{#1}
\immediate\write\@auxout{\noexpand\crefformat{#1}{\noexpand\cref{#1}\plabelformat}
\noexpand\crefmultiformat{#1}{\noexpand\cref{#1}\plabelformat}{,\plabelformat}{,\plabelformat}{,\plabelformat}
\noexpand\crefrangeformat{#1}{\noexpand\cref{#1}\plabelrangeformat}}}
\title{Maximal $d$-spectra via Priestley duality}
\patchcmd{\@setaddresses}{\indent}{\noindent}{}{}
\patchcmd{\@setaddresses}{\indent}{\noindent}{}{}
\patchcmd{\@setaddresses}{\indent}{\noindent}{}{}
\patchcmd{\@setaddresses}{\indent}{\noindent}{}{}
\author{G.~Bezhanishvili, P.~Bhattacharjee, S.\ D.~Melzer}
\keywords{Pointfree topology, Priestley duality, arithmetic frame, $d$-nucleus, $d$-ideal}
\subjclass[2020]{
	18F70; 	06D22; 	06E15;   06F20; 	54D10;   54D30; }
\begin{document}

\begin{abstract}
	We use Priestley duality as a new tool to study maximal $d$-spectra of arithmetic frames, both with and without units.
	We pay special attention to when the maximal $d$-spectrum is compact or Hausdorff. Various necessary and sufficient
	conditions are given, including a construction of an arithmetic frame with a unit whose maximal $d$-spectrum is not
	Hausdorff, thus resolving an open problem in the literature. 
\end{abstract}

\begingroup
\def\uppercasenonmath#1{}
\let\MakeUppercase\relax
\maketitle
\endgroup

\tableofcontents

\section{Introduction}

The space of maximal $d$-ideals of an archimedean Riesz space with a weak order unit, equipped with the hull-kernel
topology, has been well studied and is known to be a compact Hausdorff space (see, e.g., \cite{HuijsmansPagter1980}). 
Motivated by this, Martinez and Zenk \cite{MartinezZenk2003} initiated the study of $d$-elements in an arbitrary
arithmetic frame. These elements, denoted $dL$ or $L_d$, form a sublocale of the arithmetic frame $L$. The corresponding
nucleus was coined the \emph{$d$-nucleus}. The frame $L_d$ and the spectrum $\max L_d$ of maximal $d$-elements were
further studied by various authors (see, e.g.,
\cite{DubeIghedo2014,DubeSithole2019,Bhattacharjee2018,Bhattacharjee2024}). It is known that $\max L_d$ is a compact
$T_1$-space provided $L$ has a unit. In \cite{Bhattacharjee2018}, it was left open whether $\max L_d$ is Hausdorff.
Although some characterizations of the Hausdorff separation for $\max L_d$ were recently established in
\cite{Bhattacharjee2024}, the question remained open. Our aim is to answer it in the negative.

Our main tool is Priestley duality for frames. Priestley originally developed her duality for bounded distributive
lattices \cite{Priestley1970,Priestley1972}. It was restricted to the category of frames by Pultr and Sichler
\cite{PultrSichler1988}, and later the Priestley spaces of arithmetic frames were characterized in
\cite{BezhanishviliMelzer2023}. Building on this work, we describe the subset $N_d$ of the Priestley space $X$ of an
arithmetic frame $L$ corresponding to the $d$-nucleus on $L$. We also describe the subset $Y_d$ of $N_d$ corresponding
to the spectrum $pt(L_d)$ of points of the sublocale $L_d$. We show that the minimum of $Y_d$ is in a one-to-one
correspondence with the maximal $d$-elements of $L$, thus yielding a homeomorphism between $\min Y_d$ and $\max L_d$.
This allows us to study $\max L_d$ in the language of Priestley spaces.
Our main results include the description of the nucleus on $L$ whose fixpoints are the frame of opens of $\min Y_d$,
the characterization of the soberification of $\min Y_d$, and the construction of an arithmetic frame $L$ with a unit
such that $\min Y_d$ is not Hausdorff. This yields that $\max L_d$ is not Hausdorff, thus resolving the open question
mentioned above. We also give a necessary and sufficient condition for $\max L_d$ to be Hausdorff. This characterization
remains valid even if $L$ doesn't have a unit, provided $\max L_d$ is locally compact. We also investigate the
compactness of $\min Y_d$, and hence of $\max L_d$, in comparison with the existence of a unit. Our approach raises
new open questions and highlights the need for further study of maximal $d$-spectra of arithmetic frames (see the end of
the paper).

The paper is structured as follows. In \cref{section 2}, we recall Priestley duality for arithmetic frames, along with
useful definitions and results from the literature. In \cref{section 3}, we revisit the relationship between nuclei and
sublocales, as well as their description in the language of Priestley spaces. 
In particular, we show how to use Priestley duality to give alternative proofs of two existing results in the literature; Johonstone's lemma that each Scott open filter arises as the dense elements of a nucleus and the Isbell Density Theorem.

In \cref{section 4}, we characterize inductive nuclei in the language of Priestley spaces, as well as provide the dual
description of the $d$-nucleus on an arithmetic frame $L$. In \cref{new section 5}, we study the maximum of the localic
part of the Priestley dual of $L$, which yields a new characterization of when the sublocale $L_d$ is regular. 
In \cref{section 5}, we delve into the investigation of the spectrum $\max L_d$ of maximal $d$-elements of an arithmetic
frame. We establish a homeomorphism between $\max L_d$ and $\min Y_d$, thus giving us a new tool to study the maximal
$d$-spectrum of $L$. We show that the frame of open sets of $\min Y_d$ can be realized as a sublocale of $L$ and
describe the corresponding nuclear subset of $X$. In addition, we prove that the localic part of this nuclear subset is
the soberification of $\min Y_d$.

In \cref{sec: compactness}, we study the topological properties of $\min Y_d$ in comparison to what is known about
$\max L_d$. We describe compact subsets of $\min Y_d$, which allows us to characterize when an arithmetic frame has a
unit using Priestley duality. This in particular yields that in the presence of a unit, $\min Y_d$ is a compact space. 

Finally, in \cref{Section 7}, we explore the Hausdorff separation for the space $\min Y_d$. An example of an arithmetic
frame $L$ with a unit is constructed such that $\max L_d$ and hence $\min Y_d$ is not Hausdorff, thus resolving an open
question from \cite{Bhattacharjee2018}. Furthermore, we give a characterization of when $\min Y_d$ is Hausdorff, which
generalizes to arithmetic frames without units, provided $\min Y_d$ is locally compact. The article concludes with
several open questions that the authors are looking into to further the study of $\min Y_d$.

\section{Priestley duality for frames} \label{section 2}
A \emph{frame} is a complete lattice $L$ such that
\[
	a \wedge \bigvee S = \bigvee \{a \wedge s \mid s \in S\}
\]
for all $a \in L$ and $S \subseteq L$. A \emph{frame homomorphism} is a map between frames that preserves finite meets
and arbitrary joins. Let \Frm be the category of frames and frame homomorphisms. A frame $L$ is \emph{spatial} if it is
isomorphic to the frame of opens of a topological space (equivalently, completely prime filters separate elements of
$L$). Let \SFrm be the full subcategory of \Frm consisting of spatial frames.

An element $a \in L$ is {\em compact} if for each $S \subseteq L$, from $a \leq \bigvee S$ it follows that
$a \leq \bigvee T$ for some finite $T \subseteq S$. Let $K(L)$ be the set of compact elements of $L$. Then $L$ is
\emph{algebraic} provided
\[
	a = \bigvee \{b \in K(L) \mid b \leq a\}
\]
for each $a \in L$. It is well known (see, e.g., \cite[Rem.~3.4]{MartinezZenk2003}) that every algebraic frame is
spatial. A frame homomorphism $h : L \to M$ is \emph{coherent} if $h[K(L)] \subseteq K(M)$. Let \AlgFrm be the category
of algebraic frames and coherent frame homomorphisms between them. An algebraic frame $L$ is \emph{arithmetic} if $K(L)$
is closed under binary meets. Let \AriFrm be the full subcategory of \AlgFrm consisting of arithmetic frames.

A space is \emph{zero-dimensional} if it has a basis of clopen sets. A \emph{Stone space} is a zero-dimensional,
compact, Hausdorff space. A \emph{Priestley space} is a pair $(X,\le)$ such that $X$ is a Stone space and $\le$ is a
partial order on $X$ such that the \emph{Priestley separation} holds:
\begin{center}
	If $x \nleq y$ then there is a clopen upset $U$ of $X$ containing $x$ and missing $y$.    
\end{center}
A \emph{Priestley morphism} is a continuous order-preserving map between Priestley spaces. Let \Pries be the category of
Priestley spaces and Priestley morphisms. Let \DLat be the category of bounded distributive lattice and bounded lattice
homomorphisms.

\begin{theorem}[Priestley duality]
	\DLat and \Pries are dually equivalent.
\end{theorem}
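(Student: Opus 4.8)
The plan is to establish the dual equivalence through the usual pair of contravariant functors and to verify that the resulting unit and counit are natural isomorphisms. From \DLat to \Pries, send a bounded distributive lattice $L$ to its space of prime filters $X_L$, topologized by the subbasis $\{\phi(a) \mid a \in L\} \cup \{X_L \setminus \phi(a) \mid a \in L\}$, where $\phi(a) = \{P \in X_L \mid a \in P\}$, and ordered by inclusion; a bounded lattice homomorphism $h \colon L \to M$ goes to $h^{-1} \colon X_M \to X_L$, which is well defined because the preimage of a prime filter is a prime filter. From \Pries to \DLat, send a Priestley space $X$ to the bounded distributive lattice $\clopup(X)$ of its clopen upsets, and a Priestley morphism $f \colon X \to Y$ to $f^{-1} \colon \clopup(Y) \to \clopup(X)$; this lands in $\clopup(X)$ since $f$ is continuous and order preserving. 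Functoriality is immediate in both directions.

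The first thing I would verify is that these assignments land in the right categories on objects. That $\clopup(X)$ is a bounded distributive sublattice of the powerset of $X$ is clear. For $X_L$: identifying prime filters with their characteristic functions exhibits $X_L$ as a closed subspace of the Stone space $\{0,1\}^L$ (each defining condition of a prime filter is closed), so $X_L$ is a Stone space; the subbasic sets $\phi(a)$ and their complements are clopen, so $X_L$ is zero-dimensional; and Priestley separation holds because if $P \not\subseteq Q$ then any $a \in P \setminus Q$ gives a clopen upset $\phi(a)$ containing $P$ and missing $Q$. (Compactness of the product uses the Boolean prime ideal theorem; alternatively one invokes the prime filter theorem for distributive lattices via Alexander's subbasis lemma.)

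The core of the proof is to show that the candidate unit $\eta_L \colon L \to \clopup(X_L)$, $a \mapsto \phi(a)$, and counit $\varepsilon_X \colon X \to X_{\clopup(X)}$, $x \mapsto \{U \in \clopup(X) \mid x \in U\}$, are isomorphisms in the respective categories. That $\eta_L$ is a bounded lattice homomorphism and that $\varepsilon_X(x)$ is a prime filter are routine, as is continuity of $\varepsilon_X$ (the preimage of $\phi(U)$ is $U$). Injectivity of both maps follows from separation: $\eta_L$ is injective by the prime filter theorem (if $a \nleq b$ there is a prime filter containing $a$ but not $b$), and $\varepsilon_X$ is injective and order reflecting by Priestley separation. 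The main obstacle is surjectivity, and this is where compactness is used essentially. For $\eta_L$ one shows every clopen upset $U$ of $X_L$ is some $\phi(a)$: for each $P \in U$, the upset property gives $P \not\subseteq Q$ for all $Q \notin U$, so by compactness of the closed set $X_L \setminus U$ one finds $d_P \in P$ with $\phi(d_P) \subseteq U$; then $U$ is covered by finitely many such $\phi(d_P)$ and equals $\phi$ of their join. For $\varepsilon_X$ one shows every prime filter $F$ of $\clopup(X)$ equals $\varepsilon_X(x)$ for some $x$: the family $\{U \mid U \in F\} \cup \{X \setminus V \mid V \notin F\}$ of closed sets has the finite intersection property precisely because $F$ is a prime filter, so by compactness of $X$ there is a point $x$ in the intersection, and then $\varepsilon_X(x) = F$. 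Since $\varepsilon_X$ is a continuous order isomorphism between the Stone spaces underlying two Priestley spaces, it is a \Pries-isomorphism. Finally, naturality of $\eta$ and $\varepsilon$ is a short diagram chase ($\clopup(h^{-1})(\phi_L(a)) = \phi_M(h(a))$, and dually for $\varepsilon$), and together these facts give the asserted dual equivalence.
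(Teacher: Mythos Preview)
The paper does not prove this theorem; it is stated as classical background (with attribution to Priestley) and only the functors $\functor X$ and $\functor D$ are described in the remark that follows. Your proposal is a correct and complete sketch of the standard proof of Priestley duality, covering exactly the points one needs: well-definedness of the two functors, the Stone/Priestley structure on $X_L$, and the verification that the Stone map $\eta_L$ and the evaluation map $\varepsilon_X$ are natural isomorphisms via the prime filter theorem and compactness arguments. There is nothing to compare against in the paper itself, so your write-up simply supplies what the paper takes for granted.
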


\begin{remark}
	The functors $\functor X : \DLat \to \Pries$ and $\functor{D} : \Pries \to \DLat$ establishing Priestley duality are
	described as follows.
	\begin{itemize}
		\item The \emph{Priestley space} of a bounded distributive lattice $D$ is the set $X_D$ of prime filters of $D$
		ordered by inclusion and topologized by the basis $\{\varphi(a) \setminus \varphi(b) \mid a,b \in D\}$, where
		$\varphi$ is the \emph{Stone map} defined by $\varphi(a) = \{x \in X_D \mid a \in x\}$ for all $a \in D$. The
		functor $\functor X$ assigns to each $D \in \DLat$ its Priestley space $X_D$, and to each bounded lattice
		homomorphism $h : D \to E$ the Priestley morphism $h^{-1} : X_E \to X_D$.
		\item The functor $\functor D$ assigns to each Priestley space $X$ the bounded distributive lattice $\clopup(X)$ of
		clopen upsets of $X$ and to each Priestley morphism $f : X \to Y$ the bounded lattice homomorphism
		$f^{-1} : \clopup(Y) \to \clopup(X)$.
	\end{itemize}
\end{remark}

\begin{definition}
	\begin{enumerate}
		\item[]
		\item An \emph{L-space} (\emph{localic space}) is a Priestley space such that the closure of each open upset is an
		open upset.
		\item An \emph{L-morphism} is a Priestley morphism $f : X \to Y$ between L-spaces satisfying
		$f^{-1}(\cl U) = \cl f^{-1}(U)$ for each open upset $U$ of $Y$.
		\item \LPries is the category of L-spaces and L-morphisms.
	\end{enumerate}
\end{definition}

\begin{remark}\label{esakia}
	Recall that frames are precisely complete Heyting algebras. By Esakia duality \cite{Esakia1974}, the Priestley spaces
	of Heyting algebras (Esakia spaces) are those with the property that the closure of each upset is an upset. Moreover,
	if $X$ is the Priestley space of an Heyting algebra $L$, then $L$ is complete iff $X$ is
	\emph{extremally order-disconnected}; that is, the closure of each open upset is open. Thus, L-spaces are precisely
	extremally order-disconnected Esakia spaces.
\end{remark}

\begin{theorem}[Pultr-Sichler duality]
	\Frm and \LPries are dually equivalent.
\end{theorem}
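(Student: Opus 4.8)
The plan is to derive this dual equivalence by restricting Priestley duality $\DLat \simeq \Pries^{\mathrm{op}}$ along the correspondences ``frame $\leftrightarrow$ L-space'' on objects and ``frame homomorphism $\leftrightarrow$ L-morphism'' on morphisms, leaning on \cref{esakia} for the object part and on a direct computation with joins for the morphism part.

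For the objects I would first recall that the objects of $\Frm$ are precisely the complete Heyting algebras: a frame has implication $a \to b = \bigvee\{c \mid c \wedge a \le b\}$ by the frame distributive law, and conversely every complete Heyting algebra is a frame. Then, by Esakia duality together with the completeness criterion recalled in \cref{esakia}, a bounded distributive lattice $D$ is the reduct of a frame exactly when $X_D$ is an L-space, and dually $\clopup(X)$ is a frame exactly when $X$ is an L-space. I would also record that in this case the join in $\clopup(X)$ of a family $\{U_i\}$ of clopen upsets is $\cl\bigl(\bigcup_i U_i\bigr)$ — the least clopen upset containing $\bigcup_i U_i$ — which is an upset because $X$ is an Esakia space and open because $X$ is an L-space.

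For the morphisms, recall that a bounded lattice homomorphism $h \colon L \to M$ dualizes to the Priestley morphism $f = h^{-1} \colon X_M \to X_L$, identified via the Stone isomorphisms with $f^{-1} \colon \clopup(X_L) \to \clopup(X_M)$. Since $h$ automatically preserves finite meets and bounds, $h$ is a frame homomorphism iff it preserves arbitrary joins; by the previous step this says $f^{-1}\bigl(\cl\bigcup_i U_i\bigr) = \cl f^{-1}\bigl(\bigcup_i U_i\bigr)$ for all families of clopen upsets $U_i$ of $X_L$, and since clopen upsets form a basis for the open upsets (by Priestley separation and compactness) the union $\bigcup_i U_i$ ranges over all open upsets $V$, so this is exactly the L-morphism condition $f^{-1}(\cl V) = \cl f^{-1}(V)$. (The inclusion $\supseteq$ is free from continuity; the remaining inclusion corresponds to $h(\bigvee a_i) \le \bigvee h(a_i)$.) Conversely, every L-morphism between L-spaces arises as $h^{-1}$ for such an $h$.

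Finally I would assemble the equivalence: the functors $\functor D$ and $\functor X$ restrict to $\LPries \to \Frm^{\mathrm{op}}$ and $\Frm \to \LPries^{\mathrm{op}}$ by the two steps above, functoriality is inherited from Priestley duality, and the natural isomorphisms $L \xrightarrow{\sim} \clopup(X_L)$ and $X \xrightarrow{\sim} X_{\clopup(X)}$ are the Priestley ones, which I would check are a frame isomorphism and an L-isomorphism respectively. The hard part will be the morphism step: one must be careful that a frame homomorphism is required to preserve only finite meets and arbitrary joins (not the Heyting implication), so that its dual is the bare L-morphism condition rather than the stronger Esakia/bounded-morphism condition, and one must verify that joins in $\clopup(X)$ are genuinely computed as closures of unions — this is where the L-space hypothesis does its work.
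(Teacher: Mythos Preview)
The paper does not supply a proof of this theorem; it is simply stated and attributed to Pultr and Sichler \cite{PultrSichler1988}, so there is nothing in the paper to compare your argument against. Your sketch is the standard route and is correct: restrict Priestley duality using the object characterization in \cref{esakia}, and for morphisms use that $\bigvee_i U_i = \cl\bigcup_i U_i$ in $\clopup(X)$ (which is \cref{prelim fact-join and meet}) together with the fact that every open upset of a Priestley space is a union of clopen upsets, so preservation of arbitrary joins by $h$ is exactly the L-morphism condition on $f=h^{-1}$.
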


Throughout we will use the following well-known facts (see, e.g.,
\cite{Priestley1984,Esakia2019,BezhanishviliBezhanishvili2008}).
\begin{lemma}
    Let $X$ be a Priestley space.
\begin{enumerate}[ref=\thelemma(\arabic*)]
    \item If $F \subseteq X$ is closed, then so are $\upset F$ and $\downset F$.
    \label[lemma]{prelim fact-up down closed}
    \item If $F \subseteq X$ is a closed upset, then it is an intersection of clopen upsets.
    \label[lemma]{prelim fact-intersections}
    \item If $F \subseteq X$ is closed, then for each $x \in F$ there are $m \in \min F$ and $n \in \max F$ such that
    $m \leq x \leq n$, where $\min F$ and $\max F$ denote the sets of minimal and maximal points of $F$, respectively.
    \label[lemma]{prelim fact-min max exist}
\end{enumerate}
If in addition $X$ is an L-space, then we have\textup{:}
\begin{enumerate}[ref=\thelemma(\arabic*),resume]
    \item If $F \subseteq X$ is closed, then so is $\max F$.\label[lemma]{prelim fact-max closed}
    \item If $U \subseteq X$ is clopen, then so is $\downset U$.\label[lemma]{prelim fact-clopen}
    \item For $\{ U_i\} \subseteq \clopup(X)$,\label[lemma]{prelim fact-join and meet}
		\[
		    \bigvee U_i = \cl \bigcup U_i \quad \mbox{and}\quad 
		    \bigwedge U_i = X \setminus \downset(X \setminus \intr \bigcap U_i). 
		\]
\end{enumerate}
\end{lemma}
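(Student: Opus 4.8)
The plan is to establish the six items in order, using only the Priestley separation axiom and compactness of $X$ for items (1)--(3), and the L-space structure recorded in \cref{esakia} for items (4)--(6); item (2) and item (3) build on (1), while items (5)--(6) use that $X$ is an extremally order-disconnected Esakia space. For (1): if $x \notin \upset F$, then $y \not\le x$ for every $y \in F$, so Priestley separation yields a clopen upset $U_y \ni y$ with $x \notin U_y$; as the $U_y$ cover the compact set $F$, a finite subcover gives a clopen upset $U \supseteq F$ with $x \notin U$, and since $U$ is an upset we get $\upset F \subseteq U$, so $X \setminus U$ is a clopen neighbourhood of $x$ disjoint from $\upset F$; the case of $\downset F$ is order-dual. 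For (2), the very same computation applies once one observes that a closed upset $F$ and a point $x \notin F$ satisfy $y \not\le x$ for all $y \in F$ (otherwise $x \in \upset y \subseteq F$); the clopen upsets $V \supseteq F$ with $x \notin V$ produced this way witness $F = \bigcap \{ V \in \clopup(X) \mid F \subseteq V \}$.

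For (3): since $X$ is Hausdorff, $\{x\}$ is closed, so $F \cap \downset x$ is a nonempty closed---hence compact---subset of $X$ by (1). I would apply Zorn's Lemma to $(F \cap \downset x, {\le})$: a chain $\mathcal C$ in this poset gives a family $\{ F \cap \downset c \mid c \in \mathcal C \}$ of closed sets with the finite intersection property, because a finite subchain has a least element $c$ and then $c$ lies in all the corresponding sets; by compactness the whole intersection is nonempty, and any element of it is a lower bound of $\mathcal C$ lying in $F \cap \downset x$. A minimal element $m$ of $F \cap \downset x$ is then minimal in $F$, and $m \le x$ by construction. The existence of $n \in \max F$ with $x \le n$ is order-dual, working with the closed set $F \cap \upset x$ and (1).

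For (5) and (6): $\downset U$ is closed by (1) applied to the closed set $U$, and it is open because $X$, being an Esakia space, has the property that down-sets of open sets are open; hence $\downset U$ is clopen. For (6), first recall that $\clopup(X)$ is a frame by Pultr--Sichler duality. Since $\bigcup U_i$ is an open upset, $\cl \bigcup U_i$ is an open upset by extremal order-disconnectedness and therefore a clopen upset; it contains every $U_i$ and is contained in every clopen upset containing all the $U_i$, so it is the join $\bigvee U_i$. For the meet, set $E := X \setminus \downset(X \setminus \intr \bigcap U_i)$; I would check that $E = \{ x \in X \mid \upset x \subseteq \intr \bigcap U_i \}$ is an open upset with $E \subseteq \bigcap U_i$, and that $E$ contains every open upset $W$ contained in $\bigcap U_i$ (since such a $W$ satisfies $W \subseteq \intr \bigcap U_i$, and $x \in W$ gives $\upset x \subseteq W \subseteq \intr \bigcap U_i$). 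Then $\cl E$ is the closure of an open upset, hence a clopen upset, and $\cl E \subseteq \cl \bigcap U_i = \bigcap U_i$; by the maximality property just noted, $\cl E \subseteq E$, so $E$ itself is clopen. Being the largest clopen upset contained in each $U_i$, $E$ equals $\bigwedge U_i$.

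Finally, (4), which I expect to be the main obstacle. I would first reduce to the case of a closed downset: $\max F = \max(\downset F)$ (a maximal point of $\downset F$ lies in, and is maximal in, $F$, and conversely every maximal point of $F$ is maximal in $\downset F$), and $\downset F$ is closed by (1); for a closed downset $F$, the set $X \setminus F$ is an open upset, so $\cl(X \setminus F)$ is clopen by extremal order-disconnectedness and hence $\intr F$ is clopen. The remaining step, that $\max F$ is closed, is exactly where extremal order-disconnectedness (rather than merely being an Esakia space) is indispensable: one would like to write $\max F = F \cap \bigcap \{ U \cup (X \setminus \downset(F \cap U)) \mid U \in \clopup(X) \}$ and then upgrade each $\downset(F \cap U)$ to a clopen set by approximating $F$ from above by clopens and applying (5) together with a compactness argument---but turning this into a complete proof is the delicate point, and I would otherwise invoke the standard treatment of extremally order-disconnected Esakia spaces in \cite{Esakia2019, BezhanishviliBezhanishvili2008}.
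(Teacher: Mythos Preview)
The paper does not prove this lemma at all: it is stated as a list of ``well-known facts'' with citations to \cite{Priestley1984,Esakia2019,BezhanishviliBezhanishvili2008} and no argument is given. Your proposal therefore goes well beyond what the paper does.

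Your proofs of (1), (2), (3), (5), and (6) are correct and standard. In (5) you use that down-sets of opens are open in an Esakia space; this is equivalent to the formulation in \cref{esakia} (closure of every upset is an upset), and while you do not spell out that equivalence, it is routine. Your argument for the meet in (6) is clean: showing that $E$ is the largest open upset contained in $\bigcap U_i$, then bootstrapping via $\cl E \subseteq E$ to obtain clopenness, is exactly the right move and uses extremal order-disconnectedness in the expected place.

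For (4) you correctly reduce to closed downsets and write down the identity $\max F = F \cap \bigcap_{U \in \clopup(X)} \bigl(U \cup (X \setminus \downset(F \cap U))\bigr)$, which is set-theoretically valid, but---as you yourself note---each term on the right is \emph{open} rather than closed, so the formula does not directly yield closedness of $\max F$. Your suggestion to approximate $F$ by clopens and use (5) with compactness indeed gives $\downset(F\cap U) = \bigcap\{\downset(V\cap U) : V \text{ clopen},\, F\subseteq V\}$, but this only confirms that $\downset(F\cap U)$ is closed, which you already knew from (1); it does not make the complementary term closed. So there is a genuine missing idea here, and your decision to fall back on the references is appropriate---and, in fact, matches exactly what the paper does for the entire lemma.
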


We next describe the Priestley spaces of spatial, algebraic, and arithmetic frames.

\begin{definition}
	For an L-space $X$,
	\begin{enumerate}
		\item $y \in X$ is a \emph{localic point} if $\downset y$ is clopen;
		\item the set $Y$ of localic points of $X$ is the \emph{localic part} of $X$;
		\item $X$ is an \emph{SL-space} (\emph{spatial} L-space) if $Y$ is dense in $X$;
		\item \SL is the full subcategory of \LPries consisting of SL-spaces.
	\end{enumerate}
\end{definition}

\begin{theorem}[see, e.g., {\cite[Cor.~4.10]{BezhanishviliMelzer2022b}}]
	\SFrm and \SL are dually equivalent.
\end{theorem}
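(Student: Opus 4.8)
The plan is to derive the equivalence by restricting the Pultr--Sichler duality $\Frm \simeq \LPries^{\mathrm{op}}$ to full subcategories. Since $\SFrm$ and $\SL$ are full subcategories of $\Frm$ and $\LPries$, it suffices to show that a frame $L$ is spatial iff its Priestley space $X$ is an SL-space (that is, the localic part $Y$ is dense in $X$): a dual equivalence restricts to a dual equivalence between full subcategories whose object classes correspond under it, and fullness then handles the morphisms. Throughout, recall that $X = X_L$ is the space of prime filters of $L$ and that the Stone map $\varphi \colon L \to \clopup(X)$ is a frame isomorphism, where joins in $\clopup(X)$ are computed as in \cref{prelim fact-join and meet}.

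The crux is the identification of the points of $L$ (i.e.\ its completely prime filters) with the localic points of $X$. For one direction, let $y \in Y$, so $\downset y$ is a clopen --- in particular open --- neighbourhood of $y$, and suppose $\bigvee_i a_i \in y$ while every $a_i \notin y$. Each $\varphi(a_i)$ is an upset not containing $y$, so $\downset y \cap \varphi(a_i) = \emptyset$ (if some $z \leq y$ lay in the upset $\varphi(a_i)$, then so would $y$), whence $\downset y \cap \bigcup_i \varphi(a_i) = \emptyset$; but $y \in \varphi(\bigvee_i a_i) = \cl \bigcup_i \varphi(a_i)$, contradicting that $\downset y$ is a neighbourhood of $y$. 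Thus $y$ is completely prime. For the converse, let $y$ be a completely prime filter and put $a = \bigvee\{b \in L \mid b \notin y\}$. Complete primeness forces $a \notin y$, so $a$ is the greatest element outside $y$; a direct check then gives $X \setminus \downset y = \varphi(a)$, which is clopen, so $\downset y$ is clopen and $y \in Y$.

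It remains to see that spatiality of $L$ corresponds to density of $Y$. Unwinding the definition and using the identification above, $L$ is spatial iff for all $a \neq b$ in $L$ there is $y \in Y$ with $y \in \varphi(a) \triangle \varphi(b)$. Since $\varphi$ is an order isomorphism, one checks this is equivalent to $(\varphi(a) \setminus \varphi(b)) \cap Y \neq \emptyset$ whenever $a \not\leq b$ --- the non-obvious implication being that $a \neq a \wedge b$ lets us separate $a$ from $a \wedge b$ by some $y \in Y$, which must then lie in $\varphi(a) \setminus \varphi(b)$ since it contains $a$. As $\{\varphi(a) \setminus \varphi(b) \mid a, b \in L\}$ is a basis for $X$ and $\varphi(a) \setminus \varphi(b) \neq \emptyset$ exactly when $a \not\leq b$, this says precisely that $Y$ is dense in $X$.

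I expect the main obstacle to be the object-level identification in the second paragraph, specifically its second half: recognizing $X \setminus \downset y$ as the clopen set $\varphi(a)$ is where completeness of $L$ is genuinely used, and it is the step that does not follow from purely distributive-lattice (Priestley) considerations.
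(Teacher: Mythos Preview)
The paper does not prove this theorem; it is stated as a known result with a citation to \cite[Cor.~4.10]{BezhanishviliMelzer2022b}. Your argument is correct and is essentially the standard route: restrict the Pultr--Sichler duality to full subcategories and verify on objects that $L$ is spatial iff the localic part $Y$ is dense in $X$. Both halves of your identification of completely prime filters with localic points are fine (in the second half, the equality $X\setminus\downset y=\varphi(a)$ with $a=\bigvee\{b\mid b\notin y\}$ is exactly right, and this is indeed where completeness of $L$ enters), and the density argument via the basic opens $\varphi(a)\setminus\varphi(b)$ is correct. The paper's \cref{topology Y} records the same identification of $Y$ with the space of points of $L$, so your approach is fully in line with how the paper uses this result.
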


\begin{remark} \label{topology Y}
	Let $X$ be the Priestley space of a frame $L$, and $Y$ the localic part of $X$. We topologize $Y$ by
	$\{U \cap Y \mid U \in \clopup(X)\}$. Then $Y$ is precisely the space of points of $L$ (see, e.g,
	\cite[Prop~5.1]{AvilaBezhanishviliMorandiZaldivar2020}).
\end{remark}

\begin{definition} 
	Let $X$ be an L-space and $Y$ its localic part.
	\begin{enumerate}
		\item A closed upset $F \subseteq X$ is a \emph{Scott upset} if $\min F \subseteq Y$.
		\item $\clopsup(X)$ is the collection of clopen Scott upsets of $X$.
    \end{enumerate}
\end{definition}

Scott upsets play an important role in Priestley spaces of frames as they correspond to Scott open filters on the frame
side:
\begin{theorem}[{\cite[Thm.~5.6]{BezhanishviliMelzer2022}}]
  \label{thm: scott upsets}
  Let $L$ be a frame and $X$ its L-space. The poset of Scott open filters of $L$ is dually isomorphic to the poset of
  Scott upsets of $X$.
\end{theorem}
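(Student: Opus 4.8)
The plan is to identify $L$ with the frame $\clopup(X)$ of clopen upsets of its $L$-space $X$, and to cut the statement into two layers: first the (standard) order-reversing bijection between \emph{all} filters of $\clopup(X)$ and \emph{all} closed upsets of $X$, and then the verification that it matches Scott open filters with Scott upsets. For the first layer, send a filter $F$ to $G_F := \bigcap F$ and a closed upset $G$ to $F_G := \{U \in \clopup(X) \mid G \subseteq U\}$. That $F_G$ is a filter is routine once one observes that the meet of two clopen upsets is their intersection, and $G_{F_G} = G$ is precisely \cref{prelim fact-intersections}. For $F_{G_F} = F$ one uses compactness of $X$: if $\bigcap F = G_F \subseteq U$ for a clopen upset $U$, then $\{V \setminus U \mid V \in F\}$ is a downward-directed family of closed sets with empty intersection, so $V \subseteq U$ for some $V \in F$, whence $U \in F$. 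In particular, for $U \in \clopup(X)$ one has $U \in F \iff G_F \subseteq U$, a fact used repeatedly below; and since $F \mapsto \bigcap F$ obviously reverses inclusion (with $G \mapsto F_G$ its order-reversing inverse), it remains only to show that $F$ is a Scott open filter iff $G_F$ is a Scott upset, i.e. $\min G_F \subseteq Y$.

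For the ``if'' direction, assume $\min G_F \subseteq Y$ and take a directed family $\{U_i\} \subseteq \clopup(X)$ with $\bigvee U_i \in F$, i.e. $G_F \subseteq \cl\bigcup U_i$ by \cref{prelim fact-join and meet}. Given $m \in \min G_F \subseteq Y$, the set $\downset m$ is clopen, hence an open neighbourhood of $m$, so it meets $\bigcup U_i$; choosing $z \le m$ with $z \in U_i$ and using that $U_i$ is an upset gives $m \in U_i$. Thus $\min G_F \subseteq \bigcup U_i$, and since $G_F = \upset \min G_F$ by \cref{prelim fact-min max exist} while $\bigcup U_i$ is an upset, we get $G_F \subseteq \bigcup U_i$. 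As $G_F$ is closed, hence compact, and $\{U_i\}$ is directed, $G_F \subseteq U_j$ for some $j$, so $U_j \in F$ and $F$ is Scott open.

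For the ``only if'' direction, assume $F$ is Scott open and fix $m \in \min G_F$; the goal is that $\downset m$ is clopen. Put $\mathcal U := \{U \in \clopup(X) \mid m \notin U\}$, which is directed (closed under finite unions), and note $\bigcup \mathcal U = X \setminus \downset m$ by Priestley separation. Since $\downset m$ is closed, $X \setminus \downset m$ is an open upset, so its closure $W := \cl(X \setminus \downset m)$ is again an open upset because $X$ is an $L$-space; hence $W$ is a clopen upset and $W = \bigvee \mathcal U$ by \cref{prelim fact-join and meet}. If $W \in F$, Scott openness would yield some $U \in \mathcal U \cap F$, but $U \in F$ forces $G_F \subseteq U$, contradicting $m \in G_F$, $m \notin U$. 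So $W \notin F$, hence $G_F \not\subseteq W$, and we may pick $p \in G_F \setminus W$. Then $p \in X \setminus W = \intr \downset m$, so $p \le m$, and minimality gives $p = m$; thus $m \notin W$. Since $W$ is an upset and $m \notin W$, no $z \le m$ lies in $W$, so $\downset m \subseteq X \setminus W = \intr \downset m$, i.e. $\downset m$ is open, hence clopen, and $m \in Y$.

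The main obstacle is the ``only if'' direction, and it is exactly where the $L$-space hypothesis is indispensable: one must recognise that $\cl(X \setminus \downset m)$ is itself an \emph{upset}, and it is this upward closure that upgrades ``$\downset m$ is a neighbourhood of $m$'' to ``$\downset m$ is open''. The degenerate case $\mathcal U = \{\bot\}$ (equivalently $\downset m = X$) is harmless, and the improper filter corresponds to $\emptyset$; everything else is bookkeeping on top of the filter/closed-upset duality and the elementary facts recalled in the excerpt.
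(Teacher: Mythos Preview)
The paper does not give its own proof of this theorem: it is quoted verbatim from \cite[Thm.~5.6]{BezhanishviliMelzer2022} and used as a black box. So there is no in-paper argument to compare against; I can only assess correctness.

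Your proof is correct. The first layer (filters of $\clopup(X)$ versus closed upsets of $X$) is standard and you handle it cleanly, including the compactness argument for $F_{G_F}=F$. In the ``if'' direction you essentially reprove, inline, the characterisation recorded in the paper as \cref{Scott upset lemma} (a Scott upset passes through closures of open upsets), which is fine. The ``only if'' direction is the substantive part, and your argument is sound: the key move---forming $W=\cl(X\setminus\downset m)=\bigvee\mathcal U$, using Scott openness of $F$ to force $W\notin F$, then picking $p\in G_F\setminus W$ and using minimality of $m$ to conclude $m\in\intr\downset m$---correctly exploits the L-space hypothesis exactly once (to know $W$ is an \emph{upset}, which is what promotes $m\in\intr\downset m$ to $\downset m\subseteq\intr\downset m$). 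The edge cases you flag are indeed harmless.

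One small stylistic remark: in the ``if'' direction you could shorten by invoking \cref{Scott upset lemma} directly (from $G_F\subseteq\cl\bigcup U_i$ conclude $G_F\subseteq\bigcup U_i$), but doing it by hand as you do is equally valid and keeps the argument self-contained.
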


\begin{definition}
  Let $X$ be an L-space and $Y$ its localic part.
  \begin{enumerate}
		\item The \emph{core} of $U \in \clopup(X)$ is $\core U = \bigcup\{V \in \clopsup(X) \mid V \subseteq U\}$.
		\item $X$ is an \emph{algebraic L-space} if $\core U$ is dense in $U$ for each $U \in \clopup(X)$.
		\item An L-morphism $f : X_1 \to X_2$ is \emph{coherent} if $f^{-1}(\core U) \subseteq \core f^{-1}(U)$ for each
		$U \in \clopup(X_2)$.
		\item \AlgL is the category of algebraic L-spaces and coherent L-morphisms between them.
	\end{enumerate}
\end{definition}

\begin{theorem}[{\cite[Thm.4.9]{BezhanishviliMelzer2023}}]
	\AlgFrm and \AlgL are dually equivalent.
\end{theorem}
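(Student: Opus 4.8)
The plan is to show that the Pultr--Sichler dual equivalence between $\Frm$ and $\LPries$ restricts to a dual equivalence between $\AlgFrm$ and $\AlgL$. Throughout, fix a frame $L$, write $X$ for its L-space, and let $\varphi\colon L\to\clopup(X)$ be the accompanying frame isomorphism; recall that every clopen upset of $X$ is $\varphi(a)$ for a unique $a\in L$ and that $\varphi(a)\subseteq\varphi(b)$ iff $a\le b$. The first thing I would establish is the identification
\[
  \varphi(c)\in\clopsup(X)\iff c\in K(L).
\]
To see this, note that a principal filter $\upset c$ is a Scott open filter exactly when $c$ is compact, and that the dual isomorphism of \cref{thm: scott upsets} restricts the standard correspondence sending a filter $F$ of $L$ to the closed upset $\{x\in X:F\subseteq x\}$; applied to $F=\upset c$ this produces $\{x\in X:c\in x\}=\varphi(c)$. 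Hence $\varphi(c)$ is a Scott upset iff $\upset c$ is Scott open iff $c\in K(L)$, and since every clopen Scott upset has the form $\varphi(c)$, the equivalence follows.

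Granting this, the core operation translates cleanly. For $U=\varphi(a)$ one gets
\[
  \core U=\bigcup\{V\in\clopsup(X):V\subseteq U\}=\bigcup\{\varphi(c):c\in K(L),\ c\le a\}.
\]
Since $\core U\subseteq U$ and $U$ is closed, $\core U$ is dense in $U$ iff $\cl(\core U)=U$; using the formula $\bigvee_i U_i=\cl\bigcup_i U_i$ in $\clopup(X)$ from \cref{prelim fact-join and meet} and the fact that $\varphi$ is a frame isomorphism, this amounts precisely to $a=\bigvee\{c\in K(L):c\le a\}$. As every clopen upset of $X$ is some $\varphi(a)$, I would conclude that $X$ is an algebraic L-space iff $L$ is an algebraic frame. (It follows, though it is not needed below, that an algebraic L-space is automatically an SL-space, since algebraic frames are spatial and $X$ is the L-space of $\clopup(X)$.)

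Next I would match the two morphism classes. Let $h\colon L\to M$ be a frame homomorphism with dual L-morphism $f\colon X_M\to X_L$ (the restriction of $h^{-1}$ to prime filters), so that $f^{-1}(\varphi_L(a))=\varphi_M(h(a))$ for all $a\in L$. If $h$ is coherent, then for every clopen upset $U=\varphi_L(a)$ the core formula and $h[K(L)]\subseteq K(M)$ give
\[
  f^{-1}(\core U)=\bigcup\{\varphi_M(h(c)):c\in K(L),\ c\le a\}\subseteq\core\varphi_M(h(a))=\core f^{-1}(U),
\]
so $f$ is coherent. For the converse, let $f$ be a coherent L-morphism and take $c\in K(L)$. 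Then $\varphi_L(c)\in\clopsup(X_L)$, hence $\core\varphi_L(c)=\varphi_L(c)$, and coherence yields
\[
  \varphi_M(h(c))=f^{-1}(\varphi_L(c))=f^{-1}(\core\varphi_L(c))\subseteq\core f^{-1}(\varphi_L(c))\subseteq\varphi_M(h(c)),
\]
so $\varphi_M(h(c))$ is the union of the clopen Scott upsets it contains. As each of those is an upset, every minimal point of the union is minimal in one of them and hence lies in the localic part of $X_M$; thus $\varphi_M(h(c))$ is itself a clopen Scott upset, whence $h(c)\in K(M)$ by the equivalence above. So $h$ is coherent. Combining this with the object correspondence, the Pultr--Sichler functors restrict to a dual equivalence $\AlgFrm\simeq\AlgL$.

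The step I expect to cost the most is the identification of compact elements with clopen Scott upsets --- one has to pin down exactly how the isomorphism of \cref{thm: scott upsets} acts on principal filters. After that the work is largely routine: the core/density computation, and the two-sided check that ``coherent'' is both preserved and reflected. The only mild subtlety there is that $\AlgFrm$ and $\AlgL$ are not full subcategories of $\Frm$ and $\LPries$, so one must verify that the Pultr--Sichler bijection of hom-sets carries coherent frame homomorphisms precisely onto coherent L-morphisms rather than simply inherit it.
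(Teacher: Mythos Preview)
The paper does not give its own proof of this theorem; it is quoted verbatim from \cite[Thm.~4.9]{BezhanishviliMelzer2023} and stated without argument. So there is nothing in the present paper to compare your proposal against.

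That said, your argument is correct and is essentially the natural one: restrict the Pultr--Sichler equivalence by checking objects and morphisms separately. Two minor observations. First, the identification $\varphi(c)\in\clopsup(X)\iff c\in K(L)$ that you single out as the costly step is already recorded in the paper as \cref{rem: 2.8} (which in turn cites \cite[Cor.~5.4]{BezhanishviliMelzer2022}), so you could simply invoke that rather than deriving it from \cref{thm: scott upsets}. Second, in your converse direction for morphisms, once you have $\core\varphi_M(h(c))=\varphi_M(h(c))$ you can conclude immediately that $\varphi_M(h(c))$ is a Scott upset by the first clause of \cref{rem: 2.8}; the minimal-point argument you give is correct but unnecessary.
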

Consequently, every algebraic L-space is an SL-space.

\begin{remark} \label{rem: 2.8}
	Let $X$ be an L-space and $U \in \clopup(X)$. Then $\core U = U$ iff $U$ is a Scott upset, which by
	\cite[Cor.~5.4]{BezhanishviliMelzer2022} is equivalent to $U$ being a compact element of $\clopup(X)$. Thus,
	$\clopsup(X) = K(\clopup(X))$.
\end{remark}

\begin{definition}
	\begin{enumerate}[ref=\thedefinition(\arabic*)]
		\item[]
		\item\label[definition]{def arithemitc} An \emph{arithmetic L-space} is an algebraic L-space $X$ such that
		\[
			\core U \cap \core V = \core (U \cap V)
		\]
		for all $U,V \in \clopup(X)$. Equivalently (see, e.g., \cite[Lem.~5.2]{BezhanishviliMelzer2023}),
		\[
			U,V \in \clopsup(X) \implies U \cap V \in \clopsup(X).
		\]
		\item \AriL is the full subcategory of \AlgL consisting of arithmetic L-spaces.
	\end{enumerate}
\end{definition}

\begin{theorem}[{\cite[Thm~5.5]{BezhanishviliMelzer2023}}]\label{thm: another duality}
	\AriFrm and \AriL are dually equivalent.
\end{theorem}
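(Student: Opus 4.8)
The plan is to derive the dual equivalence by restricting the dual equivalence of \AlgFrm and \AlgL from \cite[Thm.~4.9]{BezhanishviliMelzer2023} to the full subcategories \AriFrm of \AlgFrm and \AriL of \AlgL. Both subcategories are \emph{full}, and the functors realizing the \AlgFrm--\AlgL duality are restrictions of the Pultr--Sichler functors; hence, once we check that these functors send objects of \AriFrm to objects of \AriL and objects of \AriL to objects of \AriFrm, fullness, faithfulness, and the unit/counit natural isomorphisms are inherited for free. Thus the entire content is the claim: \emph{an algebraic frame $L$ is arithmetic if and only if its dual algebraic L-space $X$ is an arithmetic L-space.}

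To set up the dictionary, I would first recall that, under Pultr--Sichler duality, the frame dual to an L-space $X$ is $\clopup(X)$ (with frame operations computed as in \cref{prelim fact-join and meet}); in particular, since $\clopup(X)$ is closed under finite intersections, binary meets in this frame are just intersections. Restricting the frame isomorphism $L \cong \clopup(X)$ to compact elements yields a bounded lattice isomorphism $K(L) \cong K(\clopup(X))$, and by \cref{rem: 2.8} we have $K(\clopup(X)) = \clopsup(X)$. Therefore $K(L) \cong \clopsup(X)$ as bounded distributive lattices, with the binary meet of $K(L)$ carried to the intersection operation on $\clopsup(X)$.

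The equivalence is now a short computation: $L$ is arithmetic precisely when $K(L)$ is closed under binary meets, precisely when $\clopsup(X)$ is closed under finite intersections, precisely when $X$ is an arithmetic L-space (the last step being the second, equivalent, formulation in \cref{def arithemitc}). It follows that the functor from \AlgFrm to \AlgL restricts to a functor from \AriFrm to \AriL, and the functor from \AlgL to \AlgFrm restricts to a functor from \AriL to \AriFrm; moreover the images land exactly in these subcategories, since the dual of an arithmetic L-space is an arithmetic frame and conversely (again by the displayed chain of equivalences). As the two functors were mutually quasi-inverse on \AlgFrm and \AlgL, their restrictions are mutually quasi-inverse on \AriFrm and \AriL, which gives the asserted dual equivalence.

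I do not expect a genuine obstacle here: this is a routine ``restrict an equivalence to full subcategories'' argument, and the only point requiring a little care is that $K(L) \cong \clopsup(X)$ must be an isomorphism of \emph{lattices} (hence respecting binary meets), not merely of posets. This is immediate, since the compact elements form a sublattice of $L$ and of $\clopup(X)$ (the latter by \cref{rem: 2.8}), onto which the ambient frame isomorphism restricts. As a minor bookkeeping remark, note that neither ``arithmetic frame'' nor ``arithmetic L-space'' as defined above presupposes a unit, so this single statement covers both the unital and non-unital settings used later in the paper.
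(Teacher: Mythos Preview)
The paper does not supply a proof of this theorem; it is merely quoted from \cite[Thm.~5.5]{BezhanishviliMelzer2023}. Your argument---restricting the \AlgFrm/\AlgL duality to the full subcategories and checking on objects via the identification $K(L)\cong\clopsup(X)$---is correct and is exactly the standard route (and indeed the one taken in the cited reference).

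One small wording issue: in your final paragraph you justify that the bijection $K(L)\cong\clopsup(X)$ respects binary meets by saying ``the compact elements form a sublattice of $L$.'' That is precisely what is in question (it holds iff $L$ is arithmetic), so as stated the justification is circular. The fix is immediate and you essentially already have it: the ambient frame isomorphism $\varphi:L\to\clopup(X)$ sends $a\wedge b$ to $\varphi(a)\cap\varphi(b)$ for \emph{all} $a,b$, and it preserves and reflects compactness; hence $a\wedge b\in K(L)$ iff $\varphi(a)\cap\varphi(b)\in\clopsup(X)$, with no appeal to $K(L)$ being a sublattice.
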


\section{Priestley duals for nuclei and sublocales} \label{section 3}

In this section, we recall the relationship between nuclei and sublocales of a frame $L$, as well as their dual
characterization as nuclear subsets of the Priestley space of $L$. Moreover, we give a dual characterization of the
admissible filter of a nucleus on $L$, which yields an alternative proof of \cite[Lem.~3.4(ii)]{Johnstone1985}. Furthermore, we provide an alternate proof of Isbell's Density Theorem, which utilizes
Priestley duality.

\begin{definition}[see, e.g., {\cite[p.~48]{Johnstone1982}}]
	A \emph{nucleus} on a frame $L$ is a map $j : L \to L$ satisfying
	\begin{enumerate}
		\item $a \leq ja $
		\item $jja \leq ja$
		\item $ja \wedge jb = j(a \wedge b)$
	\end{enumerate}
	for all $a, b \in L$.
\end{definition}
Let $N(L)$ be the set of nuclei on $L$. We order $N(L)$ pointwise, i.e., $j \leq k$ iff $j(a) \leq k(a)$ for all
$a \in L$. With this order, it is well known that  $N(L)$ is a frame (see, e.g., \cite[p.~51]{Johnstone1982}).

\begin{definition}[see, e.g., {\cite[p.~26]{PicadoPultr2012}}] \label{def: sublocale}
	Let $L$ be a frame. We call $S \subseteq L$ a \emph{sublocale} of $L$ provided $S$ is closed under arbitrary meets and
	$a \to s \in S$ for all $a \in L$ and $s \in S$.
\end{definition}

Let $S(L)$ be the set of sublocales of $L$. We order $S(L)$ by inclusion. Then $S(L)$ is dually isomorphic to $N(L)$
(see, e.g., \cite[Sect.~III-5]{PicadoPultr2012}). The dual isomorphism associates with each nucleus $j$, the sublocale
$S_j \coloneqq j[L]\in S(L)$; and with each sublocale $S$, the nucleus $j_S$ given by
$j_S(a) = \bigwedge \{s \in S \mid a \leq s\}$.
Nuclei, and hence sublocales, dually correspond to nuclear subsets of L-spaces introduced in \cite{PultrSichler2000}
(see also \cite{BezhGhilardi2007,AvilaBezhanishviliMorandiZaldivar2020}):
\begin{definition}
	Let $X$ be an L-space.
	\begin{enumerate}
		\item A subset $N$ of $X$ is a \emph{nuclear subset} provided $N$ is closed and $\downset(U \cap N)$ is clopen for
		each clopen subset $U$ of $X$.
		\item Let $N(X)$ be the set of nuclear subsets of $X$ ordered by inclusion.
	\end{enumerate}
\end{definition}

\begin{theorem}[{\cite[Thm.~30]{BezhGhilardi2007}}]
	Let $L$ be a frame and $X$ its Priestley space. Then $N(L)$ is dually isomorphic to $N(X)$.
\end{theorem}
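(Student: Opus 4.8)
The plan is to construct explicitly a pair of mutually inverse order-reversing maps between $N(L)$ and $N(X)$; combined with the dual isomorphism $S(L)\cong N(L)$ recalled above, this also matches sublocales of $L$ with nuclear subsets of $X$, but I will phrase everything in terms of nuclei. By Pultr--Sichler duality we may identify $L$ with $\clopup(X)$, so a nucleus on $L$ is a map $j\colon\clopup(X)\to\clopup(X)$. For $j\in N(L)$ define
\[
  N_j \;=\; \bigcap_{U\in\clopup(X)}\bigl((X\setminus j(U))\cup U\bigr),
\]
and for a closed subset $N\subseteq X$ define $j_N\colon\clopup(X)\to\clopup(X)$ by $j_N(U)=X\setminus{\downset}\bigl((X\setminus U)\cap N\bigr)$.

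First I would check that these maps are well defined. If $N$ is a nuclear subset then $X\setminus U$ is clopen, so ${\downset}\bigl((X\setminus U)\cap N\bigr)$ is clopen, hence $j_N(U)$ is clopen; being the complement of a downset it is an upset, so $j_N(U)\in\clopup(X)$. The three nucleus axioms reduce to elementary properties of $\downset$: it distributes over unions (yielding $j_N(U)\cap j_N(V)=j_N(U\cap V)$), it absorbs iteration when the fixed set $N$ is held fixed (yielding idempotency), and $U\cap{\downset}\bigl((X\setminus U)\cap N\bigr)=\varnothing$ because $U$ is an upset (yielding $U\subseteq j_N(U)$). Monotonicity of $N\mapsto{\downset}\bigl((X\setminus U)\cap N\bigr)$ shows $N\mapsto j_N$ is order-reversing. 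Conversely, each set $(X\setminus j(U))\cup U$ is clopen because $j(U)\in\clopup(X)$, so $N_j$ is closed; that it is a \emph{nuclear} subset, i.e. that ${\downset}(W\cap N_j)$ is clopen for every clopen $W$, follows by writing $W$ as a finite union of basic clopen sets $U\setminus V$ with $U,V\in\clopup(X)$ and reducing, with the help of \cref{prelim fact-clopen,prelim fact-up down closed}, to the identity $j_{N_j}=j$ proved next. Order-reversal of $j\mapsto N_j$ is immediate from the formula.

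The heart of the argument is that the two maps are mutually inverse. For $N_{j_N}=N$: the inclusion $N\subseteq N_{j_N}$ is a direct unwinding of the definitions; for the converse, given $x\notin N$, the set ${\upset}x\cap N$ is closed and each of its points strictly dominates $x$, so Priestley separation and compactness yield $U\in\clopup(X)$ with ${\upset}x\cap N\subseteq U$ and $x\notin U$, whence $x\in j_N(U)$ while $x\notin U$, so $x\notin N_{j_N}$. For $j_{N_j}=j$: if $x\in(X\setminus U)\cap N_j$ then $x\notin j(U)$ (otherwise $x\in N_j$ would force $x\in U$), so $(X\setminus U)\cap N_j\subseteq X\setminus j(U)$, and as $X\setminus j(U)$ is a downset this gives ${\downset}\bigl((X\setminus U)\cap N_j\bigr)\subseteq X\setminus j(U)$, i.e. $j_{N_j}(U)\supseteq j(U)$. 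For the reverse inclusion, suppose $x\notin j(U)$; by Zorn's Lemma choose a point $z$ of $X$ maximal with respect to $x\leq z$ and $z\notin j(U)$, and claim $z\in N_j$. If not, there is $V\in\clopup(X)$ with $z\in j(V)$ but $z\notin V$; the filter generated by $z$ (viewed as a prime filter of $\clopup(X)$) together with $V$ still omits $j(U)$, for otherwise $W\cap V\leq j(U)$ for some $W\in z$, hence $V\to j(U)\in z$, hence $j(V)\wedge(V\to j(U))\in z$, and the nucleus axioms give
\[
  j(V)\wedge(V\to j(U))\ \leq\ j\bigl(V\wedge(V\to j(U))\bigr)\ \leq\ j(j(U))\ =\ j(U),
\]
so $j(U)\in z$, a contradiction. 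Extending this strictly larger filter to a prime filter still omitting $j(U)$ then contradicts the maximality of $z$. With $z\in N_j$, $x\leq z$ and $z\notin j(U)\supseteq U$, we conclude $x\in{\downset}\bigl((X\setminus U)\cap N_j\bigr)$, i.e. $x\notin j_{N_j}(U)$; hence $j_{N_j}(U)=j(U)$.

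Thus $j\mapsto N_j$ and $N\mapsto j_N$ are mutually inverse order-reversing bijections, so $N(L)$ and $N(X)$ are dually isomorphic. I expect the main obstacle to be the reverse inclusion in $j_{N_j}=j$: the Zorn's Lemma extension argument, whose engine is the Heyting inequality $V\wedge(V\to c)\leq c$ together with the nucleus axioms for $j$. The remaining work---well-definedness and the easy halves of the inverse identities---is routine, the only slightly delicate bookkeeping being the reduction of an arbitrary clopen set to basic clopen sets in the verification that $N_j$ is a nuclear subset.
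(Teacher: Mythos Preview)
The paper does not give its own proof of this theorem; it is quoted from \cite[Thm.~30]{BezhGhilardi2007}, and the remark that follows merely records the two maps. Your intersection formula for $N_j$ unwinds to the paper's $\{x\in X:j^{-1}[x]=x\}$, and your $j_N$ is exactly the paper's $U\mapsto X\setminus\downset(N\setminus U)$, so your approach matches what the paper indicates.

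Your argument is correct. The two inverse identities are the substance, and your Zorn--Heyting step for $j_{N_j}(U)\subseteq j(U)$ is sound. The one place that is thin is the verification that $N_j$ is nuclear for \emph{arbitrary} clopen $W$ (as in the paper's definition), not just for clopen downsets; the identity $j_{N_j}=j$ only directly gives the latter, and your ``reduce to basic clopens'' sketch does not say how to combine the upset and downset pieces. The missing observation is this: given $x\le z$ with $z\in W\cap N_j$, choose $U,V\in\clopup(X)$ with $z\in U\setminus V\subseteq W$; since $z\in N_j$ and $z\notin V$ we have $z\notin j(V)$, so $x$ lies in the clopen set $\downset\bigl(U\cap(X\setminus j(V))\bigr)$. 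Every $x'$ in this set lies below some $z'\in U$ with $z'\notin j(V)=j_{N_j}(V)$, hence $z'\in\downset(N_j\setminus V)$, hence $x'\le z'\le z''$ for some $z''\in N_j\setminus V$; as $U$ is an upset, $z''\in(U\setminus V)\cap N_j\subseteq W\cap N_j$. Thus $\downset(W\cap N_j)$ is open, and with this your proof is complete.
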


\begin{remark}\label{NL = NX remark}
	The dual isomorphism of the previous theorem is established as follows: with each $j \in N(L)$ we associate the
	nuclear subset 
	\[
	N_j \coloneqq \{x \in X \mid j^{-1}[x] = x\} \in N(X),
	\] 
	and with each $N \in N(X)$ the nucleus
	$j_N \in N(\clopup(X))$ given by ${j_N\,U = X \setminus \downset (N \setminus U)}$. Then
	$j\coloneqq\varphi^{-1} \circ j_N \circ \varphi$ is the corresponding nucleus on $L$. 
 \end{remark}

To simplify notation, we identify $N(L)$ with $N(\clopup(X))$. Thus, each $j\in N(L)$ is identified with $j_{N_j}$, and
hence, for $U \in \clopup(X)$, we have $j\,U = X \setminus \downset (N_j \setminus U)$.

\begin{corollary}[{\cite[p.~229]{PultrSichler2000}}]\label{S(L)=N(X)}
	Let $L$ be a frame and $X$ its Priestley space. Then $S(L)$ is isomorphic to $N(X)$.
\end{corollary}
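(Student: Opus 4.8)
The plan is to obtain the isomorphism by composing two dual isomorphisms that have already been recorded in the excerpt. First, recall from the discussion following \cref{def: sublocale} that $S(L)$ is dually isomorphic to $N(L)$, via the mutually inverse order-reversing maps $S \mapsto j_S$ and $j \mapsto S_j = j[L]$. Second, the theorem immediately preceding this corollary (\cite[Thm.~30]{BezhGhilardi2007}) gives a dual isomorphism between $N(L)$ and $N(X)$, implemented by the order-reversing maps $j \mapsto N_j$ and $N \mapsto j_N$ (under the identification of $N(L)$ with $N(\clopup(X))$ via the Stone map $\varphi$). The composite of two order-reversing bijections is an order-preserving bijection, so stitching these together produces a poset isomorphism $S(L) \to N(X)$.

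Concretely, I would define the map $S(L) \to N(X)$ by $S \mapsto N_{j_S}$, with proposed inverse $N \mapsto S_{j_N} = j_N[L]$, and then note that both composites are identities: this follows at once because $S \leftrightarrow j_S$ and $j \leftrightarrow N_j$ are each pairs of mutual inverses. Order preservation in both directions is likewise inherited from the order reversal of each constituent map. Nothing more is needed.

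I do not expect a genuine obstacle here, since the substantive content lies entirely in the two cited results and the corollary is a purely formal consequence. The only point worth stating with care — mainly so the statement is usable later in the paper, when specific sublocales such as $L_d$ are to be matched with their nuclear subsets of $X$ — is the explicit description of the isomorphism and its inverse, which is exactly what the composition above provides.
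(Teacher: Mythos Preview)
Your proposal is correct and matches the paper's intended argument: the corollary is stated without proof precisely because it is the formal composite of the dual isomorphism $S(L)\cong N(L)^{\mathrm{op}}$ recorded after \cref{def: sublocale} and the dual isomorphism $N(L)\cong N(X)^{\mathrm{op}}$ from the preceding theorem. Your explicit description of the maps $S\mapsto N_{j_S}$ and $N\mapsto S_{j_N}$ is exactly what is used tacitly later in the paper.
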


\begin{remark} \plabel{rem nuclear}
Let $L$ be a frame and $X$ its Priestley space. 
\begin{enumerate}
	\item\label[rem nuclear]{remark: Priestley space of sublocale}If $j \in N(L)$, then $N_j$ seen as a subspace of $X$ is
	order-homeomorphic to the Priestley space of the sublocale $S_j$ of $L$ (see, e.g., \cite[Lem.~25]{BezhGhilardi2007}).
	\item Localic points of $X$ also known as \emph{nuclear points} (see, e.g.,
	\cite[Def.~4.1]{AvilaBezhanishviliMorandiZaldivar2020}). This is because $y \in X$ is localic iff $\{y\}$ is a nuclear
	subset.
	\item\label[rem nuclear]{join nuclear}By \cite[Lem.~4.8]{AvilaBezhanishviliMorandiZaldivar2020}, the join in $N(X)$ is
	calculated by
	\[
		\bigvee N_i = \cl \bigcup N_i
	\]
	for $\{N_i\} \subseteq N(X)$.
	\item By \ref{join nuclear}, $\cl Z$ is a nuclear subset of $X$ for every subset $Z \subseteq Y$ of the localic part.
	\label[rem nuclear]{subsets are nuclear}
\end{enumerate}
\end{remark}

Let $j$ be a nucleus on $L$. An element
$a \in L$ is called \emph{$j$-dense} provided $ja = 1$.
Let $F_j$ be the set of all $j$-dense elements of $L$. It is well known and straightforward to verify that $F_j$ is a
filter of $L$. 

\begin{definition}[see, e.g., \cite{Simmons2006}]
	A filter $F$ of a frame $L$ is called \emph{admissible} if it is of the form $F = F_j$ for some $j \in N(L)$.
\end{definition}
\begin{remark}
  In \cite{MartinezMcGovern2009} admissible filters are called \emph{smooth}. In \cite[Thm.~25.5]{Wilson1994} it is
  shown that a filter is admissible iff it is \emph{free}, a concept that is now known as \emph{strongly exact} (see,
  e.g., \cite{MoshierPultrSuarez2020}).
\end{remark}

Let $X$ be the Priestley space of $L$. Recalling the well-known correspondence between filters of $L$
and closed upsets of $X$ (see \cite[p.~54]{Priestley1984} or \cite[Cor.~6.3]{BezhanishviliGabelaiaKurz2010}), let
${H_j \coloneqq \bigcap \varphi[F_j]}$ be the closed upset of $X$ corresponding to $F_j$.
To describe the relationship between $N_j$ and $H_j$,
we require the following lemma.

\begin{lemma}\plabel{lem: Nj}
	Let $L$ be a frame, $X$ its Priestley space, $j \in N(L)$, and $a,b \in L$.
	\begin{enumerate}
		\item\label[lem: Nj]{Nj-1} $\varphi(a) \cap N_j = \varphi(ja) \cap N_j$.
		\item\label[lem: Nj]{Nj-2} $x \in \varphi(ja)$ iff $\upset x \cap N_j \subseteq \varphi(a) \cap N_j$.
		\item\label[lem: Nj]{Nj-3} $ja \leq jb$ iff $\varphi(a) \cap N_j \subseteq \varphi(b) \cap N_j$.
		\item\label[lem: Nj]{Nj-4} $ja = jb$ iff $\varphi(a) \cap N_j = \varphi(b) \cap N_j$.
		\item\label[lem: Nj]{j=1 iff contains N_j} $a$ is $j$-dense iff $N_j \subseteq \varphi(a)$.
	\end{enumerate}
\end{lemma}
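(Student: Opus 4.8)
The plan is to reduce every part to the dual description of $j$ recalled just above, namely $j\,U = X \setminus \downset(N_j \setminus U)$ for $U \in \clopup(X)$, applied to $U = \varphi(a)$, so that $\varphi(ja) = X \setminus \downset(N_j \setminus \varphi(a))$. Since $N_j$ is closed and $\varphi(a)$ is clopen, $N_j \setminus \varphi(a)$ is closed, and $\downset(N_j \setminus \varphi(a))$ is closed by \cref{prelim fact-up down closed}; moreover for any subset $F \subseteq X$ one has $x \in \downset F$ iff $\upset x \cap F \neq \emptyset$. Statement (2) then falls out at once: $x \in \varphi(ja)$ iff $\upset x$ misses $N_j \setminus \varphi(a)$, i.e.\ iff $\upset x \cap N_j \subseteq \varphi(a)$, which (since $\upset x \cap N_j \subseteq N_j$) is the same as $\upset x \cap N_j \subseteq \varphi(a) \cap N_j$. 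For (1), the inclusion $\varphi(a) \cap N_j \subseteq \varphi(ja) \cap N_j$ is immediate from $a \le ja$; for the reverse I would invoke the description $N_j = \{x \in X \mid j^{-1}[x] = x\}$: if $x \in \varphi(ja) \cap N_j$ then $j(a) \in x$, hence $a \in j^{-1}[x] = x$, i.e.\ $x \in \varphi(a)$. (Alternatively, (1) is the restriction of (2) to $x \in N_j$, using that $\varphi(a)$ is an upset.)

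The remaining parts are then formal. For (3), first use (1) to rewrite the right-hand condition as $\varphi(ja) \cap N_j \subseteq \varphi(jb) \cap N_j$; the implication $ja \le jb \Rightarrow \varphi(a)\cap N_j \subseteq \varphi(b) \cap N_j$ is clear since $\varphi$ is an order isomorphism onto $\clopup(X)$. For the converse, assume $\varphi(a) \cap N_j \subseteq \varphi(b) \cap N_j$ and take $x \in \varphi(ja)$; by (2), $\upset x \cap N_j \subseteq \varphi(a) \cap N_j \subseteq \varphi(b) \cap N_j$, so (2) applied to $b$ yields $x \in \varphi(jb)$; thus $\varphi(ja) \subseteq \varphi(jb)$, i.e.\ $ja \le jb$. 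Part (4) is (3) together with its symmetric version. For (5), $a$ is $j$-dense iff $ja = 1 = j1$, so by (4) this holds iff $\varphi(a) \cap N_j = \varphi(1) \cap N_j = N_j$, that is, iff $N_j \subseteq \varphi(a)$.

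I do not expect a real obstacle here: the lemma is essentially bookkeeping around the identity $j\,U = X \setminus \downset(N_j \setminus U)$ and the fact that the order on $X$ is inclusion of prime filters. The single point needing a little care is to make sure $\downset(N_j \setminus \varphi(a))$ is closed — so that $j\,\varphi(a)$ is genuinely a clopen upset and the equivalence $x \in \downset F \iff \upset x \cap F \neq \emptyset$ can be used cleanly — which is precisely where the closedness of nuclear subsets and \cref{prelim fact-up down closed} enter.
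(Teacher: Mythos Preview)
Your proposal is correct and follows essentially the same route as the paper: both derive (2) directly from the formula $\varphi(ja) = X \setminus \downset(N_j \setminus \varphi(a))$, prove (1) via $N_j = \{x \mid j^{-1}[x] = x\}$, and then deduce (3)--(5) formally from (1), (2), and the fact that $\varphi$ is an order isomorphism. One small remark: the equivalence $x \in \downset F \iff \upset x \cap F \neq \varnothing$ holds for \emph{any} subset $F$, so you do not actually need closedness of $\downset(N_j \setminus \varphi(a))$ for the argument in (2); that closedness (really, clopenness via the nuclear property of $N_j$) only matters to ensure $j\,\varphi(a) \in \clopup(X)$, which is already built into the identification of $j$ with $j_{N_j}$.
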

\begin{proof}
	\ref{Nj-1} The left-to-right inclusion is clear because $a\le ja$. For the right-to-left inclusion, suppose
	$x \in \varphi(ja) \cap N_j$. Then $ja \in x$, so $a \in j^{-1}[x]$. But $j^{-1}[x] = x$ since $x \in N_j$.
	Consequently, $x \in \varphi(a) \cap N_j$.

	\ref{Nj-2} Let $x \in X$. We have
	\begin{align*}
		x \in \varphi(ja)
		&\iff x \in X \setminus \downset (N_j \setminus \varphi(a)) \\
		&\iff x \notin \downset (N_j \setminus \varphi(a))\\
		&\iff \upset x \cap (N_j \setminus \varphi(a)) = \varnothing\\
		&\iff \upset x \cap N_j \subseteq \varphi(a)\cap N_j.
	\end{align*}

	\ref{Nj-3} Suppose $ja \leq jb$. Then $\varphi(ja) \subseteq \varphi(jb)$. Consequently,
	\[
		\varphi(ja) \cap N_j \subseteq \varphi(jb) \cap N_j,
	\]
	and so $\varphi(a) \cap N_j \subseteq \varphi(b) \cap N_j$ by
	\ref{Nj-1}. Conversely, suppose $\varphi(a) \cap N_j \subseteq \varphi(b) \cap N_j$. It suffices to show that
	$\varphi(ja) \subseteq \varphi(jb)$. Let $x \in \varphi(ja)$. Then $\upset x \cap N_j \subseteq \varphi(a) \cap N_j$
	by \ref{Nj-2}. Therefore, $\upset x \cap N_j \subseteq \varphi(b) \cap N_j$ by assumption. Thus, $x \in \varphi(jb)$
	by \ref{Nj-2}.

	\ref{Nj-4} This follows from \ref{Nj-3}.

	\ref{j=1 iff contains N_j} Suppose $ja = 1$. Then $\varphi(ja) = X$. Therefore,  by \ref{Nj-1},
	\[
		N_j \cap \varphi(a) = N_j \cap \varphi(ja) = N_j.
	\]
	Thus, $N_j \subseteq \varphi(a)$. Conversely, suppose
	$N_j \subseteq \varphi(a)$. Then $\varphi(a) \cap N_j = N_j = \varphi(1) \cap N_j$. Therefore,  by \ref{Nj-4},
	$ja = j1 = 1$.
\end{proof}

\begin{theorem}\label{upset N_j = F_j}
	 Let $L$ be a frame, $X$ its Priestley space, and $j \in N(L)$. Then $H_j = \upset N_j$.
\end{theorem}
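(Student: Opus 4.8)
The plan is to establish the two inclusions $\upset N_j \subseteq H_j$ and $H_j \subseteq \upset N_j$ separately, using \cref{j=1 iff contains N_j} as the main bridge between $j$-density of an element and containment of $N_j$ in the corresponding clopen upset.

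First I would prove $\upset N_j \subseteq H_j$. Since $H_j = \bigcap \varphi[F_j]$ is an intersection of clopen upsets, it is itself an upset, so it suffices to show $N_j \subseteq H_j$. Fix $x \in N_j$. For every $j$-dense element $a$, \cref{j=1 iff contains N_j} gives $N_j \subseteq \varphi(a)$, so $x \in \varphi(a)$; letting $a$ range over $F_j$ yields $x \in \bigcap \varphi[F_j] = H_j$.

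For the reverse inclusion I would argue by contraposition. Suppose $x \notin \upset N_j$; this means $n \nleq x$ for every $n \in N_j$, so by Priestley separation each $n \in N_j$ lies in some clopen upset $U_n$ with $x \notin U_n$. As $N_j$ is closed and hence compact, finitely many of these, say $U_{n_1}, \dots, U_{n_k}$, already cover $N_j$; set $U \coloneqq U_{n_1} \cup \dots \cup U_{n_k} \in \clopup(X)$. Then $N_j \subseteq U$ while $x \notin U$. Writing $U = \varphi(a)$ for the corresponding $a \in L$ (recall $\varphi \colon L \to \clopup(X)$ is an isomorphism), the containment $N_j \subseteq \varphi(a)$ forces $a \in F_j$ by \cref{j=1 iff contains N_j}. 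Hence $\varphi(a)$ is one of the sets in $\varphi[F_j]$, so $H_j \subseteq \varphi(a) = U$; since $x \notin U$, we conclude $x \notin H_j$.

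The first inclusion is routine; the crux is the second, and the key step there is turning the mere set-theoretic disjointness of $\downset x$ and the closed set $N_j$ into a single clopen upset separating $N_j$ from $x$ — which is exactly where compactness of $N_j$ together with Priestley separation is used — and then recognizing that clopen upset as $\varphi$ of a $j$-dense element. I do not anticipate any further obstacle, since everything else is a direct application of \cref{j=1 iff contains N_j} and the definition of $H_j$.
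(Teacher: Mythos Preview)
Your proof is correct and follows essentially the same approach as the paper: both rely on \cref{j=1 iff contains N_j} together with compactness to match up the clopen upsets containing $N_j$ with those of the form $\varphi(a)$ for $a\in F_j$. The paper packages this as a single chain of equivalences (showing $H_j\subseteq\varphi(a)\iff\upset N_j\subseteq\varphi(a)$, using that both sides are closed upsets and hence intersections of clopen upsets), whereas you unpack it into two explicit inclusions, but the substance is the same.
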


\begin{proof}
	Since both $H_j$ and $\upset N_j$ are closed upsets, and hence intersections of clopen upsets (see
	\cref{prelim fact-intersections}), it is sufficient to show that $H_j \subseteq \varphi(a)$ iff
	$\upset N_j \subseteq \varphi(a)$ for each $a\in L$. We have
	\[
		H_j = \bigcap_{b \in F_j} \varphi(b) \subseteq \varphi(a) \iff a \in F_j \iff ja = 1 \iff N_j \subseteq \varphi(a)
		\iff \upset N_j \subseteq \varphi(a),
	\]
	where the first equivalence follows from compactness and the second to last equivalence from
	\cref{j=1 iff contains N_j}.
\end{proof}

The following result about Scott open filters was established in \cite[Lem.~3.4(ii)]{Johnstone1985} using transfinite
induction. Since then various alternative proofs have been obtained (see, e.g., \cite[Sec.~5.1]{JaklSuarez2025} and the references therein). We utilize Priestley duality to provide a simpler proof. 
\begin{corollary}
	Every Scott open filter of a frame is admissible.
\end{corollary}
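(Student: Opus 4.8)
The plan is to translate the statement into the language of Priestley spaces and exploit the fact, recorded in \cref{thm: scott upsets} and \cref{upset N_j = F_j}, that Scott open filters of $L$ correspond to Scott upsets of $X$, while admissible filters are exactly those of the form $H_j = \upset N_j$ for a nuclear subset $N_j$. So it suffices to show that every Scott upset $F$ of $X$ equals $\upset N$ for some nuclear subset $N$.

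First I would let $F \subseteq X$ be a Scott upset, so $F$ is a closed upset with $\min F \subseteq Y$, where $Y$ is the localic part of $X$. The obvious candidate for the nuclear subset is $N \coloneqq \cl(\min F)$. Since $\min F \subseteq Y$, \cref{subsets are nuclear} tells us immediately that $N$ is a nuclear subset of $X$. The work then splits into two verifications. Second, I would check that $\upset N = F$: the inclusion $\upset N \supseteq \upset \min F = F$ uses \cref{prelim fact-min max exist} (every point of the closed set $F$ lies above a minimal point), while for $\upset N \subseteq F$ one notes $\min F \subseteq F$, so $N = \cl(\min F) \subseteq F$ because $F$ is closed, and hence $\upset N \subseteq \upset F = F$ since $F$ is an upset. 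Third, by \cref{upset N_j = F_j} applied to the nucleus $j = j_N$, we get $H_j = \upset N_j = \upset N = F$, and then the filter $F_j$ corresponding to $F$ under the filter/closed-upset correspondence is admissible by definition; since $F_j$ is precisely the Scott open filter we started with (the correspondences in \cref{thm: scott upsets} and in the filter/closed-upset bijection are compatible, both sending a filter $G$ to $\bigcap \varphi[G]$), we are done.

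The one point that needs a little care — and the likely main obstacle — is confirming that $N = \cl(\min F)$ is genuinely a \emph{nuclear} subset and that $\upset N = F$ even though $\min F$ need not be closed; the closure could in principle pick up points outside $F$ or below some minimal point. This is handled by the observation above that $N \subseteq F$ (as $F$ is closed) together with the fact that in a Priestley space the closure of a subset of $F$ stays inside $F$, so no spurious points appear; and $\upset N \supseteq F$ follows purely from $\min F \subseteq N$ and \cref{prelim fact-min max exist}. Everything else is a direct appeal to the machinery already set up: \cref{subsets are nuclear} for nuclearity, \cref{upset N_j = F_j} to identify $\upset N$ with the closed upset $H_{j_N}$ of the admissible filter $F_{j_N}$, and the remark that $\clopsup(X)$-style Scott upsets correspond exactly to Scott open filters.
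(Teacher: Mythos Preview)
Your proposal is correct and follows essentially the same route as the paper: translate via \cref{thm: scott upsets} and \cref{upset N_j = F_j} to the task of writing a Scott upset $F$ as $\upset N$ for some nuclear $N$, then take $N$ to be the closure of a suitable subset of $Y$ contained in $F$. The only cosmetic difference is that the paper uses $N = \cl(F \cap Y)$ while you use $N = \cl(\min F)$; since $\min F \subseteq F \cap Y \subseteq F$ both choices work for the same reasons, and your verification of the two inclusions $\upset N \subseteq F$ and $F \subseteq \upset N$ is in fact more explicit than the paper's one-line chain of equalities.
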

\begin{proof}
  Let $L$ be a frame and $X$ its L-space. By \cref{thm: scott upsets}, Scott open filters correspond to Scott upsets;
  and by \cref{upset N_j = F_j}, admissible  filters correspond to closed upsets of the form $\upset N$ for some
  $N \in N(X)$. Therefore, it suffices to show that for each Scott upset $F$, there exists a nuclear subset
  $N \subseteq X$ such that $F = \upset N$. Let $N = \cl (F \cap Y)$. Then $N$ is nuclear by \cref{subsets are nuclear}.
  Moreover, since $F$ is a Scott upset, $\min F \subseteq Y$, and hence
  $F = \upset \min F = \upset \cl (F \cap Y) = \upset N$. 
\end{proof}

\begin{remark}
	Our proof that Scott open filters are admissible relies on \cref{prelim fact-min max exist}, which requires the Axiom of Choice. An alternative proof using only the Prime Ideal Theorem can be found in \cite[Rem.~5.8]{BMRS25}.
\end{remark}

One of the most studied nuclei is the nucleus of double-negation. For a frame $L$ and $a\in L$, recall that the
\emph{pseudocomplement} of $a$ is given by
\[
	a^* = \bigvee\{b \in L \mid b \wedge a = 0\}.
\]
The map $a \mapsto a^{**}$ is the \emph{double-negation nucleus}, and the corresponding sublocale
\[
	\mathfrak B(L) \coloneqq \{a \in L \mid a = a^{**}\}
\]
is the \emph{Booleanization} of $L$ (see, e.g., \cite[p.~246]{PicadoPultr2021}).

If $j$ is the double-negation nucleus, then $j$-dense elements are simply called \emph{dense} (see, e.g.,
\cite[p.~131]{RasiowaSikorski1963}). It is well known that the corresponding admissible filter dually corresponds to
$\max X$, and so we have:
\begin{proposition} \plabel{negneg theorem}
	Let $j$ be the double-negation nucleus on $L$.
	\begin{enumerate}
		\item $H_{j} = \max X$. \label[negneg theorem]{Fnegneg = max X}
		\item $N_j = \max X$. \label[negneg theorem]{Nnegneg=max X}
	\end{enumerate}
\end{proposition}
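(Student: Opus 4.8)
The plan is to prove \ref{Fnegneg = max X} first, by identifying the admissible filter $F_j$ of dense elements with the clopen upsets containing $\max X$, and then to deduce \ref{Nnegneg=max X} from it via \cref{upset N_j = F_j}.

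First I would record that for $a\in L$, being $j$-dense (i.e.\ $a^{**}=1$) is the same as being \emph{dense} (i.e.\ $a^*=0$): since $a^*\wedge a^{**}=0$ and $0^*=1$, we have $a^{**}=1\iff a^*=0$. Next, using the Stone isomorphism $\varphi\colon L\to\clopup(X)$, which is a bounded lattice isomorphism and hence preserves pseudocomplements, I would establish the chain
\[
a^*=0 \iff \varphi(a)^*=\varnothing \iff \downset\varphi(a)=X \iff \max X\subseteq\varphi(a).
\]
For the middle equivalence, note that by \cref{prelim fact-clopen} the set $X\setminus\downset\varphi(a)$ is a clopen upset, and it is the largest clopen upset disjoint from $\varphi(a)$, so it equals $\varphi(a)^*$; this is empty exactly when $\downset\varphi(a)=X$. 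For the last equivalence: if $\downset\varphi(a)=X$, then any $m\in\max X$ lies below some $u\in\varphi(a)$, forcing $m=u\in\varphi(a)$ by maximality; conversely, if $\max X\subseteq\varphi(a)$, then by \cref{prelim fact-min max exist} every $x\in X$ lies below some $m\in\max X\subseteq\varphi(a)$, so $x\in\downset\varphi(a)$.

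Consequently $\varphi[F_j]=\{U\in\clopup(X)\mid \max X\subseteq U\}$, whence
\[
H_j=\bigcap\varphi[F_j]=\bigcap\{U\in\clopup(X)\mid\max X\subseteq U\}.
\]
Since $X$ is an L-space, $\max X$ is closed by \cref{prelim fact-max closed}, and it is an upset since a maximal point has nothing strictly above it; hence by \cref{prelim fact-intersections} the displayed intersection is precisely $\max X$. This proves \ref{Fnegneg = max X}.

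Finally, for \ref{Nnegneg=max X} I apply \cref{upset N_j = F_j}, which gives $\upset N_j=H_j=\max X$. In particular $N_j\subseteq\upset N_j=\max X$, so every point of $N_j$ is maximal; conversely, given $m\in\max X=\upset N_j$, there is $x\in N_j$ with $x\le m$, and since $x\in N_j\subseteq\max X$ is maximal, $x=m$, so $m\in N_j$. Thus $N_j=\max X$. The whole argument is routine once the dictionary is in place; the only point needing a little care is the chain of equivalences characterizing density via $\max X$ — in particular the identity $\varphi(a)^*=X\setminus\downset\varphi(a)$ and the equivalence $\downset\varphi(a)=X\iff\max X\subseteq\varphi(a)$ — so I do not anticipate a genuine obstacle.
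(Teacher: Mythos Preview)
Your proposal is correct and follows essentially the same route as the paper: establish \ref{Fnegneg = max X} first and then deduce \ref{Nnegneg=max X} via \cref{upset N_j = F_j}, exactly as the paper does. The only difference is that for \ref{Fnegneg = max X} the paper simply cites a reference, whereas you supply the direct argument identifying dense elements with clopen upsets containing $\max X$; your details there are all sound.
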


\begin{proof}
	For \ref{Fnegneg = max X} see, e.g., \cite[Sec.~3]{Guram2001}. For \ref{Nnegneg=max X} observe that
	\ref{Fnegneg = max X} and \cref{upset N_j = F_j} yield $\max X = H_j = \upset N_j$. Consequently, $N_j = \max X$.
\end{proof}

The following definition is well known. For parts \ref{def: dense 1} and \ref{def: dense 2} see, e.g.,
\cite[p.~50]{Johnstone1982} and for part \ref{def: dense 3} see, e.g., \cite[p.~108]{BezhBezh2022}.

\begin{definition}
	Let $L$ be a frame and $X$ its Priestley space.
	\begin{enumerate}
		\item\label{def: dense 1}$j \in N(L)$ is \emph{dense} if $j0 = 0$.
		\item\label{def: dense 2}$S \in S(L)$ is \emph{dense} if $0 \in S$.
		\item\label{def: dense 3}$N \in N(X)$ is \emph{cofinal} if $\max X \subseteq N$.
	\end{enumerate}
\end{definition}

\begin{lemma} \label{lem: dense = cofinal}
	Let $L$ be a frame, $X$ its Priestley space, and $j \in N(L)$. The following are equivalent.
	\begin{enumerate}
		\item $j$ is dense.
		\item $S_j$ is dense.
		\item $N_j$ is cofinal.
	\end{enumerate}
\end{lemma}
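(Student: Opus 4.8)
The plan is to prove the three equivalences by chasing through the translations already set up in the excerpt, rather than computing directly with pseudocomplements.

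First I would establish the equivalence of (1) and (2). This is essentially immediate from the dual isomorphism between $N(L)$ and $S(L)$: we have $S_j = j[L]$, so $0 \in S_j$ iff $0 = j(a)$ for some $a$, which (since $j$ is order-preserving and $j(0) \le j(a)$, while $0 \le j(0)$) happens iff $j(0) = 0$. So (1) $\Leftrightarrow$ (2) is a one-line observation.

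Next I would prove (1) $\Leftrightarrow$ (3). The natural route is through \cref{Nnegneg=max X}, which identifies $N_{\neg\neg} = \max X$ for the double-negation nucleus, combined with the general principle that the order on $N(L)$ (and hence the reverse order on nuclear subsets, by the dual isomorphism of \cite[Thm.~30]{BezhGhilardi2007}) is respected. Concretely: $j$ is dense iff $j0 = 0$ iff $0 \in S_j$ iff $S_j \subseteq \mathfrak{B}(L)$ — wait, that inclusion goes the wrong way; rather, a dense nucleus is one lying below the double-negation nucleus $\neg\neg$ in $N(L)$, since $\neg\neg$ is the largest dense nucleus. Under the dual isomorphism $N(L) \cong N(X)$, this says $N_{\neg\neg} \subseteq N_j$, i.e. $\max X \subseteq N_j$ by \cref{Nnegneg=max X}, which is exactly the statement that $N_j$ is cofinal. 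So the key step is: $j \le \neg\neg$ in $N(L)$ iff $j$ is dense, and then invoke the (order-reversing) duality together with \cref{Nnegneg=max X}.

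The main obstacle — and the one place where a little care is needed rather than pure formalism — is verifying that $\neg\neg$ is indeed the greatest dense nucleus, so that "$j$ dense" literally means "$j \le \neg\neg$". This is classical (it is how the Booleanization is characterized), but I would want to either cite it or give the short argument: if $j0 = 0$ then for any $a$ one shows $j(a) \le a^{**}$ by noting $a^* \wedge j(a) \le j(a^*) \wedge j(a) = j(a^* \wedge a) = j(0) = 0$, hence $j(a) \le a^{**}$; conversely $\neg\neg$ itself is dense. Alternatively, one can bypass $\neg\neg$ entirely and argue directly on the Priestley side using \cref{j=1 iff contains N_j} and \cref{prelim fact-min max exist}: $j0 = 0$ iff $\varphi(0) \cap N_j = \varnothing \cap N_j$ gives no information, so instead use that $j0 = 0$ iff $0$ is not $j$-dense unless... — this direct route is messier, so I would prefer the $\neg\neg$ comparison. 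I expect the whole proof to be three or four lines once the characterization of $\neg\neg$ as the top dense nucleus is in hand.
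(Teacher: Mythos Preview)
Your argument is correct. The paper, however, does not give a proof at all: it simply records that (1)$\Leftrightarrow$(2) is obvious and cites \cite[Thms.~23(2) and 28(2)]{BezhGhilardi2007} for (1)$\Leftrightarrow$(3). Your route is therefore genuinely different, and more self-contained: you use the direct computation $a^* \wedge j(a) \le j(a^* \wedge a) = j(0) = 0$ to show that $j$ is dense iff $j \le \neg\neg$, and then invoke the order-reversing isomorphism $N(L) \cong N(X)$ together with \cref{Nnegneg=max X} (already established) to translate this into $\max X \subseteq N_j$.

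One structural observation worth flagging: your computation that $\neg\neg$ is the greatest dense nucleus is exactly the nucleus formulation of Isbell's Density Theorem. In the paper's logical flow, Isbell (\cref{max X least cofinal} and its corollary) is derived \emph{from} this lemma plus \cref{Nnegneg=max X}. If one adopts your proof of the lemma, then the subsequent derivation of Isbell becomes essentially a restatement rather than a consequence---you have already done the work on the frame side, and the Priestley reformulation in \cref{max X least cofinal} adds nothing new. This is not a flaw in your argument, but it does mean your approach collapses the section's intended narrative arc (Priestley methods yielding Isbell as a corollary) into a single step.
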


\begin{proof}
	The equivalence (1)$\Leftrightarrow$(2) is obvious, and (1)$\Leftrightarrow$(3) is proved in
	\cite[Thms.~23(2) and 28(2)]{BezhGhilardi2007}.
\end{proof}

\begin{theorem} \label{max X least cofinal}
	Let $X$ be an L-space. Then $\max X$ is the least cofinal nuclear subset.
\end{theorem}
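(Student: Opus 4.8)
The plan is to reduce the whole statement to the double‑negation nucleus. Write $L$ for the frame whose L‑space is $X$, and let $j$ be the double‑negation nucleus on $L$. By \cref{Nnegneg=max X} we have $N_j = \max X$. Since $N_j$ is, by construction, a nuclear subset of $X$ (it is the member of $N(X)$ associated with the nucleus $j$ under the dual isomorphism $N(L) \cong N(X)$), this already shows that $\max X \in N(X)$. This is the only substantive point in the argument.

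Next I would observe that $\max X$ is cofinal. By definition a nuclear subset $N$ is cofinal precisely when $\max X \subseteq N$, and this is trivially satisfied by $N = \max X$.

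Finally, for leastness, let $N$ be any cofinal nuclear subset of $X$. Then $\max X \subseteq N$ by the definition of cofinality. Thus $\max X$ is a cofinal nuclear subset that is contained in every cofinal nuclear subset, hence it is the least (indeed the minimum) such subset, as claimed.

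I do not expect any genuine obstacle: the content is entirely packaged in \cref{negneg theorem}, which identifies $\max X$ with the nuclear subset of the double‑negation nucleus. Were one to insist on a self‑contained argument bypassing that proposition, the delicate step would be checking directly that $\downset(U \cap \max X)$ is clopen for every clopen $U \subseteq X$; closedness of $\max X$ is immediate from \cref{prelim fact-max closed} (applied to the closed set $X$ itself), but the clopenness condition is not transparent without recognizing $\max X$ as $N_j$ for a concrete nucleus $j$.
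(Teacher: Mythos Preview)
Your proposal is correct and matches the paper's proof essentially verbatim: both invoke \cref{Nnegneg=max X} to see that $\max X$ is nuclear, and then observe that it is trivially the least nuclear subset containing $\max X$, i.e., the least cofinal one.
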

\begin{proof}
	By \cref{Nnegneg=max X}, $\max X$ is a nuclear subset of $X$, and clearly it is the least such containing $\max X$.
	Thus, it is the least cofinal nuclear subset of $X$.
\end{proof}

As a consequence, we obtain the following well-known result of Isbell (see, e.g., \cite[p.~40]{PicadoPultr2012}):

\begin{corollary}[Isbell's Density Theorem]
	For a frame $L$, the Booleanization $\mathfrak B(L)$ is the least dense sublocale of $L$.
\end{corollary}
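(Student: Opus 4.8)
The plan is to translate Isbell's Density Theorem into the dual language provided by the preceding results and then apply \cref{max X least cofinal}. Recall that under the dual isomorphism between $S(L)$ and $N(X)$ (\cref{S(L)=N(X)}), a sublocale $S$ corresponds to a nuclear subset $N$, and by \cref{lem: dense = cofinal} the sublocale $S$ is dense if and only if the corresponding nuclear subset is cofinal. Moreover, this isomorphism is an order isomorphism with respect to inclusion, so least elements on one side correspond to least elements on the other.

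First I would observe that $\mathfrak{B}(L)$ is exactly the sublocale $S_j$ associated with the double-negation nucleus $j$ (this is immediate from the definitions of $S_j = j[L]$ and of $\mathfrak{B}(L)$). By \cref{Nnegneg=max X}, the nuclear subset $N_j$ corresponding to the double-negation nucleus is $\max X$. Next I would invoke \cref{lem: dense = cofinal} to conclude that $\mathfrak{B}(L)$ is a dense sublocale, since $N_j = \max X$ is trivially cofinal. Finally, to see it is the \emph{least} dense sublocale: if $S$ is any dense sublocale with corresponding nuclear subset $N$, then $N$ is cofinal by \cref{lem: dense = cofinal}, so $\max X \subseteq N$ by \cref{max X least cofinal}; transporting this inclusion back across the order isomorphism $S(L) \cong N(X)$ gives $\mathfrak{B}(L) \subseteq S$.

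I do not expect any real obstacle here, as the statement is a straightforward corollary once the dictionary is in place; the only point requiring a line of care is confirming that the isomorphism $S(L) \cong N(X)$ is order-preserving (for inclusion on both sides) so that "least" is preserved — but this is exactly how the isomorphism is set up via the dual isomorphism $S(L) \cong N(L)$ (reversing order) composed with the dual isomorphism $N(L) \cong N(X)$ (reversing order again), yielding an order isomorphism. One could alternatively argue entirely on the frame side: $\mathfrak{B}(L) = S_j$ for the double-negation nucleus $j$, and $j$ is the largest dense nucleus (equivalently, $a \mapsto a^{**}$ dominates every dense nucleus), whence $S_j$ is the smallest dense sublocale; but the point of this section is to showcase the Priestley-duality route, so I would present the argument through \cref{max X least cofinal}.
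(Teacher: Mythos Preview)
Your proposal is correct and follows essentially the same route as the paper: translate dense sublocales to cofinal nuclear subsets via \cref{lem: dense = cofinal}, invoke \cref{max X least cofinal} to get $\max X \subseteq N$, and transport back through \cref{S(L)=N(X),Nnegneg=max X}. The paper's version is terser (it omits the explicit verification that $\mathfrak B(L)$ itself is dense and the discussion of why the order is preserved), but the argument is the same.
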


\begin{proof}
	Let $S_j \subseteq L$ be a dense sublocale. Therefore, $N_j$ is cofinal by \cref{lem: dense = cofinal}, and so
	$\max X \subseteq N_j$ by \cref{max X least cofinal}. Consequently, $\mathfrak B(L) \subseteq S_j$ by
	\cref{S(L)=N(X),Nnegneg=max X}.
\end{proof}

\section{Priestley duality for the \texorpdfstring{$d$}{d}-nucleus}\label{section 4}

In this section we describe Priestley duals of inductive nuclei, introduced and studied by Martinez and Zenk
\cite{MartinezZenk2003}. We use the Priestley duality tools from previous sections to study the most prominent inductive
nucleus, known as the $d$-nucleus. Among other things, we characterize the nuclear set $N_d$ corresponding to the
$d$-nucleus, and its localic part $Y_d$.

\begin{definition}[{\cite[Sec.~4]{MartinezZenk2003}}]
	A nucleus $j$ on an algebraic frame $L$ is \emph{inductive} if for all $a \in L$ we have
	\[
		ja = \bigvee \{jk \mid k \in K(L) \mbox{ and } k \leq a\}.
	\]
\end{definition}

Let $X$ be the Priestley space of $L$. As we pointed out after \cref{NL = NX remark}, we identify $N(L)$ with
$N(\clopup(X))$, so for each $j\in N(L)$ there is a unique $N \in N(X)$ such that
$j\,U = X \setminus \downset (N \setminus U)$ for each $U \in \clopup(X)$. 

\begin{definition} \label{def jcore}
	Let $X$ be an L-space. For $j \in N(\clopup(X))$ and $U \in \clopup(X)$, define the \emph{$j$-core of $U$} by
	\[
		{\core_j}U = \bigcup\{j\,V \mid V \in \clopsup(X) \mbox{ and } V \subseteq U\}.
	\]
\end{definition}

\begin{remark}
	Let $j$ be a nucleus on an arithmetic frame $L$ and let $X$ be the Priestley space of $L$. Then $X$ is an arithmetic
	L-space by \cref{thm: another duality}. Therefore, for all clopen upsets $U,V$ of $X$,
	\begin{align*}
		\core_j U \cap \core_j V &= \bigcup\{j\,U' \mid U' \in \clopsup(X),\ U' \subseteq U\} \\
		&\qquad\qquad\cap \bigcup\{j\,V' \mid V' \in \clopsup(X),\ V' \subseteq U\}  \\
		&= \bigcup\{j\,(U' \cap V') \mid U',V' \in \clopsup(X),\ U' \subseteq U,\, V' \subseteq V\} \\
		&=\bigcup\{j\,(W) \mid W \in \clopsup(X),\ W \subseteq U \cap V\} \\
		&=\core_j (U \cap V),
	\end{align*}
	where the second to last equality is a consequence of $X$ being an arithmetic L-space (see \cref{def arithemitc}).
\end{remark}

\begin{definition}
	Let $X$ be an L-space. We call $N \in N(X)$ \emph{inductive} if $\upset(F \cap N)$ is a Scott upset for each Scott
	upset $F$ of $X$.
\end{definition}

As the name suggests, a nuclear subset is inductive iff its corresponding nucleus is inductive. To prove this, we recall
the following:
\begin{lemma}\plabel{lem:key props of Scott upsets}
	Let $X$ be an L-space and $Y$ its localic part.
	\begin{enumerate}
		\item {\upshape({\cite[Lem.~5.1]{BezhanishviliMelzer2022}})}A closed upset $F$ of $X$ is a Scott upset iff
		$F \subseteq \cl U$ implies $F \subseteq U$ for all open upsets $U$ of $X$.
		\label[lem:key props of Scott upsets]{Scott upset lemma}
		\item {\upshape(\cite[Lem.~4.14]{BezhanishviliMelzer2022b})}Let $y \in Y$ and $U$ an open upset of $X$. Then
		$y \in U$ iff $y \in \cl U$.\label[lem:key props of Scott upsets]{y and closures}
	\end{enumerate}
\end{lemma}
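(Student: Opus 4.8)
The plan is to establish part (2) first, as it is the simpler claim and is used in part (1). In (2) the implication $y \in U \Rightarrow y \in \cl U$ is immediate since $U \subseteq \cl U$. For the converse, use that $y \in Y$ means $\downset y$ is clopen, hence a clopen neighbourhood of $y$; so $y \in \cl U$ forces $\downset y \cap U \neq \varnothing$, and any $z$ in this intersection satisfies $z \leq y$ and $z \in U$, whence $y \in U$ because $U$ is an upset.

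For part (1), recall that $F$ being a Scott upset means $\min F \subseteq Y$. For the forward implication, suppose $\min F \subseteq Y$ and $F \subseteq \cl U$ with $U$ an open upset. Since every $x \in F$ dominates some $m \in \min F$ (by \cref{prelim fact-min max exist}, as $F$ is closed) and $U$ is an upset, it suffices to show $\min F \subseteq U$. But each $m \in \min F$ lies in $Y$ and in $F \subseteq \cl U$, so $m \in U$ by part (2); hence $F \subseteq U$.

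For the converse of (1), I would work at the level of a single minimal point. Fix $m \in \min F$ and put $U \coloneqq X \setminus \downset m$; this is an open upset, since $\downset m$ is closed (\cref{prelim fact-up down closed}) and its complement is readily checked to be an upset. Because $m \notin U$ we have $F \not\subseteq U$, so the hypothesis yields $F \not\subseteq \cl U$; pick $x \in F \setminus \cl U$. Then $x$ has a clopen neighbourhood disjoint from $U$, i.e.\ contained in $\downset m$, so $x \leq m$, and minimality of $m$ gives $x = m$. Thus $m \notin \cl U$, so there is a clopen $W$ with $m \in W \subseteq \downset m$, and therefore $\downset m = \downset W$, which is clopen by \cref{prelim fact-clopen}. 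Hence $m \in Y$; as $m \in \min F$ was arbitrary, $F$ is a Scott upset.

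The one genuinely delicate step is the converse of (1): everything rests on producing a clopen set trapped between $m$ and $\downset m$, and this is precisely where minimality of $m$ and the L-space hypothesis — via \cref{prelim fact-clopen}, that down-sets of clopens are clopen — do the work; the remaining manipulations are routine for closed upsets in a Priestley space. (Both facts appear in the literature, (1) in \cite[Lem.~5.1]{BezhanishviliMelzer2022} and (2) in \cite[Lem.~4.14]{BezhanishviliMelzer2022b}, but the sketch above is self-contained.)
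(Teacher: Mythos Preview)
Your proof is correct. The paper itself does not prove this lemma at all: it simply quotes the two statements with citations to \cite[Lem.~5.1]{BezhanishviliMelzer2022} and \cite[Lem.~4.14]{BezhanishviliMelzer2022b} and moves on. So there is nothing to compare at the level of argument; you have supplied a self-contained proof where the paper defers to the literature. Your argument for (2) is the standard one, and your forward direction of (1) is exactly what one would expect (reduce to $\min F$ and invoke (2)). For the converse of (1), your trick of testing the hypothesis against $U = X \setminus \downset m$ to trap a clopen $W$ with $m \in W \subseteq \downset m$, and then observing $\downset m = \downset W$ is clopen via \cref{prelim fact-clopen}, is clean and uses only the ambient Priestley/L-space machinery already recorded in the preliminaries.
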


\begin{theorem} \plabel{inductive and core}
	Let $X$ be an algebraic L-space and $N \in N(X)$. The following are equivalent.
	\begin{enumerate}
		\item $N$ is inductive. \label[inductive and core]{inductive and core:1}
		\item $j_N\,U = \cl \core_{j_N}U$, for all $U \in \clopup(X)$. \label[inductive and core]{inductive and core:2}
		\item $j_N$ is inductive. \label[inductive and core]{inductive and core:3}
	\end{enumerate}
\end{theorem}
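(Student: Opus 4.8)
The plan is to prove the chain of implications $(1)\Rightarrow(2)\Rightarrow(3)\Rightarrow(1)$, keeping everything on the Priestley side and translating to the frame side only through the identification $j_N\,U = X\setminus\downset(N\setminus U)$ and the correspondences from \cref{section 3}.

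For $(1)\Rightarrow(2)$, fix $U \in \clopup(X)$. The inclusion $\core_{j_N}U \subseteq j_N\,U$ is routine: each $V \in \clopsup(X)$ with $V\subseteq U$ satisfies $j_N\,V \subseteq j_N\,U$ by monotonicity, and $j_N\,U$ is clopen hence closed, so $\cl\core_{j_N}U \subseteq j_N\,U$. For the reverse inclusion I would use that $X$ is an algebraic L-space, so $\core U$ is dense in $U$, i.e. $U = \cl\core U = \cl\bigcup\{V \in \clopsup(X) \mid V \subseteq U\}$. The key point is to show $j_N\,U \subseteq \cl\bigcup\{j_N\,V \mid V\in\clopsup(X),\ V\subseteq U\}$. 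Here is where inductivity of $N$ enters: I want to show that if $x \in j_N\,U$, then $x$ lies in the closure of the displayed union. Suppose not; then there is a clopen upset $W$ with $x \in W$ and $W$ missing every $j_N\,V$. Now I would like to produce a Scott upset $F$ witnessing failure — recall $j_N\,V = X\setminus\downset(N\setminus V)$, so $W \cap j_N\,V = \varnothing$ means $W \subseteq \downset(N\setminus V)$, i.e. every point of $W$ sits below a point of $N\setminus V$. Intersecting over the directed family of such $V$ (using arithmeticity so the $V$'s are closed under intersection and form a directed system cofinal below $U$) and using \cref{prelim fact-min max exist} and compactness, I would extract a single point $x' \geq x$ in $N$ that lies below no point forced into $U$, contradicting $x \in j_N\,U$ via \ref{Nj-2}. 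The cleaner route is probably: translate $(2)$ directly as "$j_N\,U = \bigvee\{j_N\,V \mid V\in\clopsup(X),\ V\subseteq U\}$ in $\clopup(X)$", use \cref{prelim fact-join and meet} to rewrite the right-hand join as $\cl$ of a union, and then invoke \cref{Scott upset lemma}: the closed upset $H_{j_N} = \upset N_{j_N}$ behaves well under the inductive hypothesis because $\upset(F\cap N)$ being Scott lets one pass closures past $\cap N$.

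For $(2)\Rightarrow(3)$: this is essentially a rewriting. Condition $(3)$, transported to $\clopup(X)$ via \cref{rem: 2.8} (which identifies $\clopsup(X)$ with $K(\clopup(X))$), says $j_N\,U = \bigvee\{j_N\,k \mid k\in K(\clopup(X)),\ k\leq U\}$. But $\bigvee$ in $\clopup(X)$ is $\cl\bigcup$ by \cref{prelim fact-join and meet}, and the compact elements below $U$ are exactly the clopen Scott upsets $V\subseteq U$, so the right-hand side is $\cl\core_{j_N}U$. Hence $(2)$ is literally $(3)$ after unwinding definitions; the only care needed is that $\bigvee$ of the $j_N\,V$ over $\clopsup$ equals $\cl\bigcup$, which follows because this is a join of clopen upsets. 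I would also remark that $j_N$ is the nucleus on $L$ corresponding to $j_{N_{j_N}}$ under the standing identification, so "inductive" on the frame side matches this formula.

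For $(3)\Rightarrow(1)$: assume $j_N$ is inductive and let $F$ be a Scott upset of $X$; I must show $\upset(F\cap N)$ is a Scott upset, i.e. (by definition) a closed upset with $\min\upset(F\cap N)\subseteq Y$. Closedness of $\upset(F\cap N)$: $F\cap N$ is closed (intersection of closeds) so $\upset(F\cap N)$ is closed by \cref{prelim fact-up down closed}. For the Scott condition I would use \cref{Scott upset lemma} in the form: a closed upset $G$ is Scott iff $G\subseteq\cl W \Rightarrow G\subseteq W$ for all open upsets $W$. So suppose $\upset(F\cap N)\subseteq\cl W$. Passing to the frame side, $F$ corresponds to a Scott open filter and $\cl W$ to a clopen/open upset; the containment $\upset(F\cap N)\subseteq\cl W$ should translate, via \cref{upset N_j = F_j} and \ref{Nj-2}, into an inequality of the form "$j_N$ of (the complement data of $W$) is dense relative to $F$", and inductivity of $j_N$ — which expresses $j_N$ as a directed join over compacts — together with $F$ being Scott (so $\min F\subseteq Y$, and points of $Y$ commute with closures of open upsets by \cref{y and closures}) lets me drop the closure and conclude $\upset(F\cap N)\subseteq W$.

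The main obstacle I anticipate is the bookkeeping in $(3)\Rightarrow(1)$: converting the purely order-topological statement "$\upset(F\cap N)$ is a Scott upset" into the algebraic statement "$j_N$ commutes with the directed joins witnessing inductivity when restricted along $F$", and back. The cleanest proof probably routes everything through \cref{Scott upset lemma} and the dictionary of \cref{section 3} rather than through explicit point-chasing with $\downset$; the arithmeticity of $X$ (directedness of $\clopsup(X)$ below any $U$) is what makes the directed-join manipulations legitimate, and \cref{y and closures} is the lemma that does the real work of moving a closure past a minimal (localic) point of a Scott upset.
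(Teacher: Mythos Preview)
Your treatment of $(2)\Leftrightarrow(3)$ is correct and matches the paper: it is a definitional unwinding via \cref{rem: 2.8} and \cref{prelim fact-join and meet}. The problems are in the other two implications.

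For $(1)\Rightarrow(2)$ there is a genuine gap. Your contradiction argument never actually invokes the hypothesis that $N$ is inductive: you produce a clopen $W$ meeting no $j_N V$, translate this to $W\subseteq\downset(N\setminus V)$ for each such $V$, and then try to ``extract a single point $x'\ge x$ in $N$'' by intersecting over $V$'s---but no Scott upset $F$ is ever identified to which the inductive hypothesis $\upset(F\cap N)$ is Scott could be applied. (You also invoke arithmeticity, which is not assumed here; only algebraicity is.) The paper's idea, which you are missing, is to reduce to localic points: since $X$ is an SL-space, it suffices to show $j_N(U)\cap Y\subseteq\core_{j_N}U$. For $y\in j_N(U)\cap Y$ one has $\upset y\cap N\subseteq U$ by \cref{Nj-2}, and now $\upset y$ \emph{is} a Scott upset, so inductivity gives that $\upset(\upset y\cap N)$ is Scott; this set sits inside $U=\cl\core U$, hence inside $\core U$ by \cref{Scott upset lemma}, and compactness then yields a single $V\in\clopsup(X)$ with $\upset y\cap N\subseteq V\subseteq U$, i.e.\ $y\in j_N V\subseteq\core_{j_N}U$.

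For $(3)\Rightarrow(1)$ you have identified the right tools (\cref{Scott upset lemma}, $\min F\subseteq Y$, \cref{y and closures}) but the argument is only a sketch of a sketch. The paper proves $(2)\Rightarrow(1)$ concretely: given $\upset(F\cap N)\subseteq\cl U$, set $U'=\cl U\in\clopup(X)$; for each $y\in\min F\subseteq Y$ one has $\upset y\cap N\subseteq U'$, so $y\in j_N U'=\cl\core_{j_N}U'$ by $(2)$, hence $y\in\core_{j_N}U'$ by \cref{y and closures}, giving $V_y\in\clopsup(X)$ with $y\in j_N V_y$ and $V_y\subseteq U'$. Compactness of $F$ yields finitely many $V_y$'s whose union $V$ is still in $\clopsup(X)$; then $V\subseteq U'=\cl U$ forces $V\subseteq U$ by \cref{Scott upset lemma}, and $F\cap N\subseteq j_N(V)\cap N=V\cap N$ gives $\upset(F\cap N)\subseteq U$. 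Your outline gestures at this but does not supply the compactness step or the passage from $V\subseteq\cl U$ to $V\subseteq U$.
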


\begin{proof}
	\ref{inductive and core:1}$\Rightarrow$\ref{inductive and core:2}
	Let $U \in \clopup(X)$. Clearly $\cl \core_{j_N}U \subseteq j_N\,U$. For the other inclusion, since $Y$ is dense in
	$X$ (every algebraic L-space is an SL-space), it is sufficient to show that $j_N(U) \cap Y \subseteq \core_{j_N} U$. Suppose
	$y \in j_N(U) \cap Y$. Then $\upset y \cap N \subseteq U$ by \cref{Nj-2}. Since $\upset y$ is a Scott upset and $N$ is
	inductive, $\upset (\upset y \cap N)$ is a Scott upset. But 
	\[
		\upset (\upset y \cap N) \subseteq U = \cl \core U,
	\]
	and
	hence $\upset(\upset y \cap N) \subseteq \core U$ by \cref{Scott upset lemma}. Therefore, since finite unions of
	clopen Scott upsets are clopen Scott upsets, by compactness there exists $V \in \clopsup(X)$ such that
	$\upset y \cap N \subseteq V$ and $V \subseteq U$. Thus, $y \in j_N\,V$ by \cref{Nj-2}, and so $y \in \core_{j_N}U$,
	proving that $j_N(U) \cap Y \subseteq \core_{j_N} U$.
	
	\ref{inductive and core:2}$\Rightarrow$\ref{inductive and core:1}
	Let $F$ be a Scott upset and $\upset(F \cap N) \subseteq \cl U$ for some open upset $U$. Since $X$ is an L-space,
	$U' = \cl U \in \clopup(X)$. Let $y \in \min F$. Then 
	\[
		\upset y \cap N \subseteq \upset (F \cap N) \subseteq U',
	\]
	so
	$y \in j_N\,U'$ by \cref{Nj-2}. Thus, $y \in \cl \core_{j_N}U'$ by \ref{inductive and core:2}, and hence
	${y \in \core_{j_N}U'}$ by \cref{y and closures}. Therefore, there exists $V_y \in \clopsup(X)$ such that $y \in j_N\,V_y$ and
	${V_y \subseteq U'}$. We have $F = \upset \min F \subseteq \bigcup \{ j_N\,V_y \mid y \in \min F \}$, so by compactness
	there are $V_1, \dots, V_n \in \clopsup(X)$ such that $F \subseteq j_N\,V_1 \cup \dots \cup j_N\,V_n$. Let
	$V = V_1 \cup \dots \cup V_n$. Then $V \in \clopsup(X)$. Furthermore, $F \subseteq j_N V$ since $j_N$ is
	order-preserving, and clearly $V \subseteq U'$. The latter together with \cref{Scott upset lemma} yields that
	$V \subseteq U$. Since $F \subseteq j_N\,V$, we have $F \cap N \subseteq j_N(V) \cap N = V \cap N$ by \cref{Nj-1}.
	Consequently, $\upset (F \cap N) \subseteq U$, and hence $\upset (F \cap N)$ is a Scott upset by
	\cref{Scott upset lemma}.

	\ref{inductive and core:2}$\Leftrightarrow$\ref{inductive and core:3}
	Observe that
	\[
		j_N \mbox{ is inductive} \iff j_N\,U = \bigvee \{j_N\,V \mid V \in K(\clopup(X)) \mbox{ and } V \leq U\}.
	\]
	But $K(\clopup(X)) = \clopsup(X)$ (see \cref{rem: 2.8}) and $\bigvee \mathcal U = \cl \bigcup \mathcal U$ for
	$\mathcal U \subseteq \clopup(X)$ (see \cref{prelim fact-join and meet}). Therefore,
	\[
		\bigvee \{j_N\,V \mid V \in K(\clopup(X)) \mbox{ and } V \leq U\} = \cl \core_{j_N} U.
	\]
	Consequently, $j_N$ is inductive iff $U = \cl \core_{j_N}U$ for all $U \in \clopup(X)$.
\end{proof}	

A prominent example of an inductive nucleus is the so-called $d$-nucleus, introduced by Martinez and Zenk
\cite[Sec.~5]{MartinezZenk2003} as a frame-theoretic tool to study $d$-ideals of Riesz spaces (see
\cite{HuijsmansPagter1980} and \cite[Rem.~5.6]{MartinezZenk2003}).
		
\begin{definition} \label{def: d-elements}
	Let $L$ be an algebraic frame.
	\begin{enumerate}
		\item Define $d : L \to L$  by $da = \bigvee \{k^{**} \mid k \in K(L) \mbox{ and } k \leq a\}$ for all $a \in L$.
		\item We call $a \in L$ a \emph{$d$-element} if $da = a$.
	\end{enumerate}
\end{definition}

We write $L_d$ for the fixpoints of $d$.

\begin{theorem}[{\cite[Sec.~5]{MartinezZenk2003}}]\plabel{martinez}
	Let $L$ be an algebraic frame.
	\begin{enumerate}
		\item $d$ is a closure operator on $L$.
	\end{enumerate}
	If in addition $L$ is arithmetic, then
	\begin{enumerate}[resume]
		\item $d$ is an inductive dense nucleus on $L$.\label[martinez]{d inductive dense}
		\item $L_d$ is a sublocale of $L$ and is an arithmetic frame.\label[martinez]{Ld ari}
	\end{enumerate}
\end{theorem}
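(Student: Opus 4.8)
The plan is to settle part~(1) and the nucleus/density clauses of~(2) by elementary computations with compact elements and pseudocomplements, to get inductivity essentially for free from the definition of $d$, and to spend the real effort on the arithmeticity assertion in~(3). In parallel I would record the Priestley-dual picture of $d$, since that is what the later sections rely on: writing $\mathfrak b$ for the double-negation nucleus on $L$, using \cref{Nnegneg=max X} (so that $\mathfrak b\,U = X \setminus \downset(\max X \setminus U)$ on $\clopup(X)$), the identity $\varphi(k^{**}) = \mathfrak b\,\varphi(k)$, and $K(L) = \clopsup(X)$ (\cref{rem: 2.8}) together with \cref{prelim fact-join and meet}, one obtains, after identifying $L$ with $\clopup(X)$,
\[
	d\,U = \cl \bigcup\{\mathfrak b\,V \mid V \in \clopsup(X),\ V \subseteq U\} = \cl\,\core_{\mathfrak b}U \qquad (U \in \clopup(X)),
\]
and in particular $d\,V = \mathfrak b\,V = V^{**}$ for every $V \in \clopsup(X)$, whence $\core_d = \core_{\mathfrak b}$.

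\textbf{Part (1).} Here $L$ is merely algebraic. Monotonicity of $d$ is immediate, and $a \le da$ follows from algebraicity ($a = \bigvee\{k \in K(L) \mid k \le a\}$) together with $k \le k^{**}$. For idempotency I would show $d(da) \le da$: a compact $k \le da$ satisfies $k \le k_1^{**} \vee \dots \vee k_n^{**}$ with $k_i \in K(L)$ and $k_i \le a$, hence $k \le c^{**}$ for $c = k_1 \vee \dots \vee k_n \in K(L)$, so $k^{**} \le c^{****} = c^{**} \le da$; joining over such $k$ gives $d(da) \le da$. (Dually, one checks $\core_{\mathfrak b}(d\,U) \subseteq \cl\,\core_{\mathfrak b}U$ using \cref{Scott upset lemma} and compactness of clopen Scott upsets.)

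\textbf{Part (2).} Now $L$ is arithmetic. Density is $d0 = 0^{**} = 0$. For the nucleus identity it suffices to check $da \wedge db \le d(a \wedge b)$; distributing the defining joins and using the standard Heyting identity $k^{**} \wedge l^{**} = (k \wedge l)^{**}$ together with $k \wedge l \in K(L)$ (arithmeticity) and $k \wedge l \le a \wedge b$, one gets $da \wedge db = \bigvee\{(k \wedge l)^{**} \mid k,l \in K(L),\ k \le a,\ l \le b\} \le d(a \wedge b)$. (On the dual side this is $\mathfrak b\,V \cap \mathfrak b\,W = \mathfrak b(V \cap W)$ plus closure of $\clopsup(X)$ under binary intersections.) Inductivity is then immediate: since $k$ occurs among the compacts below itself, $dk = k^{**}$, hence $da = \bigvee\{k^{**} \mid k \in K(L),\ k \le a\} = \bigvee\{dk \mid k \in K(L),\ k \le a\}$ is exactly the inductivity condition; alternatively, $d\,U = \cl\,\core_d U$ from the display gives inductivity via \cref{inductive and core:3}.

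\textbf{Part (3).} That $L_d = d[L]$ is a sublocale, hence a frame, is the general nucleus/sublocale correspondence recalled after \cref{def: sublocale}. For arithmeticity I would identify $K(L_d)$ with $\{k^{**} \mid k \in K(L)\} = d[K(L)]$: each $k^{**}$ is a $d$-element ($d(k^{**}) \le k^{****} = k^{**}$); it is compact in $L_d$ because $k^{**} \le \bigvee^{L_d} S = d(\bigvee^L S)$ forces $k$, and hence $dk = k^{**}$, below $d$ of a finite join from $S$; and every $a \in L_d$ equals $\bigvee^{L_d}\{k^{**} \mid k \in K(L),\ k \le a\}$ since $a = da$ — so $L_d$ is algebraic. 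Closure of $K(L_d)$ under binary meets is once more $k^{**} \wedge l^{**} = (k \wedge l)^{**}$ with $k \wedge l \in K(L)$, meets in $L_d$ being computed in $L$. (Alternatively, one can read arithmeticity of $L_d$ off the identification of $N_d$ with an arithmetic L-space obtained later in the section, via \cref{remark: Priestley space of sublocale}.) The step I expect to be the real obstacle is exactly this last compactness bookkeeping — verifying $k^{**}$ is compact in $L_d$ and that $K(L_d)$ is precisely $d[K(L)]$ — while everything preceding it is a short transport along the dictionary above.
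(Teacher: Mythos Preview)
Your argument is correct, but there is nothing to compare it to: in the paper this theorem is not proved at all. It is stated with attribution to Martinez and Zenk \cite[Sec.~5]{MartinezZenk2003} and then used as a black box. So you have supplied a (correct) proof where the paper simply imports the result.

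Two minor points on your write-up. First, in part~(3) you assert that $K(L_d)=\{k^{**}\mid k\in K(L)\}$ but only verify one inclusion explicitly; the reverse inclusion is needed for your closure-under-binary-meets argument, and it does follow from what you wrote (a compact $c\in L_d$ equals $\bigvee^{L_d}\{k^{**}:k\in K(L),\,k\le c\}$, hence $c=(k_1\vee\dots\vee k_n)^{**}$ for finitely many $k_i$), but you should say so. Second, your compactness argument for $k^{**}$ in $L_d$ is compressed: from $k\le d(\bigvee^L S)$ you need two uses of compactness in $L$ (first to bound $k$ by finitely many $m_i^{**}$ with $m_i\le\bigvee S$, then to bound each $m_i$ by a finite subjoin of $S$) before concluding $k^{**}=dk\le d(\bigvee T)=\bigvee^{L_d}T$. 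The idea is right, but as written the phrase ``forces $k$\dots below $d$ of a finite join from $S$'' hides this two-step descent.
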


We next describe the nuclear subset $N_d$ of the Priestley space of $L$ corresponding to the sublocale $L_d$. As in
\cref{remark: Priestley space of sublocale}, for each nucleus $j$ on a frame $L$, we view the corresponding nuclear
subset $N_j$ as the Priestley space of the sublocale $L_j$.

\begin{proposition}\plabel{lem: Nd}
	Let $L$ be a frame, $X$ its Priestley space, and $Y$ the localic part of $X$.
	\begin{enumerate}
		\item\label[lem: Nd]{Y_j = Y cap N_j} If $j \in N(L)$, then $N_j \cap Y$ is the localic part of $N_j$.
	\end{enumerate}
	If in addition $L$ is an arithmetic frame, then
	\begin{enumerate}[resume]
		\item\label[lem: Nd]{Nd cofinal inductive}$N_d$ is a cofinal inductive nuclear subset of $X$.
    \item\label[lem: Nd]{Xd arithmetic}$N_d$ is an arithmetic L-space.
		\item\label[lem: Nd]{Xd = cl rmax Y}$N_d = \cl(N_d \cap Y)$.
	\end{enumerate}
\end{proposition}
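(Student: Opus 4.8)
The plan is to deduce (2)--(4) from the Martinez--Zenk results (\cref{d inductive dense,Ld ari}) together with the dual dictionary of \cref{section 3,section 4}, and to prove (1) by a direct analysis of the closed subspace $N_j$ of $X$.

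For (1): since $N_j$ is order-homeomorphic to the Priestley space of $S_j$ (\cref{remark: Priestley space of sublocale}), it is an L-space, and its localic part consists of those $y \in N_j$ for which $\downset y \cap N_j$ is clopen in $N_j$. If $y \in N_j \cap Y$, then $\downset y$ is clopen in $X$, so $\downset y \cap N_j$ is clopen in $N_j$; thus $N_j \cap Y$ lies in the localic part of $N_j$. Conversely, suppose $y \in N_j$ and $\downset y \cap N_j$ is clopen in $N_j$. Then $N_j \setminus (\downset y \cap N_j)$ is a clopen upset of $N_j$, and a routine compactness argument (using \cref{prelim fact-intersections}) shows that every clopen upset of a closed subspace of a Priestley space extends to a clopen upset of the ambient space; hence $N_j \setminus (\downset y \cap N_j) = V \cap N_j$ for some $V \in \clopup(X)$, i.e.\ $\downset y \cap N_j = (X \setminus V) \cap N_j$. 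Since $X \setminus V$ is clopen and $N_j$ is a nuclear subset, $\downset\bigl((X \setminus V) \cap N_j\bigr)$ is clopen in $X$; and since $(X \setminus V) \cap N_j = \downset y \cap N_j$ contains $y$ and is contained in $\downset y$, we get $\downset\bigl((X \setminus V) \cap N_j\bigr) = \downset y$. Therefore $\downset y$ is clopen, i.e.\ $y \in Y$, which proves (1).

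Part (2) is then immediate from the dictionary: by \cref{d inductive dense}, $d$ is an inductive dense nucleus on $L$, so $N_d$ is a nuclear subset of $X$; it is cofinal because $d$ is dense (\cref{lem: dense = cofinal}) and inductive because $d$ is inductive (\cref{inductive and core}, the implication (3)$\Rightarrow$(1)). For (3), $L_d$ is an arithmetic frame by \cref{Ld ari} and $N_d$ is (a homeomorphic copy of) its Priestley space by \cref{remark: Priestley space of sublocale}, so $N_d$ is an arithmetic L-space by the duality of \cref{thm: another duality}. For (4), an arithmetic L-space is in particular algebraic, hence an SL-space, so its localic part is dense in it; by (1) this localic part is $N_d \cap Y$, so $\cl_{N_d}(N_d \cap Y) = N_d$. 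As $N_d$ is closed in $X$, $\cl(N_d \cap Y) \subseteq \cl N_d = N_d$ and $\cl(N_d \cap Y) = \cl(N_d \cap Y) \cap N_d = \cl_{N_d}(N_d \cap Y)$, whence $\cl(N_d \cap Y) = N_d$.

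The only step that is not pure bookkeeping is the converse inclusion in (1): a priori, $\downset y \cap N_j$ being clopen in the subspace $N_j$ is much weaker than $\downset y$ being clopen in $X$. The point is to first upgrade the clopen upset $N_j \setminus (\downset y \cap N_j)$ to a clopen upset of $X$, and then to use that $N_j$ is not merely closed but \emph{nuclear}, so that $\downset$ of its intersection with the clopen set $X \setminus V$ is again clopen. Once this is in place, everything else follows from results already at hand.
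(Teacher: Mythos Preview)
Your proof is correct and follows essentially the same line as the paper's. The only minor difference is in the converse of (1): the paper simply extends the clopen subset $\downset y \cap N_j$ of $N_j$ to an arbitrary clopen $U \subseteq X$ (which is immediate since $N_j$ is closed in the Stone space $X$), whereas you take the extra step of extending the complementary clopen upset to a clopen upset of $X$; both routes then finish identically via nuclearity of $N_j$.
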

\begin{proof}
	\ref{Y_j = Y cap N_j}
	Let $Y_j$ be the localic part of $N_j$. We need to show that $Y_j = N_j \cap Y$. It is straightforward to see that
	$N_j \cap Y \subseteq Y_j$. It remains to show that $Y_j \subseteq Y$. Suppose $y \in Y_j$. Then $\downset y \cap N_j$
	is clopen in $N_j$. Therefore, there is clopen $U \subseteq X$ such that $\downset y \cap N_j = U \cap N_j$. Since
	$N_j$ is a nuclear set, $\downset (U \cap N_j)$ is clopen in $X$. But
	$\downset (U \cap N_j) = \downset (\downset y \cap N_j) = \downset y$ because $y \in N_j$. Thus, $y \in Y$.

  \ref{Nd cofinal inductive}
  Apply \cref{d inductive dense,inductive and core,lem: dense = cofinal}.

	\ref{Xd arithmetic}
	Since $L$ is arithmetic, so is $L_d$ by \cref{Ld ari}. Therefore, the result follows from \cref{thm: another duality}
	because $N_d$ is the Priestley dual of $L_d$.
	
	\ref{Xd = cl rmax Y}
	$N_d \cap Y$ is the localic part of $N_d$ by \ref{Y_j = Y cap N_j}, and $N_d$ is an SL-space by \ref{Xd arithmetic}.
	Hence, $\cl(N_d \cap Y) = N_d$.
\end{proof}

Let $X$ be an arithmetic L-space. Since $\clopsup(X) = K(\clopup(X))$ (see \cref{rem: 2.8}),
$d : \clopup(X) \to \clopup(X)$ is given by
\[
	d\,U = \cl\bigcup \{ V^{**} \mid V \in \clopsup(X) \mbox{ and } V \subseteq U \},
\]
where $V^* = X \setminus \downset V$ (see, e.g., \cite[p.~20]{Esakia2019}), so $x \in V^{**}$ iff
$\upset x \subseteq \downset V$. We also recall (see \cref{def jcore}) that the $d$-core of $U \in \clopup(X)$ is given
by 
\[
	\dcore U = \bigcup \{d\,V \mid V \in \clopsup(X),\, V \subseteq U\}.
\]

\begin{lemma} \plabel{lem: maxY}
	Let $L$ be an arithmetic frame, $X$ its Priestley space, and $Y$ the localic part of $X$.
	\begin{enumerate}
		\item\label[lem: maxY]{dU-1} If $U \in \clopsup(X)$, then $d\,U = U^{**}$.
		\item\label[lem: maxY]{dcore condition} If $U \in \clopup(X)$, then $x \in \dcore U$ iff
		$\upset x \subseteq \downset \core U$.
	\end{enumerate}
\end{lemma}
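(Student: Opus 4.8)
The plan is to prove the two parts separately, using the unravelling of the operators $d$ and $\core$ that was recorded just before the statement, together with the Priestley-side descriptions of pseudocomplements ($V^* = X \setminus \downset V$, so $x \in V^{**}$ iff $\upset x \subseteq \downset V$) and of the nucleus $j_{N_d}$ acting on clopen upsets.

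For \ref{dU-1}, suppose $U \in \clopsup(X)$. By the displayed formula for $d$ on $\clopup(X)$ we have $d\,U = \cl\bigcup\{V^{**} \mid V \in \clopsup(X),\ V \subseteq U\}$. Since $U$ itself is a clopen Scott upset contained in $U$, the union contains $U^{**}$, giving $U^{**} \subseteq \cl U^{**} \subseteq d\,U$ (note $U^{**}$ is a clopen upset, hence closed, so the closure is harmless here, but to be safe I would argue: for every $V \in \clopsup(X)$ with $V \subseteq U$ we have $V^{**} \subseteq U^{**}$ by monotonicity of $(-)^{**}$, hence $\bigcup\{V^{**}\} \subseteq U^{**}$, and since $U^{**}$ is clopen, $\cl\bigcup\{V^{**}\} \subseteq U^{**}$). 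Combining the two inclusions yields $d\,U = U^{**}$.

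For \ref{dcore condition}, fix $U \in \clopup(X)$ and $x \in X$. First I would reduce the statement $x \in \dcore U$ to a statement about $\upset x$. By definition $x \in \dcore U$ iff there is $V \in \clopsup(X)$ with $V \subseteq U$ and $x \in d\,V$; by \ref{dU-1} this is $x \in V^{**}$, i.e.\ $\upset x \subseteq \downset V$. So $x \in \dcore U$ iff $\upset x \subseteq \downset V$ for some $V \in \clopsup(X)$ with $V \subseteq U$. The forward direction of the claimed equivalence is then immediate: any such $V$ satisfies $V \subseteq \core U$ (as $V$ is a clopen Scott upset inside $U$), hence $\downset V \subseteq \downset \core U$, so $\upset x \subseteq \downset \core U$. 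For the converse, assume $\upset x \subseteq \downset \core U = \downset\bigcup\{V \mid V \in \clopsup(X),\ V \subseteq U\} = \bigcup\{\downset V \mid V \in \clopsup(X),\ V \subseteq U\}$. Now $\upset x$ is closed, and I claim it is compact and covered by the clopen (by \cref{prelim fact-clopen}, $\downset V$ is clopen) sets $\{\downset V\}$. So by compactness finitely many suffice: $\upset x \subseteq \downset V_1 \cup \dots \cup \downset V_n = \downset(V_1 \cup \dots \cup V_n)$. Setting $V = V_1 \cup \dots \cup V_n$, we have $V \subseteq U$ and, since a finite union of clopen Scott upsets is a clopen Scott upset (the Scott-upset condition $\min V \subseteq Y$ is preserved under finite unions, as each minimal point of the union is minimal in some $V_i$), $V \in \clopsup(X)$. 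Thus $\upset x \subseteq \downset V$ with $V \in \clopsup(X)$, $V \subseteq U$, which by the reduction above gives $x \in \dcore U$.

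I expect the main obstacle to be the compactness argument in the converse of \ref{dcore condition}: one must be careful that $\downset V$ is genuinely clopen (which is exactly \cref{prelim fact-clopen}, valid since $X$ is an L-space) so that the cover of the closed—hence compact—set $\upset x$ by these opens admits a finite subcover, and that the finite union $V_1 \cup \dots \cup V_n$ stays inside $\clopsup(X)$. The latter is the observation that the minimal points of a finite union of closed upsets all lie in the union of the $\min V_i$'s, so the condition $\min(-) \subseteq Y$ is inherited; this is also implicitly used in \cref{inductive and core} and in \cref{def arithemitc}'s reformulation. Everything else is a direct translation through the formulas for $d$, $(-)^{**}$, and $\core$ already available in the excerpt.
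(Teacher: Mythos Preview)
Your proposal is correct and follows essentially the same approach as the paper's proof. The only cosmetic difference is in the converse of \ref{dcore condition}: the paper observes that the open cover $\{\downset V \mid V \in \clopsup(X),\ V \subseteq U\}$ is directed (precisely because finite unions of clopen Scott upsets are clopen Scott upsets) and thus extracts a single $V$ directly, whereas you first pass to a finite subcover and then take the union; these are the same argument packaged differently.
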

\begin{proof}
	\ref{dU-1}
	$d\,U = \cl\bigcup \{ V^{**} \mid V \in \clopsup(X) \mbox{ and } V \subseteq U \} = U^{**}$ since $U \in \clopsup(X)$.

	\ref{dcore condition}
	First suppose that $x \in \dcore U$. Then there is $V \in \clopsup(X)$ with $x \in d\,V$ and $V \subseteq U$.
	Therefore, $x \in V^{**}$ by \ref{dU-1}, which means that $\upset x \subseteq \downset V$. Since $V$ is a Scott upset,
	$V \subseteq U$ implies $V \subseteq \core U$. Thus, $x \in d\,V$ implies $\upset x \subseteq \downset \core U$.
	For the converse, if $\upset x \subseteq \downset \core U$, then
	\[
		\upset x \subseteq {\big\downarrow} \bigcup \{V \in \clopsup(X) \mid V \subseteq U\}
		= \bigcup \{\downset V \mid V \subseteq U \mbox{ and } V \in \clopsup(X)\}.
	\]
	Hence, by \cref{prelim fact-clopen}, $\{\downset V \mid V \subseteq U \mbox{ and } V \in \clopsup(X)\}$ is an open
	cover of $\upset x$. Since $\upset x$ is compact and this open cover is directed, there is $V \in \clopsup(X)$ such
	that $V \subseteq U$ and $\upset x \subseteq \downset V$. This yields that $x\in V^{**}$, so $x\in d\,V$ by
	\ref{dU-1}. Consequently, $x \in \dcore U$.
\end{proof}

Let $X$ be an arithmetic L-space and $Y$ its localic part. We let $Y_d$ denote the localic part of $N_d$. By
\cref{Y_j = Y cap N_j},  $Y_d = N_d \cap Y$. We conclude the section by giving several characterizations of $Y_d$. For
this we need the following lemma. 

\begin{lemma}\label{scott intersection of clopen scott}
	Let $X$ be an algebraic L-space and $F$ a Scott upset of $X$. Then
	\[
		F = \bigcap \{U \in \clopsup(X) \mid F \subseteq U\}.
	\]
\end{lemma}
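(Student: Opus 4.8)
The plan is to prove the nontrivial inclusion
$\bigcap\{U \in \clopsup(X) \mid F \subseteq U\} \subseteq F$; the reverse inclusion is immediate since every such $U$ contains $F$. Because $F$ is a closed upset, \cref{prelim fact-intersections} gives $F = \bigcap\{U \in \clopup(X) \mid F \subseteq U\}$. Hence it suffices to establish the following: for every $U \in \clopup(X)$ with $F \subseteq U$ there exists $V \in \clopsup(X)$ with $F \subseteq V \subseteq U$. Granting this, if $x \notin F$ we first pick $U \in \clopup(X)$ with $F \subseteq U$ and $x \notin U$, then shrink it to $V \in \clopsup(X)$ with $F \subseteq V \subseteq U$, so that $x \notin V$; this shows $x$ is not in the displayed intersection.

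To produce such a $V$, fix $U \in \clopup(X)$ with $F \subseteq U$. Since $X$ is algebraic, $\core U$ is dense in $U$; as $\core U \subseteq U$ and $U$ is clopen, hence closed, this forces $\cl \core U = U$. Moreover $\core U$ is an open upset, being a union of clopen Scott upsets. Therefore $F \subseteq U = \cl \core U$, and since $F$ is a Scott upset, \cref{Scott upset lemma} yields $F \subseteq \core U = \bigcup\{W \in \clopsup(X) \mid W \subseteq U\}$.

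Finally, the family $\{W \in \clopsup(X) \mid W \subseteq U\}$ is directed under union: finite unions of clopen Scott upsets are again clopen Scott upsets (equivalently, $\clopsup(X) = K(\clopup(X))$ is closed under finite joins; see \cref{rem: 2.8}), and such a union is still contained in $U$. Thus this family is a directed open cover of $F$. As $F$ is closed in the compact space $X$, it is compact, so finitely many members cover $F$, and directedness lets us collapse these to a single $V \in \clopsup(X)$ with $F \subseteq V$ and $V \subseteq U$, as required. The only step needing genuine care is the passage from $F \subseteq \cl \core U$ to $F \subseteq \core U$, which is precisely where the Scott property of $F$ enters via \cref{Scott upset lemma}; the remainder is a routine compactness argument using algebraicity of $X$.
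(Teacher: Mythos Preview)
Your proof is correct and follows essentially the same approach as the paper: reduce to finding, for each clopen upset $U \supseteq F$, a clopen Scott upset $V$ with $F \subseteq V \subseteq U$, use algebraicity to get $U = \cl \core U$, invoke \cref{Scott upset lemma} to pass from $F \subseteq \cl \core U$ to $F \subseteq \core U$, and then apply compactness. You have simply spelled out the compactness step in more detail than the paper does.
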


\begin{proof}
	Since $F$ is a closed upset, $F = \bigcap \{U \in \clopup(X) \mid F \subseteq U\}$ (see
	\cref{prelim fact-intersections}). Thus, it suffices to show that for each $U \in \clopup(X)$ with $F \subseteq U$,
	there is ${V \in \clopsup(X)}$ with $F \subseteq V \subseteq U$. Since $X$ is an algebraic L-space, $U = \cl \core U$,
	so $F \subseteq U$ implies that $F \subseteq \core U$ by \cref{Scott upset lemma}. Now apply compactness to obtain the
	desired~$V$.
\end{proof}

\begin{theorem} \plabel{eqv conditions rmax}
	Let $L$ be an arithmetic frame, $X$ its Priestley space, and $Y$ the localic part of $X$. For $y \in Y$, the following
	are equivalent.
	\begin{enumerate}
		\item\label[eqv conditions rmax]{eqv rmax 1} $y \in Y_d$.
		\item\label[eqv conditions rmax]{eqv rmax 2} $\forall U \in \clopup(X) , \, y \in \dcore U \implies y \in U$.
		\item\label[eqv conditions rmax]{eqv rmax 3}
		$\forall V \in \clopsup(X), \, \max \upset y \subseteq V \implies y \in V$.
		\item\label[eqv conditions rmax]{eqv rmax 4} $\{y\} = \max(\downset x \cap Y)$ for some $x \in \max X$.
	\end{enumerate}
\end{theorem}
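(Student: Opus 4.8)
The plan is to prove $(1)\Leftrightarrow(2)$ and $(2)\Leftrightarrow(3)$ by unfolding definitions, then the substantive implication $(3)\Rightarrow(4)$, and finally an easy $(4)\Rightarrow(2)$. For the first two equivalences I would proceed as follows. Since $y\in Y$ and $Y_d=N_d\cap Y$, condition $(1)$ says exactly $y\in N_d$, i.e.\ $y\in d\,U\Rightarrow y\in U$ for all $U\in\clopup(X)$ (the converse is automatic). As $d$ is inductive (\cref{d inductive dense}), \cref{inductive and core:2} gives $d\,U=\cl\dcore U$, and $\dcore U$ is an open upset --- a union of the clopen upsets $V^{**}$, by \cref{prelim fact-clopen} --- so \cref{y and closures} gives $y\in d\,U\Leftrightarrow y\in\dcore U$; this yields $(1)\Leftrightarrow(2)$. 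For $(2)\Leftrightarrow(3)$ the bridge is that, for $V\in\clopsup(X)$, one has $\max\upset y\subseteq V$ iff $\upset y\subseteq\downset V$ iff $y\in V^{**}=d\,V$ (the first ``iff'' using \cref{prelim fact-min max exist} and that the points of $\max\upset y$ are maximal in $X$; the last equality is \cref{dU-1}), whence $y\in\dcore V$. Thus $(2)$ applied to such $V$ gives $(2)\Rightarrow(3)$; conversely, if $y\in\dcore U$ then $\upset y\subseteq\downset\core U$ by \cref{dcore condition}, which (again by maximality of the points of $\max\upset y$) forces $\max\upset y\subseteq\core U$, and since $\max\upset y$ is closed (\cref{prelim fact-max closed}) hence compact while $\core U$ is a directed union of clopen Scott upsets, there is $V\in\clopsup(X)$ with $\max\upset y\subseteq V\subseteq U$, so $(3)$ gives $y\in V\subseteq U$.

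The heart of the matter is $(3)\Rightarrow(4)$. Assuming $(3)$, I would first locate the witness $x$: the sets $\max\upset y\setminus V$, for $V\in\clopsup(X)$ with $y\notin V$, are closed subsets of the nonempty compact space $\max\upset y$, each nonempty by the contrapositive of $(3)$, and the family is downward directed because a finite union of clopen Scott upsets omitting $y$ is again one; by the finite intersection property there is $x$ in all of them. Then $x\in\max\upset y\subseteq\max X$, $x\ge y$, and $(\star)$: for every $V\in\clopsup(X)$, $x\in V\Rightarrow y\in V$. Next I would show $\downset x\cap Y\subseteq\downset y$, which together with $y\in\downset x\cap Y$ makes $y$ the greatest element of $\downset x\cap Y$, so $\max(\downset x\cap Y)=\{y\}$: if $z\in\downset x\cap Y$ with $z\nleq y$, Priestley separation gives $W\in\clopup(X)$ with $z\in W$, $y\notin W$; as $X$ is an algebraic L-space, $W=\cl\core W$, so $z\in\core W$ by \cref{y and closures}, i.e.\ $z\in V\subseteq W$ for some $V\in\clopsup(X)$; since $z\le x$ and $V$ is an upset, $x\in V$ while $y\notin V$, contradicting $(\star)$. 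I expect this extraction of $x$ and the verification of $(\star)$ to be where essentially all the content lies.

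Finally, $(4)\Rightarrow(2)$ uses the \emph{strong} form just obtained, that $y$ is the greatest element of $\downset x\cap Y$ for the relevant $x\in\max X$: if $y\in\dcore U$ then $\upset y\subseteq\downset\core U$ by \cref{dcore condition}, so (since $x\ge y$ is maximal in $X$) $x\in\core U$, hence $x\in W\subseteq U$ for some $W\in\clopsup(X)$; choosing $m\in\min W$ with $m\le x$ (\cref{prelim fact-min max exist}) we have $m\in Y$ since $W$ is a Scott upset, so $m\in\downset x\cap Y$ and thus $m\le y$; as $W$ is an upset, $y\in W\subseteq U$. The one delicate point is exactly this reading of $(4)$: the proof of $(3)\Rightarrow(4)$ produces $y$ as the greatest element of $\downset x\cap Y$, and $(4)\Rightarrow(2)$ consumes precisely that stronger statement, so there is no need to appeal to maximal elements of the (non-closed) set $\downset x\cap Y$ being attained.
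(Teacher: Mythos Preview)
Your overall scheme and most of the individual steps are correct and close to the paper's (which runs the cycle $(1)\Rightarrow(2)\Rightarrow(3)\Rightarrow(4)\Rightarrow(1)$). In particular, your $(3)\Rightarrow(4)$ via the finite intersection property on the closed sets $\max\upset y\setminus V$ is a pleasant alternative to the paper's covering-by-contradiction argument, and it does yield the stronger conclusion that $y$ is the \emph{greatest} element of $\downset x\cap Y$ for the witness $x$.

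However, your handling of the ``delicate point'' is a genuine gap. To prove that $(1)$--$(4)$ are equivalent, the implication $(4)\Rightarrow(2)$ must start from $(4)$ \emph{as stated}, not from the strengthening your proof of $(3)\Rightarrow(4)$ happens to produce. As written, you have established $(1)\Leftrightarrow(2)\Leftrightarrow(3)\Rightarrow(4')\Rightarrow(2)$ and $(4')\Rightarrow(4)$, where $(4')$ is ``$y$ is the greatest element of $\downset x\cap Y$ for some $x\in\max X$''; but $(4)\Rightarrow(2)$ is missing, so $(4)$ is only shown to be implied by the others, not to imply them. The missing ingredient is precisely what you tried to avoid: that every $y'\in\downset x\cap Y$ lies below some element of $\max(\downset x\cap Y)$ (hence below $y$). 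This is the standard fact that completely prime filters contained in a given filter can be extended to maximal such (equivalently, minimal primes over an element exist), a routine Zorn's lemma argument. The paper's own proof of $(4)\Rightarrow(1)$ uses exactly this at the line ``Consequently, $y'\le y$'', and records the fact explicitly a bit later (as the lemma on extending $y$ to $\max(\downset x\cap Y)$). Once you insert this one sentence, your $(4)\Rightarrow(2)$ works from $(4)$ directly and the cycle closes.
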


\begin{proof}
	\ref{eqv rmax 1}$\Rightarrow$\ref{eqv rmax 2}
	If $y \in \dcore U$, then $y \in d\,U$. Therefore, since $y\in Y_d \subseteq N_d$, \cref{Nj-1} implies that $y\in U$.

	\ref{eqv rmax 2}$\Rightarrow$\ref{eqv rmax 3}
	Suppose $\max \upset y \subseteq V$. Then $\upset y \subseteq \downset V$. Since $V$ is a clopen Scott upset,
	$V = \core V$ by \cref{rem: 2.8}. Therefore, $\downset V = \downset \core V$, and hence $y \in \dcore V$ by
	\cref{dcore condition}. Thus, $y \in V$ by \ref{eqv rmax 2}.

	\ref{eqv rmax 3}$\Rightarrow$\ref{eqv rmax 4}
	Suppose that for every $x \in \max \upset y$ there is $y' \in \downset x \cap Y$ with $y < y' \leq x$. Then
	$y' \nleq y$, so \cref{scott intersection of clopen scott} implies that there is $V_x \in \clopsup(X)$ with
	$y' \in V_x$ and $y \not \in V_x$. Therefore, $\max \upset y \subseteq \bigcup V_x$, and since $\max \upset y$ is
	closed (see \cref{prelim fact-max closed}) and the open cover is directed, there is $V \in \clopsup(X)$ containing
	$\max \upset y$ and missing $y$, a contradiction.

	\ref{eqv rmax 4}$\Rightarrow$\ref{eqv rmax 1}
	It is sufficient to show that $da \in y$ implies $a \in y$ for each $a \in L$, and hence it is enough to show that
	$y \in d\,U$ implies $y \in U$ for each $U \in \clopup(X)$. Let $y \in d\,U$. Then $y \in \cl \dcore U$ since $d$ is
	inductive. Therefore, $y \in \dcore U$ by \cref{y and closures}. Thus, by \cref{dU-1}, $y \in d\,V = V^{**}$ for some
	$V \in \clopsup(X)$ with $V \subseteq U$. Hence, $\upset y \subseteq \downset V$. By \ref{eqv rmax 4}, there is
	$x \in \max X$ with $\{y\} = \max(\downset x \cap Y)$. But then $x \in V$, and since $V$ is a Scott upset, there is
	$y' \in V \cap Y$ with $y' \leq x$. Consequently, $y' \leq y$, and so $y \in V \subseteq U$.
\end{proof}

\section{\texorpdfstring{$\max Y$}{max Y} and regularity of \texorpdfstring{$L_d$}{Ld}}
\label{new section 5}

Martinez and Zenk \cite[Prop.~5.2]{MartinezZenk2003} characterized when $L_d$ is a regular frame. In this section we
give several alternative characterizations, utilizing Priestley duality. This, in particular, involves the maximal
spectrum $\max Y$ of the localic part $Y$ of the Priestley space of $L$. As a consequence, we obtain that $L_d$ is
regular iff $L_d$ is locally Stone. 

Recall (see, e.g., \cite[p.~89]{PicadoPultr2012}) that a frame $L$ is \emph{regular}  if for all $a \in L$ we have
\[
	a = \bigvee \{b \in L \mid b^* \vee a = 1\}.
\]
 
Priestley spaces of regular frames were studied in
\cite{PultrSichler1988,BezhanishviliGabelaiaJibladze2016,BezhanishviliMelzer2022b}. We recall:
\begin{definition}[{\cite[Defs.~7.1 and 7.6]{BezhanishviliMelzer2022b}}]
	Let $X$ be an L-space.
	\begin{enumerate}
		\item For $U \in \clopup(X)$, the \emph{regular part} of $U$ is
		\[
			\reg U = \bigcup \{V \in \clopup(X) \mid \downset V \subseteq U\}.
		\]
		\item $X$ is \emph{L-regular} if $\cl \reg U = U$ for each $U \in \clopup(X)$.
	\end{enumerate}
\end{definition}

\begin{theorem} \plabel{Lregular facts}
	Let $L$ be a frame, $X$ its Priestley space, and $Y$ the localic part of $X$.
	\begin{enumerate}
		\item \textup{\cite[Lem~3.6]{BezhanishviliGabelaiaJibladze2016}} $L$ is regular iff $X$ is L-regular.
		\label[Lregular facts]{regular iff}
		\item \textup{\cite[Proof of Thm.~7.11]{BezhanishviliMelzer2022b}} If $X$ is an SL-space and $Y$ is regular, then
		$X$ is L-regular.\label[Lregular facts]{Y regular implies X regular}
		\item \textup{\cite[Lem.~7.15(3)]{BezhanishviliMelzer2022b}} If $X$ is L-regular, then $Y \subseteq \min X$.
		\label[Lregular facts]{reg implies Y subset min}
	\end{enumerate}	
\end{theorem}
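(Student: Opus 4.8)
The plan is to reprove all three parts with Priestley duality, pushing the relevant notions through the Stone map $\varphi\colon L\to\clopup(X)$ and using the recorded facts about localic points, especially \cref{prelim fact-clopen}, \cref{prelim fact-join and meet}, and \cref{y and closures}. For part (1), recall that the pseudocomplement in $\clopup(X)$ is $U^{*}=X\setminus\downset U$, which is a clopen upset by \cref{prelim fact-clopen}; hence $\varphi(a^{*})=X\setminus\downset\varphi(a)$. Now, for $a,b\in L$, writing $U=\varphi(a)$ and $V=\varphi(b)$, the relation $b^{*}\vee a=1$ becomes $(X\setminus\downset V)\cup U=X$, that is, $\downset V\subseteq U$. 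Since $\varphi$ is onto $\clopup(X)$, the family $\{\varphi(b)\mid b^{*}\vee a=1\}$ is exactly $\{V\in\clopup(X)\mid\downset V\subseteq U\}$, so applying the frame isomorphism $\varphi$ to the regularity equation $a=\bigvee\{b\mid b^{*}\vee a=1\}$ and computing the join via \cref{prelim fact-join and meet} turns it into $U=\cl\reg U$. Thus $L$ is regular iff $X$ is L-regular. The points to watch are the direction of the inclusion in the translation of $b^{*}\vee a=1$ and the use of surjectivity of $\varphi$ to see that $\reg U$ really ranges over all clopen upsets.

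For part (2) I would reduce to part (1). Since $X$ is an SL-space, $L$ is spatial, so by \cref{topology Y} $L$ is isomorphic to the frame of opens of its space of points, which is $Y$ with the topology $\{U\cap Y\mid U\in\clopup(X)\}$. For a spatial frame, frame-regularity coincides with topological regularity of the point-space: for open sets $W,O$ of a space, $W^{*}\vee O=1$ holds exactly when $\cl W\subseteq O$, so the requirement $O=\bigvee\{W\mid W^{*}\vee O=1\}$ for all open $O$ is precisely regularity of the space. Hence regularity of the space $Y$ yields regularity of the frame $L$, and then $X$ is L-regular by \cref{regular iff}. (Alternatively, one can argue directly that $U\cap Y\subseteq\reg U$ for each $U\in\clopup(X)$, using that a point of $Y$ is a completely prime filter of $L$ and so must contain some $b$ with $b^{*}\vee a=1$ whenever it contains $a$; then $U\subseteq\cl(U\cap Y)\subseteq\cl\reg U\subseteq U$ because $Y$ is dense in $X$.) The only real obstacle here is invoking this standard equivalence between regular spatial frames and regular spaces of points; the rest is formal.

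For part (3), suppose $X$ is L-regular and let $y\in Y$; I would prove $y\in\min X$ by contradiction. If $z<y$ for some $z\in X$, then $y\nleq z$, so Priestley separation supplies a clopen upset $U$ with $y\in U$ and $z\notin U$. The set $\reg U=\bigcup\{V\in\clopup(X)\mid\downset V\subseteq U\}$ is an open upset, so $y\in U=\cl\reg U$ together with \cref{y and closures} gives $y\in\reg U$; thus $y\in V$ for some clopen upset $V$ with $\downset V\subseteq U$, and then $z\le y$ forces $z\in\downset V\subseteq U$, contradicting $z\notin U$. Hence $Y\subseteq\min X$. This part is short; the only subtlety is noticing that $\reg U$ is an open upset, which is what makes \cref{y and closures} applicable.
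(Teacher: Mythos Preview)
Your arguments are correct. Note, however, that the paper does not actually prove this theorem: it is a collection of three results quoted from the literature, with the proofs residing in \cite{BezhanishviliGabelaiaJibladze2016} and \cite{BezhanishviliMelzer2022b}. What you have supplied are self-contained proofs using the Priestley machinery already set up in the paper, which is a reasonable thing to do. Your argument for \ref{regular iff} is the standard translation via $\varphi$ and matches the cited source; your argument for \ref{Y regular implies X regular} via spatiality and the equivalence of frame and topological regularity is clean (the alternative sketch you mention, showing $U\cap Y\subseteq\reg U$ directly, is closer in spirit to the proof in \cite{BezhanishviliMelzer2022b}); and your argument for \ref{reg implies Y subset min} is essentially the one in the cited lemma. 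One small remark on \ref{reg implies Y subset min}: you could avoid the contradiction framing and argue directly that for any $z\le y$ and any clopen upset $U$ containing $y$, the above reasoning forces $z\in U$, whence $z\in\bigcap\{U\in\clopup(X)\mid y\in U\}=\upset y$ and so $z=y$.
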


An element $p \neq 1$ of a frame $L$ is \emph{\textup{(}meet-\textup{)}prime} if $a \wedge b \leq p$ implies $a \leq p$
or $b \leq p$ (see, e.g., \cite[p.~13]{PicadoPultr2012}). A prime element $p$ is \emph{minimal prime with respect to}
$a \in L$ if $p$ is minimal among the primes $q \geq a$. It is known (see, e.g., \cite[p.~264]{NiefeldRosenthal1987})
that every prime element $p$ greater than $a \in L$ has a minimal prime element $q$ with respect to $a$ beneath it.
Since the assignment $p \mapsto L \setminus \downset p$ establishes an isomorphism between the posets of prime elements
and completely prime filters  (see, e.g., \cite[p.~14]{PicadoPultr2012}), this condition can equivalently be formulated
as follows: for every completely prime filter $P$ contained in a filter $F$, there exists a completely prime filter $Q$
that is maximal among the completely prime filters contained in $F$. Thus, we arrive at the following lemma, which gives
the means to find (relatively) maximal localic points.

\begin{lemma}\plabel{lem: ext}
	Let $L$ be a frame, $X$ its Priestley space, $Y$ the localic part of $X$, and $y \in Y$.
	\begin{enumerate}
		\item $\upset y \cap \max Y \neq \varnothing$.\label[lem: ext]{extend y to max}
		\item $\upset y \cap \max (\downset x \cap Y) \neq \varnothing$ for every $x \in X$ with $y \leq x$.
		\label[lem: ext]{extend y to max relative}
	\end{enumerate}
\end{lemma}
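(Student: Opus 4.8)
The plan is to read both parts directly off the order-theoretic fact recalled just above the statement --- that over any frame every prime element above $a$ dominates a minimal prime above $a$, equivalently that every completely prime filter contained in a filter $F$ of $L$ is itself contained in a completely prime filter which is maximal among the completely prime filters contained in $F$ --- after passing through Priestley duality for frames. The dictionary I would use is: points of $X$ are the prime filters of $L$ ordered by inclusion, the localic part $Y$ consists of the completely prime filters, so $\max Y$ is the set of completely prime filters that are maximal, under inclusion, among \emph{all} completely prime filters; and for $y\in Y$ and $x\in X$ one has $y\le x$ if and only if $y\subseteq x$ as filters of $L$.

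For part (1) I would apply the recalled fact with $F=L$ --- equivalently, in its $a=0$ form, which says that every prime element has a minimal prime beneath it. The completely prime filter $y$ is contained in $L$, hence it is contained in some completely prime filter $z$ that is maximal among all completely prime filters; that is, $z\in\max Y$ and $y\le z$, so $z\in\upset y\cap\max Y$. For part (2), fix $x\in X$ with $y\le x$. Then $x$ is itself a filter of $L$ (being a prime filter), and $y\le x$ gives $y\subseteq x$, so I would apply the recalled fact with $F=x$: the completely prime filter $y$ lies inside a completely prime filter $z$ that is maximal among the completely prime filters contained in $x$. Since the completely prime filters contained in $x$ are exactly the elements of $\downset x\cap Y$, this says $z\in\max(\downset x\cap Y)$ while $y\le z$, so $z\in\upset y\cap\max(\downset x\cap Y)$.

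I do not expect a genuine obstacle here: all of the content sits in the recalled fact, and the only thing to get right is the bookkeeping of the translation --- that $\max Y$ and $\max(\downset x\cap Y)$, taken in the Priestley order, correspond respectively to the maximal completely prime filters of $L$ and to the maximal completely prime filters contained in $x$, and that each $x\in X$ genuinely provides the filter $F=x$. If one wished to avoid the citation, the same two statements follow from a direct Zorn's lemma argument once one checks that the union of a chain of proper completely prime filters is again a proper completely prime filter --- which is in any case the mechanism behind the recalled fact.
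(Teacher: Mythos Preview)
Your proposal is correct and follows essentially the same approach as the paper: both parts are obtained by applying the recalled fact about completely prime filters with $P=y$ and $F=L$ for (1), and $P=y$ and $F=x$ for (2). Your write-up simply spells out the Priestley dictionary that the paper leaves implicit.
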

\begin{proof}
	For \ref{extend y to max} take $P = y$ and $F = L$, and for \ref{extend y to max relative} take $P = y$ and $F = x$.
\end{proof}

We show that $\max Y\subseteq Y_d$, but that the converse is not true in general. For this we require the following lemma.

\begin{lemma} \plabel{lem: scottupsets}
	Let $L$ be an arithmetic frame, $X$ its Priestley space, and $Y$ the localic part of $X$.
	\begin{enumerate}
						\item\label[lem: scottupsets]{intersection of Scott upsets} If $F$ and $G$ are Scott upsets of $X$, then so is $F \cap G$.
		\item\label[lem: scottupsets]{max y go in scott upsets} Suppose $y \in \max Y$ and $F$ is a Scott upset of $X$. If $\upset y \cap F \neq \varnothing$, then $y \in F$.
	\end{enumerate}
\end{lemma}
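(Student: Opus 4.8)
The plan is to prove \ref{intersection of Scott upsets} first and then deduce \ref{max y go in scott upsets} from it, using that the principal upset of a localic point is itself a Scott upset.

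For \ref{intersection of Scott upsets}: since $F\cap G$ is a closed upset, by the characterization of Scott upsets in \cref{Scott upset lemma} it suffices to show that $F\cap G\subseteq\cl U$ implies $F\cap G\subseteq U$ for every open upset $U$ of $X$. The idea is to approximate $F$ and $G$ from above by \emph{clopen} Scott upsets and then use arithmeticity to keep finite intersections inside $\clopsup(X)$. We may assume $F\ne X\ne G$, since otherwise $F\cap G$ equals $G$ or $F$. As $X$ is an arithmetic (hence algebraic) L-space, \cref{scott intersection of clopen scott} lets us write $F=\bigcap\mathcal U$ and $G=\bigcap\mathcal V$, where $\mathcal U=\{U'\in\clopsup(X)\mid F\subseteq U'\}$ and $\mathcal V=\{V'\in\clopsup(X)\mid G\subseteq V'\}$ are nonempty (an empty such family would force $F=X$ or $G=X$). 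Hence $F\cap G=\bigcap\{U'\cap V'\mid U'\in\mathcal U,\ V'\in\mathcal V\}$, and each $U'\cap V'$ again lies in $\clopsup(X)$ by \cref{def arithemitc}.

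Now suppose $F\cap G\subseteq\cl U$. Since $X$ is an L-space, $\cl U$ is clopen, so each $(U'\cap V')\setminus\cl U$ is clopen, in particular closed, and the intersection of this whole family equals $(F\cap G)\setminus\cl U=\varnothing$. By compactness of $X$, finitely many of these sets already intersect to $\varnothing$; that is, there are $W_1,\dots,W_n$ among the sets $U'\cap V'$ with $W\coloneqq W_1\cap\dots\cap W_n\subseteq\cl U$. Each $W_i$ lies in $\clopsup(X)$, so $W\in\clopsup(X)$ by \cref{def arithemitc}, and $F\cap G\subseteq W$. Being a clopen Scott upset contained in $\cl U$, $W$ satisfies $W\subseteq U$ by \cref{Scott upset lemma}; hence $F\cap G\subseteq W\subseteq U$, and \cref{Scott upset lemma} again gives that $F\cap G$ is a Scott upset.

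For \ref{max y go in scott upsets}: first note that $\upset y$ is a Scott upset, being closed — $\{y\}$ is closed in the Stone space $X$, so \cref{prelim fact-up down closed} applies — with $\min\upset y=\{y\}\subseteq Y$ since $y\in\max Y\subseteq Y$. Hence $\upset y\cap F$ is a Scott upset by \ref{intersection of Scott upsets}, and it is nonempty by hypothesis, so \cref{prelim fact-min max exist} provides $m\in\min(\upset y\cap F)\subseteq Y$. As $m\in\upset y$ we have $y\le m$, and since $y\in\max Y$ while $m\in Y$, this forces $m=y$; therefore $y=m\in\upset y\cap F\subseteq F$. The whole argument rests on \ref{intersection of Scott upsets}, whose essential ingredient is arithmeticity: \cref{def arithemitc} is precisely what keeps finite intersections of clopen Scott upsets inside $\clopsup(X)$ and thus lets the compactness argument close. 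I expect the main obstacle to be recognizing that one should reduce, via \cref{scott intersection of clopen scott}, to finitely many clopen Scott upsets before invoking compactness and \cref{Scott upset lemma}, rather than trying to control the minimal points of $F\cap G$ directly.
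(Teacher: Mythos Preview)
Your proof is correct and, for part~\ref{max y go in scott upsets}, essentially identical to the paper's. For part~\ref{intersection of Scott upsets} the two arguments share the same skeleton---write $F$ and $G$ as intersections of clopen Scott upsets via \cref{scott intersection of clopen scott} and use arithmeticity to stay inside $\clopsup(X)$---but diverge at the last step. The paper simply observes that $F\cap G$ is then a down-directed intersection of clopen Scott upsets and cites the external fact \cite[Lem.~5.14(1)]{BezhanishviliMelzer2022b} that such an intersection is again a Scott upset. You instead re-prove that fact in situ: given an open upset $U$ with $F\cap G\subseteq\cl U$, you use compactness to extract a single $W\in\clopsup(X)$ with $F\cap G\subseteq W\subseteq\cl U$ and then apply \cref{Scott upset lemma} to $W$. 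Your route is a touch longer but fully self-contained within the paper; the paper's is terser but leans on an outside reference. Both are fine.
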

\begin{proof}
	\ref{intersection of Scott upsets} This can be seen by applying \cite[Lem.~5.2]{BezhanishviliMelzer2023} and \cite[Lem.~6.3(2)]{BezhanishviliMelzer2022b}. To keep the proof self-contained, we give a short argument. By \cref{scott intersection of clopen scott}, $F$ and $G$ are intersections of down-directed families of clopen Scott upsets. Therefore, so is $F \cap G$ since the binary intersection of clopen Scott upsets is a clopen Scott upset ($X$ is an arithmetic L-space). Thus, $F \cap G$ is a Scott upset because the intersection of a down-directed family of clopen Scott upsets is a Scott upset (see \cite[Lem.~5.14(1)]{BezhanishviliMelzer2022b}).
    
	\ref{max y go in scott upsets}
		Since $\upset y \cap F$ is a nonempty closed upset, $\min(\upset y \cap F) \neq \varnothing$ (see, e.g., \cite[Thm.~3.2.1]{Esakia2019}). Because $\upset y,F$ are Scott upsets, $\upset y \cap F$ is also a Scott upset by \ref{intersection of Scott upsets}. Therefore, $\min(\upset y \cap F) \subseteq Y$. Since $y$ is underneath each point in $\min(\upset y \cap F)$ and
		$y \in \max Y$, we conclude that $\min(\upset y \cap F) = \{ y \}$, and hence $y \in F$.
				\end{proof}

\begin{theorem} \plabel{max Y subset cofinal inductive}
	Let $X$ be an algebraic L-space and $Y$ its localic part. \begin{enumerate}
	\item If $N \subseteq X$ is a cofinal inductive nuclear subset, then $\max Y \subseteq N$. \label{Y subset N}
	\item If $X$ is the Priestley space of an arithmetic frame $L$, then $\max Y \subseteq Y_d$. \label[max Y subset cofinal inductive]{Y subset Y_d}
\end{enumerate}
\end{theorem}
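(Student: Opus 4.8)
The plan is to prove part (1) first — that $\max Y \subseteq N$ for every cofinal inductive nuclear subset $N$ — and then derive part (2) as an immediate corollary via \cref{Nd cofinal inductive}, which tells us that $N_d$ is itself a cofinal inductive nuclear subset, so that $\max Y \subseteq N_d$, and hence $\max Y = \max Y \cap Y \subseteq N_d \cap Y = Y_d$ by \cref{Y_j = Y cap N_j}.

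For part (1), fix $y \in \max Y$ and a cofinal inductive nuclear subset $N$; I want to show $y \in N$. The natural route is through the dual description of nuclei: by \cref{Nj-1} (applied with $j = j_N$), it suffices to show that $j_N\,U \cap \{y\} \subseteq U$ for all $U \in \clopup(X)$ containing $y$ in $j_N\,U$ — equivalently, using \cref{Nj-2}, that whenever $\upset y \cap N \subseteq U$ for $U \in \clopup(X)$, we already have $y \in U$. So suppose $\upset y \cap N \subseteq U$. Since $N$ is a nuclear subset it is closed, hence so is $\upset y \cap N$; and since $N$ is cofinal, $\max X \subseteq N$, so by \cref{prelim fact-min max exist} every point of $\upset y \cap N$ lies below a point of $\max X \subseteq \upset y \cap N$, which means $\min(\upset y \cap N)$ consists of... well, the key structural input I want is that $\upset(\upset y \cap N) = \upset y \cap N$ is a Scott upset when $N$ is inductive: indeed $\upset y$ is a Scott upset (its minimum $y$ lies in $Y$), and $N$ inductive means $\upset(\upset y \cap N)$ is a Scott upset. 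Now $\upset y \cap N$ is a Scott upset contained in $U = \cl\,\core U$ (using that $X$ is algebraic, so $U = \cl\,\core U$); by \cref{Scott upset lemma} this forces $\upset y \cap N \subseteq \core U$. Since finite unions of clopen Scott upsets are clopen Scott upsets and $\core U$ is the directed union of the $V \in \clopsup(X)$ with $V \subseteq U$, compactness of $\upset y \cap N$ gives a single $V \in \clopsup(X)$ with $\upset y \cap N \subseteq V$ and $V \subseteq U$.

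It remains to pass from $\upset y \cap N \subseteq V \in \clopsup(X)$ to $y \in V$ (and hence $y \in U$). Here is where cofinality and $y \in \max Y$ combine: I claim $\upset y \cap V \neq \varnothing$. Indeed, pick any $x \in \max \upset y$; by \cref{prelim fact-min max exist} (or just maximality in the Stone space) such $x$ exists, and $x \in \max X$, so $x \in N$ by cofinality, whence $x \in \upset y \cap N \subseteq V$. Thus $\upset y \cap V \neq \varnothing$. Since $y \in \max Y$ and $V$ is a Scott upset, \cref{max y go in scott upsets} applies and yields $y \in V \subseteq U$, as desired. This completes part (1), and part (2) follows as described above.

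The step I expect to require the most care is the reduction via \cref{Nj-1,Nj-2}: one must correctly verify that showing "$\upset y \cap N \subseteq U \Rightarrow y \in U$ for all $U \in \clopup(X)$" really does establish $y \in N$. The argument is that if $y \notin N$, then since $N$ is closed and $y$ a localic point (so $\downset y$ is clopen), one can separate — concretely, $y \in j_{N}\,U$ for a suitable $U$ with $y \notin U$, contradicting the implication; alternatively, work directly with the frame: $y$ corresponds to a completely prime filter, $y \in N_{j_N}$ iff $j_N^{-1}[y] = y$, and the implication we proved says exactly that $U \in j_N^{-1}[y] \Rightarrow U \in y$ for clopen upsets, which by density of $\clopup(X)$ and $\varphi$ being an iso gives $j_N a \in y \Rightarrow a \in y$ for all $a$, i.e. $y \in N_{j_N} = N$. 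Everything else is an assembly of the Scott-upset machinery and compactness arguments already rehearsed in the proofs of \cref{inductive and core} and \cref{eqv conditions rmax}.
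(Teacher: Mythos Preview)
Your argument is correct, and part~(2) matches the paper exactly. For part~(1), though, you take a noticeably longer route than the paper while using the same two ingredients (inductiveness to produce a Scott upset, and \cref{max y go in scott upsets} to land $y$ inside it). You reduce to showing ``$\upset y \cap N \subseteq U \Rightarrow y \in U$'' for every $U \in \clopup(X)$, then pass through $\core U$, compactness, and an auxiliary clopen Scott upset $V$ before invoking \cref{max y go in scott upsets}. The paper skips all of this: once $F \coloneqq \upset(\upset y \cap N)$ is known to be a nonempty Scott upset (nonempty by cofinality), apply \cref{max y go in scott upsets} directly with this $F$ --- since $F \subseteq \upset y$, the hypothesis $\upset y \cap F \neq \varnothing$ is automatic --- to get $y \in F$. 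Then $y$ lies above some $z \in \upset y \cap N$, but also $z \ge y$, so $y = z \in N$ and you are done in two lines. Your detour through the characterization $y \in N \iff j_N^{-1}[y] = y$ is correct but unnecessary. (A small slip: you write ``$\upset(\upset y \cap N) = \upset y \cap N$'', but $N$ need not be an upset, so these need not coincide; you never actually use the equality, so this is harmless.)
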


\begin{proof}
	\ref{Y subset N} Let $y \in \max Y$. Since $\upset y$ is a Scott upset and $N$ is inductive, $\upset (\upset y \cap N)$ is a Scott upset. Because $N$ is cofinal, $\max X \subseteq N$, and thus $\upset y \cap N \neq \varnothing$.
	Therefore, $\upset (\upset y \cap N) \subseteq \upset y$ is a nonempty Scott upset, so $y \in \upset (\upset y \cap N)$ by \cref{max y go in scott upsets}. Hence, $\upset y \subseteq \upset (\upset y \cap N)$.
	Consequently, $\upset y = \upset (\upset y \cap N)$, and so $y \in N$.
		
	\ref{Y subset Y_d} By \cref{Nd cofinal inductive}, $N_d$ is a cofinal inductive nuclear subset of $X$. Therefore, $\max Y \subseteq N_d$ by \ref{Y subset N}, and so $\max Y \subseteq Y_d$ by \cref{Y_j = Y cap N_j}.
\end{proof}

\begin{example} \label{ex: 4.11}
	To see that in general $\max Y\ne Y_d$, let $\beta \mathbb N$
		be the Stone-Čech compactification of the natural numbers  (see, e.g., \cite[p.~174]{Engelking1989}). As is customary, we write $\mathbb N^*$ for the remainder. Let $X = \beta \mathbb N \cup \{y\}$, where the order on $X$ is defined as shown in \cref{fig 1:a}. It is well known (see, e.g., \cite[p.~28]{Dwinger1961}) that $\beta \mathbb N$ is homeomorphic to the Stone space of the powerset $\wp(\mathbb N)$ of $\mathbb N$.
Therefore, $X$ is homeomorphic to the Priestley space of the lattice $L$ obtained by adding a new top to $\wp(\mathbb N)$; see \cref{fig 1:b}. Since $\wp(\mathbb N)$ is an arithmetic frame and $1 \in K(L)$, it is clear that so is $L$, and hence $X$ is an arithmetic L-space.

Because the set of isolated points of $X$ is $\mathbb N\cup\{y\}$, we have that $\downset x$ is clopen iff $x$ is an isolated point of $X$. Thus, the localic part of $X$ is $Y= \mathbb{N} \cup \{y\}$. Therefore, $\max Y = \mathbb{N}$. 
On the other hand, $y \in N_d$ by \cref{eqv rmax 4}, so $Y_d = Y$. 
Consequently, $Y_d \ne \max Y$.
	\begin{figure}[H]
	\begin{subfigure}{.48\textwidth}
	\centering
	\begin{tikzpicture}[
	bullet/.style={fill,circle,inner sep=2pt}]
	\begin{scope}[nodes=bullet]
		\node[label=above:0] (0) at (0,0) {};
		\node[label=above:1] (1) at (1,0) {};
		\node[label=above:2] (2) at (2,0) {};
		\node[label=below:$y$] (y) at (3,-2) {};
	\end{scope}
		\path (2) -- node[auto=false]{\ldots} (4,0);
		\draw[very thick, |-|] (4, 0) to node[label=above:$\mathbb N^*$] {} (7,0);
		\draw (y) to (0);
		\draw (y) to (1);
		\draw (y) to (2);
		\draw (y) to (4,0);
		\draw (y) to (7,0);
	\end{tikzpicture}
	\caption{The Priestley space $\beta \mathbb N \cup \{y\}.$} \label{fig 1:a}
	\end{subfigure}
	\begin{subfigure}{.48\textwidth}
	\centering
	\begin{tikzpicture}[
	bullet/.style={fill,circle,inner sep=2pt}]
		\draw[thick] (0,0) ellipse (1 and 1.25) node at (0,0) {$\wp(\mathbb N)$};
		\begin{scope}[nodes=bullet]
		\node (0) at (0,-1.25) {};
		\node (1) at (0,1.25) {};
		\node (2) at (0,1.75) {};
		\draw (1) to (2);
		\end{scope}
	\end{tikzpicture}
	\subcaption{The lattice $\wp(N)$ with a new top.} \label{fig 1:b}
	\end{subfigure}
	\caption{An arithmetic L-space in which $Y_d \neq \max Y$.} \label{fig 1}
	\end{figure}
\end{example}

In order to characterize when $L_d$ is regular, we require the following lemma, which allows us  to separate Scott upsets from maximal localic points via clopen Scott upsets.

\begin{lemma}\label{y not in U gives disjoint V}
	Let $X$ be an arithmetic L-space, $Y$ its localic part, and $F$ a Scott upset of $X$.
		If $y \in \max Y \setminus F$, then there is $U \in \clopsup(X)$ with $y \in U$ and $U \cap F = \varnothing$.
\end{lemma}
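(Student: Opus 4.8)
The plan is to separate $y$ from $F$ in two stages: first produce an ordinary clopen upset that contains $y$ and misses $F$, and then shrink it to a clopen \emph{Scott} upset using the density of cores in an algebraic L-space. The only real content is the first geometric observation; everything after it is routine compactness and core manipulation.

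First I would show that $\upset y \cap F = \varnothing$. If this failed, then $\upset y \cap F \neq \varnothing$, and since $y \in \max Y$ and $F$ is a Scott upset, \cref{max y go in scott upsets} would force $y \in F$, contradicting $y \in \max Y \setminus F$. This is the only step that uses the hypotheses ``$y \in \max Y$'' and ``$F$ is a Scott upset'', and it is the main obstacle: it is exactly the passage from $y$ being a maximal localic point outside $F$ to the genuinely order-geometric disjointness $\upset y \cap F = \varnothing$.

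Next, from $\upset y \cap F = \varnothing$ I get $y \nleq f$ for every $f \in F$ (otherwise $f \in \upset y \cap F$), so Priestley separation supplies, for each $f \in F$, a clopen upset $U_f$ with $y \in U_f$ and $f \notin U_f$. Then $\{X \setminus U_f \mid f \in F\}$ is an open cover of the compact set $F$, and intersecting the $U_f$ coming from a finite subcover yields a clopen upset $W$ with $y \in W$ and $W \cap F = \varnothing$.

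Finally I would refine $W$ to a clopen Scott upset. Since $X$ is an arithmetic, hence algebraic, L-space, $\core W$ is dense in $W$; as $W$ is clopen, this gives $\cl \core W = W$. Because $\core W$ is an open upset and $y \in Y$, \cref{y and closures} applied to $y \in W = \cl \core W$ gives $y \in \core W$. By the definition of the core, $y \in V$ for some $V \in \clopsup(X)$ with $V \subseteq W$, and then $U := V$ does the job: $U \in \clopsup(X)$, $y \in U$, and $U \cap F \subseteq W \cap F = \varnothing$.
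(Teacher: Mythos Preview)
Your proof is correct. The paper's argument differs in how it reaches a clopen Scott upset: after obtaining $\upset y \cap F = \varnothing$ exactly as you do, it invokes \cref{scott intersection of clopen scott} to write $\upset y$ as the intersection of all clopen Scott upsets containing $y$, and then uses compactness of $F$ together with the arithmetic property (finite intersections of clopen Scott upsets remain clopen Scott upsets) to extract a single $U \in \clopsup(X)$ with $y \in U$ and $U \cap F = \varnothing$. Your route instead produces an ordinary clopen upset $W$ by Priestley separation and compactness, and then refines it to a clopen Scott upset via $y \in \cl\core W$ and \cref{y and closures}; this refinement step uses only that $X$ is algebraic and that $y \in Y$, not the arithmetic property. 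Both approaches are short; the paper's is a one-liner once \cref{scott intersection of clopen scott} is in hand, while yours is more self-contained and avoids that lemma entirely.
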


\begin{proof}
Since $y \not \in F$, we have $\upset y \cap F = \varnothing$ by \cref{max y go in scott upsets}.
Therefore, \cref{scott intersection of clopen scott} yields that $\bigcap \{U \in \clopsup(X) \mid y \in U\} \cap F = \varnothing$.
Thus, we can use compactness and the fact that finite intersections of clopen Scott upsets are again Scott upsets ($X$ is an arithmetic L-space) to produce $U \in \clopsup(X)$ with $y \in U$ and $U \cap F = \varnothing$.
\end{proof}

We recall that a space $X$ is \emph{locally Stone} if $X$ is zero-dimensional, locally compact, and Hausdorff. Thus, $X$ is locally Stone if in the definition of a Stone space we weaken compactness to local compactness. A frame $L$ is \emph{locally Stone} if it is isomorphic to the frame of opens of a locally Stone space. By \cite[Thm.~3.11]{BezhKornell2023}, a frame is locally Stone iff it is algebraic and zero-dimensional (each element is a join of complemented elements).
We will use the following fact: if $L$ is an algebraic frame and $X$ its Priestley space, then the localic part $Y$ of $X$ is locally compact (see \cite[Thm.~5.10]{BezhanishviliMelzer2022b}, where the result is proved in the more general setting of continuous frames). 

\begin{theorem} \label{thm: Ld regular}
 	Let $L$ be an arithmetic frame, $X$ its Priestley space, and $Y$ the localic part of $X$. The following are equivalent.
 	\begin{enumerate}
 		\item $L_d$ is regular.
 		\item $N_d$ is L-regular.
 		\item $Y_d$ is an antichain.
 		\item $\max Y = Y_d$.
 		\item $Y_d$ is a locally Stone space.
        \item $L_d$ is a locally Stone frame.
 	\end{enumerate}
\end{theorem}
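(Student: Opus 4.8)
The plan is to prove the equivalences in a cycle, using the dual characterizations already established. I would first dispatch (1)$\Leftrightarrow$(2): this is immediate from \cref{regular iff}, applied to the frame $L_d$ whose Priestley space is $N_d$ (recall \cref{Xd arithmetic} says $N_d$ is an arithmetic L-space, and \cref{Y_j = Y cap N_j} identifies its localic part as $Y_d$). Next, (2)$\Rightarrow$(4): if $N_d$ is L-regular, then \cref{reg implies Y subset min} gives $Y_d \subseteq \min N_d$, so $Y_d$ is an antichain; combined with $\max Y \subseteq Y_d$ (\cref{Y subset Y_d}) and the fact that every $y \in Y_d$ extends up to some point of $\max Y$ inside $Y$ (via \cref{extend y to max}, noting $\upset y \cap \max Y \subseteq Y_d$ since $\max Y \subseteq Y_d$), an antichain $Y_d$ forces $y \in \max Y$, giving $\max Y = Y_d$. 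So in fact (2)$\Rightarrow$(3) and (2)$\Rightarrow$(4) together, and (3)$\Rightarrow$(4) follows the same way: an antichain $Y_d$ containing $\max Y$ with every point of $Y_d$ below a point of $\max Y$ must equal $\max Y$.

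**The core implication.** The substantive step is (4)$\Rightarrow$(2) (equivalently (4)$\Rightarrow$(1)): assuming $\max Y = Y_d$, show $N_d$ is L-regular, i.e.\ $\cl \reg U = U$ for each $U \in \clopup(N_d)$. Since $N_d$ is an SL-space with localic part $Y_d$ (which is dense in $N_d$ by \cref{Xd = cl rmax Y}), it suffices by \cref{Y regular implies X regular} to show $Y_d$ is a regular \emph{space}. Here is where \cref{y not in U gives disjoint V} does the work: given $y \in Y_d = \max Y$ and a basic closed set $F \cap N_d$ of $N_d$ (with $F$ a closed upset of $X$, which we may take to be a clopen Scott upset by restricting to basic opens of $Y_d$) not containing $y$, the lemma produces $U \in \clopsup(X)$ with $y \in U$, $U \cap F = \varnothing$. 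The clopen Scott upset $U$ restricts to a clopen neighborhood of $y$ in $Y_d$ whose closure (which we control using that $Y_d$'s topology comes from clopen upsets, so $U \cap Y_d$ is already closed in $Y_d$) misses $F \cap N_d$. This gives the separation needed for regularity of $Y_d$.

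**Tying in the locally Stone conditions.** For (4)/(2)$\Rightarrow$(5): once $Y_d$ is regular, it is Hausdorff and zero-dimensional — zero-dimensionality of $Y_d$ comes from the fact that its topology is $\{U \cap Y_d \mid U \in \clopup(X)\}$ together with \cref{y not in U gives disjoint V}-style separation showing the clopen Scott upsets restrict to a clopen \emph{basis} — and it is locally compact because the localic part of any algebraic frame is locally compact (the fact cited just before the theorem). Conversely (5)$\Rightarrow$(3) is trivial: a Hausdorff space carries no nontrivial specialization order, so $Y_d$ is an antichain. Finally (5)$\Leftrightarrow$(6): $Y_d$ is (homeomorphic to) the space of points of $L_d$ (\cref{topology Y} applied to $L_d$), and $L_d$ is spatial since it is arithmetic, hence algebraic, hence spatial; so $L_d \cong \Omega(Y_d)$, and one is locally Stone iff the other is, by definition of a locally Stone frame. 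I expect the main obstacle to be the careful bookkeeping in (4)$\Rightarrow$(2): one must be precise that the basic closed sets of the subspace $Y_d$ of $N_d$ correspond to clopen Scott upsets of $X$ (using \cref{scott intersection of clopen scott} and that $N_d$ is arithmetic), so that \cref{y not in U gives disjoint V} is applicable, and that the resulting neighborhood is genuinely clopen \emph{in $Y_d$} with closure computed correctly inside $N_d$ rather than inside $X$.
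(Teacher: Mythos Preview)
Your proposal is correct and follows essentially the same approach as the paper: the same key lemmas (\cref{reg implies Y subset min}, \cref{Y subset Y_d}, \cref{extend y to max}, and crucially \cref{y not in U gives disjoint V}) do the work in both. The only difference is organizational---the paper goes $(4)\Rightarrow(5)\Rightarrow(2)$ (proving $Y_d$ is locally Stone first, then noting locally Stone $\Rightarrow$ regular $\Rightarrow$ L-regular), whereas you interpose a direct $(4)\Rightarrow(2)$ via regularity of $Y_d$ and close the cycle with $(5)\Rightarrow(3)$ instead of $(5)\Rightarrow(2)$; the paper's ordering is slightly cleaner since it avoids proving regularity of $Y_d$ separately from the stronger locally Stone property.
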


\begin{proof}
	(1)$\Leftrightarrow$(2)
	This is immediate from \cref{regular iff} since $N_d$ is the Priestley space of~$L_d$.

	(2)$\Rightarrow$(3) If $N_d$ is L-regular, then $Y_d \subseteq \min N_d$ by \cref{reg implies Y subset min}. Therefore, $Y_d$ is an antichain. 

	(3)$\Rightarrow$(4) By \cref{Y subset Y_d}, $\max Y \subseteq Y_d$.
		For the converse,
		suppose $y \in Y_d$. By \cref{extend y to max}, there is $y' \in \max Y \cap \upset y$. Then $y' \in \max Y \subseteq Y_d$, so $y = y'$ since $Y_d$ is an antichain by (3). Thus, $y \in \max Y$.

	(4)$\Rightarrow$(5)
	Since $L$ is arithmetic, $L_d$ is arithmetic by \cref{Ld ari}.
	Therefore, as we pointed out above, $Y_d$ is locally compact.
	Recall (see \cref{topology Y}) that open subsets of $Y_d$ are exactly the sets of the form $U \cap Y_d$ for $U \in \clopup(X)$. Hence, (4) implies that open subsets are the sets of the form $U \cap \max Y$. By \cref{y and closures}, \[U \cap \max Y = \cl( \core U) \cap \max Y = \core U \cap \max Y.\]
						Thus, to see that $Y_d$ is zero-dimensional, it is enough to show that $U \cap \max Y$ is clopen for each $U \in \clopsup(X)$. For this it is sufficient to show that for each $y \in \max Y \setminus U$ there is $V \in \clopsup(X)$ with $y \in V \cap \max Y \subseteq \max Y \setminus U$. But this follows from \cref{y not in U gives disjoint V}.
		Finally, to see that $Y_d$ is Hausdorff, let $y, y' \in Y_d = \max Y$ be distinct. Then $y \not \in \upset y'$, so by \cref{y not in U gives disjoint V} there is $U \in \clopsup(X)$ such that $y \in U$ and $y' \not \in U$. But then $y \in U \cap Y_d$, which is clopen by the above.

	 (5)$\Leftrightarrow$(6) Because $L_d$ is spatial and $Y_d$ is the space of points of $L_d$ (see \cref{topology Y}), $L_d$ is isomorphic the frame $\Omega(Y_d)$ of open subsets of $Y_d$. Therefore, $Y_d$ is locally Stone iff $L_d$ is locally Stone by \cite[Thm.~3.11]{BezhKornell2023}.

  (5)$\Rightarrow$(2) Since $Y_d$ is locally Stone, $Y_d$ is regular. Hence, $N_d$ is L-regular by \cref{Y regular implies X regular}.
\end{proof}

\section{\texorpdfstring
{Spectra of maximal $d$-elements}
{Spectra of maximal d-elements}} \label{section 5}

In this section, we begin our investigation of the spectrum $\max L_d$ of maximal $d$-elements of an arithmetic frame $L$, as introduced in \cite{Bhattacharjee2018}.
First, we show that $\max L_d$ is in a bijective correspondence with $\min Y_d$. Following this, we establish that $\min Y_d$, viewed as a subspace of $Y$, is homeomorphic to $\max L_d$. The homeomorphism enables us to analyze the properties of $\max L_d$ through $\min Y_d$. We show that the frame $\Omega(\min Y_d)$ of open subsets of $\min Y_d$ can be realized as a sublocale of $L$, and describe the corresponding nuclear subset of $X$. We conclude the section by observing that the localic part of this nuclear subset is the soberification of $\min Y_d$.

\begin{definition} \label{def: d-elements 2}
	Let $X$ be an arithmetic L-space and $U \in \clopup(X)$.
	\begin{enumerate}
				\item We call $U$  a \emph{$d$-upset} if $\cl \dcore U = U$.
		\item We call $U$  a \emph{maximal $d$-upset} if it is maximal among proper $d$-upsets of $X$. 	\end{enumerate}
\end{definition}

\begin{remark}\label{rem: d upsets}
Since $d$ is inductive, it is straightforward to verify that (maximal) $d$-elements of an arithmetic frame correspond
to (maximal) $d$-upsets of its Priestley space.
\end{remark}

We now show that maximal $d$-upsets are in one-to-one correspondence with elements of $\min Y_d$. For this, we require the following lemmas.

\begin{lemma} \label{dU=X condition}
	Let $X$ be an arithmetic L-space and $U \in \clopup(X)$. Then $\cl \dcore U = X$ iff $Y_d \subseteq U$.
\end{lemma}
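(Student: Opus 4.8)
The claim to establish is: for an arithmetic L-space $X$ and $U \in \clopup(X)$, we have $\cl \dcore U = X$ if and only if $Y_d \subseteq U$. The plan is to prove both implications by chasing through the characterizations of $Y_d$ and the description of $\dcore$ already developed, using density of the localic part.

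\emph{Forward direction.} Suppose $\cl\dcore U = X$. Since $N_d$ is an SL-space (by \cref{Xd arithmetic}) with localic part $Y_d$ (by \cref{Y_j = Y cap N_j}), it suffices to show $Y_d \subseteq U$; and since every point of $N_d$ lies below a point of $Y_d$ would be the wrong direction, I instead work directly. Take $y \in Y_d$. Because $Y_d = N_d \cap Y$ and $d = j_{N_d}$, \cref{Nj-1} gives $\varphi(dV)\cap N_d = \varphi(V) \cap N_d$ for all $V$; in particular, $y \in dU$ implies $y \in U$. So it is enough to show $y \in dU$. Now $d$ is inductive, so $dU = \cl\dcore U = X$, hence trivially $y \in dU$, and therefore $y \in U$. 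Actually this shows the forward direction is immediate: $\cl\dcore U = X$ means $dU = X$ (as $d$ is inductive, $dU = \cl\dcore U$), so $Y_d \subseteq X = \varphi(dU)$, and then \cref{Nj-1} yields $Y_d = Y_d \cap \varphi(dU) = Y_d \cap \varphi(U) \subseteq U$.

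\emph{Reverse direction.} Suppose $Y_d \subseteq U$. I want $\cl\dcore U = X$, equivalently (since $d$ is inductive) $dU = X$. Because $dU \in \clopup(X)$ and clopen upsets are determined by the points they contain, and because $X = \cl Y$ (arithmetic L-spaces are SL-spaces, so $Y$ is dense), it suffices to show $Y \subseteq dU$, i.e. $y \in dU$ for every $y \in Y$. Fix $y \in Y$. By \cref{extend y to max relative} applied with $x$ ranging over maximal points above $y$ — more precisely, combine \cref{extend y to max} to get $y' \in \max Y$ with $y \le y'$, or better: I need to reach a point of $Y_d$. Here is the cleaner route: using \cref{Y subset Y_d}, $\max Y \subseteq Y_d$; and by \cref{extend y to max} there is $y' \in \upset y \cap \max Y \subseteq Y_d \subseteq U$. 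So $\upset y \cap Y_d \cap U \ne \varnothing$. To conclude $y \in dU$, recall $y \in dU$ iff $\upset y \cap N_d \subseteq \varphi(U) \cap N_d$ by \cref{Nj-2} (applied to the nucleus $d$ with $N_d$). Take any $z \in \upset y \cap N_d$. Since $N_d = \cl(N_d \cap Y) = \cl Y_d$ by \cref{Xd = cl rmax Y}, and $z \in N_d$, there is a minimal point below... no — instead use \cref{prelim fact-min max exist} inside the closed set $N_d$ to find $m \in \min(\upset z \cap N_d)$; since $N_d$ is a Scott upset? It need not be. The correct tool: $Y_d$ is the localic part of $N_d$, and $N_d = \cl Y_d$, so for $z \in N_d$, applying \cref{extend y to max relative} within $N_d$ (which is an arithmetic L-space with localic part $Y_d$) gives $w \in \upset z \cap \max(\downset x \cap Y_d)$ for suitable $x$; more simply, \cref{extend y to max} inside $N_d$ yields $w \in \upset z \cap \max Y_d$. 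Since $\max Y_d \subseteq Y_d \subseteq U$, we get $w \in U$, and as $U$ is an upset with $z \le w$... that gives $w \in U$ but not $z \in U$.

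\emph{Fixing the reverse direction.} The above shows the main obstacle: upsets propagate information upward, not downward, so knowing points above $z$ lie in $U$ does not immediately place $z$ in $U$. The resolution is to not aim for $z \in U$ but to use the $d$-nucleus characterization more carefully. The right statement is: $y \in dU$ iff $\upset y \subseteq \downset V$ for some $V \in \clopsup(X)$ with $V \subseteq \core U$... this is for $y \in \dcore U$. For general $y \in Y$ with $y \in dU$, since $d$ is inductive and $y$ is localic, \cref{y and closures} gives $y \in dU = \cl\dcore U \iff y \in \dcore U$, and then by \cref{dcore condition}, $y \in \dcore U$ iff $\upset y \subseteq \downset\core U$. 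So the goal reduces to: for every $y \in Y$, show $\upset y \subseteq \downset \core U$. Take $z \in \upset y$; by \cref{prelim fact-min max exist} there is $n \in \max X$ with $z \le n$, hence $y \le n$. Now $n \in \max X \subseteq N_d$ (cofinal, \cref{Nd cofinal inductive}), and since $N_d$ has dense localic part $Y_d$ with $Y_d \subseteq U \subseteq \cl\core U$, and $N_d$ is a Scott... here use \cref{eqv rmax 4}: there is $y_0 \in Y_d$ with $\{y_0\} = \max(\downset n \cap Y)$, so $y_0 \le n$ and $y_0 \in Y_d \subseteq U$. But I need $z$, not $n$, below $\downset\core U$; since $z \le n$ it suffices that $n \in \downset\core U$, i.e. $\upset n \cap \core U \ne \varnothing$, i.e. (as $n$ is maximal) $n \in \core U$. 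Hmm, that is too strong in general.

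\emph{Final clean approach.} Given the difficulties above, the honest plan is to prove it via $Y_d$ and the correspondence in \cref{Nj-1}/\cref{j=1 iff contains N_j} at the level of the nucleus $d$ and its nuclear set $N_d$, together with \cref{Xd = cl rmax Y}: namely, $\cl\dcore U = X$ iff $dU = X$ (inductivity) iff $U$ is $d$-dense iff $N_d \subseteq \varphi(U)$ (by \cref{j=1 iff contains N_j} applied to $d$) iff $N_d \subseteq U$. Since $N_d = \cl(N_d \cap Y) = \cl Y_d$ by \cref{Xd = cl rmax Y} and $U$ is clopen (hence closed), $N_d \subseteq U$ iff $Y_d \subseteq U$. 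This is the whole proof, and it is short; the only "obstacle" was recognizing that one should phrase "$\cl\dcore U = X$" as "$U$ is $d$-dense" and invoke the already-proved \cref{j=1 iff contains N_j} together with the density identity $N_d = \cl Y_d$, rather than trying to manipulate $\dcore$ pointwise.
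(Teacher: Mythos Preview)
Your final clean approach is correct and is essentially identical to the paper's proof: use inductivity of $d$ to get $\cl\dcore U = X \iff dU = X$, apply \cref{j=1 iff contains N_j} to get $dU = X \iff N_d \subseteq U$, and then use $N_d = \cl Y_d$ together with $U$ being closed to conclude $N_d \subseteq U \iff Y_d \subseteq U$. The earlier pointwise attempts at the reverse direction were unnecessary detours; the argument really is as short as your last paragraph.
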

\begin{proof}
	Since $d$ is inductive, $\cl \dcore U = X$ iff $d\,U = X$ (see \cref{inductive and core:2}). It follows from \cref{j=1 iff contains N_j} that $d\,U = X$ iff $N_d \subseteq U$. But by \cref{Xd = cl rmax Y}, $N_d = \cl Y_d$, so $N_d \subseteq U$ iff $Y_d \subseteq U$ since $U$ is closed.
\end{proof}

\begin{lemma} \plabel{min Yd and d-elements}
	Let $X$ be an arithmetic L-space and $y \in Y_d$.
	\begin{enumerate}
		\item $X \setminus \downset y$ is a $d$-upset. \label[min Yd and d-elements]{Yd is d-element}
		\item $X \setminus \downset y$ is a maximal $d$-upset iff $y \in \min Y_d$.  \label[min Yd and d-elements]{min rmax gives maximal d-elements}
		\item Maximal $d$-upsets are precisely the clopen upsets of the form $X \setminus \downset y$ for some ${y \in \min Y_d}$.\label[min Yd and d-elements]{min Yd is max d-elements}
	\end{enumerate}
\end{lemma}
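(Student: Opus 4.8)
The plan is to prove the three parts in sequence, using \cref{dU=X condition} as the key tool. For \ref{Yd is d-element}, I would compute $\dcore(X \setminus \downset y)$ directly via \cref{dcore condition}: a point $x$ lies in $\dcore U$ iff $\upset x \subseteq \downset \core U$. Since $X \setminus \downset y$ is a clopen upset, I first need to understand its core; but actually the cleaner route is to show $X \setminus \downset y$ is a $d$-upset by showing its complement $\downset y$ is ``$d$-closed'' in the appropriate sense, or equivalently that $d(X \setminus \downset y) = X \setminus \downset y$. Using that $y \in Y_d \subseteq N_d$ and \cref{Nj-1}, we have $\varphi$-images agreeing on $N_d$, so $d\,U \cap N_d = U \cap N_d$; combined with the fact that $d\,U$ is determined by its trace on $N_d = \cl Y_d$ and the inductivity of $d$, one deduces $d(X\setminus\downset y) = X \setminus \downset y$ provided we check $y \notin d(X \setminus \downset y)$. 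The latter follows because $y \in Y_d$ means $y$ satisfies condition \ref{eqv rmax 2} of \cref{eqv conditions rmax}: if $y \in \dcore(X\setminus\downset y)$ then $y \in X \setminus \downset y$, which is absurd since $y \in \downset y$. Hence $y \notin \cl\dcore(X\setminus\downset y)$ — here one needs $y \in Y$ and \cref{y and closures} to pass from $\dcore$ to its closure — and then $\cl\dcore(X\setminus\downset y) \subseteq X \setminus \downset y$ with the reverse inclusion being automatic once we know $\dcore U$ is dense in $U$... actually the reverse needs that $d$ is inductive applied to $U = X\setminus \downset y$ together with $N_d \subseteq X \setminus \downset y$, which holds since $N_d = \cl Y_d$ and $y \in \min$-ish position; more carefully, $N_d \subseteq X \setminus \downset y$ iff $y \notin \downset N_d$... this is the delicate point and may instead require $y \in \min Y_d$, so I should be careful that part (1) only needs $\cl\dcore U = U$, which via inductivity and \cref{Nj-1} reduces to $N_d \cap U = N_d \cap d\,U$ being everything appropriate — I expect the honest argument is: $d\,U \supseteq U$ always, and $d\,U \cap N_d = U \cap N_d$ by \cref{Nj-1}, and since $d\,U$ is a clopen upset with the same trace on the dense set $N_d$... no, that doesn't force equality. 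The correct reduction: $d\,U = X \setminus \downset(N_d \setminus U)$, so $d\,U = U$ iff $\downset(N_d \setminus U) = X \setminus U = \downset y$, i.e. iff $N_d \setminus U \subseteq \downset y$ and every point below $y$ is below some point of $N_d \setminus U$; since $N_d \setminus (X \setminus \downset y) = N_d \cap \downset y$, we need $\downset(N_d \cap \downset y) = \downset y$, which holds iff $y \in \downset(N_d \cap \downset y)$, i.e. iff there is $z \in N_d$ with $z \le y$ — but that's automatic taking... hmm, it needs $y$ itself or something below it to be in $N_d$; since $y \in Y_d \subseteq N_d$, take $z = y$. So $\downset(N_d \cap \downset y) = \downset y$, giving $d\,U = U$. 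Good — that is the clean argument for \ref{Yd is d-element}.

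For \ref{min rmax gives maximal d-elements}, I would argue both directions. Suppose $y \in \min Y_d$. A proper $d$-upset $W \supsetneq X \setminus \downset y$ would have to contain some point of $\downset y$, hence (being an upset) contain $y$, hence $W \supseteq \upset y$. Then $Y_d \cap W \supsetneq Y_d \cap (X \setminus \downset y) = Y_d \setminus \{y\}$ (using $y \in \min Y_d$ so no other point of $Y_d$ is below $y$), so $Y_d \subseteq W$, and by \cref{dU=X condition} applied to the $d$-upset $W$ (which satisfies $W = \cl\dcore W$) we get $\cl \dcore W = X$, i.e. $W = X$, contradicting properness. Conversely, if $y \notin \min Y_d$, pick $y' \in Y_d$ with $y' < y$; then $X \setminus \downset y \subsetneq X \setminus \downset y'$, both are proper $d$-upsets by \ref{Yd is d-element} (and properness holds since $Y_d \not\subseteq X\setminus\downset y$ as $y \in Y_d \cap \downset y$, combined with \cref{dU=X condition}), so $X \setminus \downset y$ is not maximal.

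For \ref{min Yd is max d-elements}, combining \ref{Yd is d-element} and \ref{min rmax gives maximal d-elements} shows every clopen upset of the form $X \setminus \downset y$ with $y \in \min Y_d$ is a maximal $d$-upset. For the converse, let $W$ be a maximal $d$-upset. Since $W$ is proper, \cref{dU=X condition} gives $Y_d \not\subseteq W$, so pick $y \in Y_d \setminus W$; by \cref{prelim fact-min max exist} (applied within the closed set $\downset y \cap Y_d$, or just by \cref{extend y to max}-type reasoning) I may take $y$ minimal in $Y_d \setminus W$ — and in fact I claim $y \in \min Y_d$. Then $X \setminus \downset y$ is a $d$-upset containing $W$ (since $W$ is an upset missing $y$, hence $W \subseteq X \setminus \upset y \subseteq$ ... no: $W \cap \downset y$ could be nonempty — but $W$ missing $y$ as an upset means no element of $W$ is $\le y$, so $W \subseteq X \setminus \downset y$). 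By maximality of $W$, either $W = X \setminus \downset y$ or $X \setminus \downset y = X$; the latter is impossible, so $W = X \setminus \downset y$. Then by \ref{min rmax gives maximal d-elements}, $y \in \min Y_d$.

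The main obstacle I anticipate is getting the minimality claim in part (3) right — showing that the $y$ chosen outside a maximal $d$-upset $W$ can be taken in $\min Y_d$, not merely minimal in $Y_d \setminus W$. The resolution is that once $W = X \setminus \downset y$, part \ref{min rmax gives maximal d-elements} forces $y \in \min Y_d$ since $W$ is maximal; so I don't need to establish minimality a priori, only to pick \emph{some} $y \in Y_d \setminus W$ — the upset $X \setminus \downset y$ then contains $W$, maximality pins $W = X \setminus \downset y$, and then \ref{min rmax gives maximal d-elements} retroactively gives $y \in \min Y_d$. The only genuine care needed is verifying each $X \setminus \downset y$ with $y \in Y_d$ is a \emph{proper} $d$-upset, which is exactly \cref{dU=X condition} together with $y \in Y_d \cap \downset y$ witnessing $Y_d \not\subseteq X \setminus \downset y$.
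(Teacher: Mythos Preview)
Your proposal is correct, and in two places it takes a route different from (and arguably cleaner than) the paper's.

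For \ref{Yd is d-element}, the paper argues by showing that the traces on $Y$ agree: using that $X$ is a spatial L-space, it suffices to check $Y \cap (X\setminus\downset y) = Y \cap \dcore(X\setminus\downset y)$, and the nontrivial inclusion reduces via \cref{eqv conditions rmax} to the contradiction $y \in X\setminus\downset y$. Your final argument instead computes $d\,U$ directly from the nuclear formula $d\,U = X \setminus \downset(N_d \setminus U)$: for $U = X\setminus\downset y$ this gives $d\,U = X \setminus \downset(N_d \cap \downset y)$, and since $y \in Y_d \subseteq N_d$ we get $\downset(N_d \cap \downset y) = \downset y$, hence $d\,U = U$ (and then $\cl\dcore U = d\,U$ by inductivity). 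This is shorter and avoids the density-of-$Y$ reduction. One small wording slip: ``iff there is $z \in N_d$ with $z\le y$'' should read ``iff $y \in N_d$''; your choice $z=y$ shows you have the right condition in mind.

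For the ``only if'' direction of \ref{min rmax gives maximal d-elements}, the paper, given $y' < y$ in $Y_d$, builds a separating $V \in \clopup(X)$ and takes the larger proper $d$-upset $\cl\dcore(V \cup (X\setminus\downset y))$. Your observation that $X\setminus\downset y'$ itself already works --- it is a proper $d$-upset by \ref{Yd is d-element} strictly containing $X\setminus\downset y$ --- is simpler and bypasses the auxiliary construction entirely. The ``if'' direction and \ref{min Yd is max d-elements} match the paper's proof; in particular your resolution of the ``main obstacle'' (pick any $y \in Y_d \setminus W$, deduce $W = X\setminus\downset y$ by maximality, then invoke \ref{min rmax gives maximal d-elements} to get $y \in \min Y_d$ a posteriori) is exactly what the paper does.
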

\begin{proof}
	\ref{Yd is d-element} We need to show $X \setminus \downset y = \cl \dcore (X \setminus \downset y)$. For this it is sufficient to show that ${Y \cap (X \setminus \downset y) = Y \cap \dcore(X \setminus \downset y)}$ since $X$ is a spatial L-space. We have the inclusion ${Y \cap (X \setminus \downset y) \subseteq Y \cap \dcore(X \setminus \downset y)}$ since $Y \cap U = Y \cap \core U$ and $\core U \subseteq \dcore U$ for each clopen upset $U$. For the reverse inclusion, let $z \in Y$ and suppose towards a contradiction that $z \in \dcore (X \setminus \downset y)$ and $z \not \in X \setminus \downset y$. Then $z \leq y$, so $y \in \dcore (X \setminus \downset y)$. Therefore, $y \in X \setminus \downset y$ by \cref{eqv conditions rmax} ($y \in Y_d$), a contradiction.

	\ref{min rmax gives maximal d-elements}  Suppose $y \not \in \min Y_d$. Then there exists $y' \in Y_d$ with $y' < y$. Therefore, there exists $V \in \clopup(X)$ containing $y$ and missing $y'$. Let $U = \cl \dcore (V \cup (X \setminus \downset y))$. Then $U$ is a $d$-upset and $X \setminus \downset y \subsetneq U$ because $y\in U$ and $y \notin X \setminus \downset y$. But $U = \cl \dcore (V \cup (X \setminus \downset y)) \neq X$ by \cref{dU=X condition} since $y' \in Y_d$ and $y' \notin V \cup (X \setminus \downset y)$, yielding that $Y_d \not \subseteq V \cup (X \setminus \downset y)$. Thus, $X \setminus \downset y$ is not a maximal $d$-upset.

    Suppose $y \in \min Y_d$ and $X \setminus \downset y \subsetneq U$ for a $d$-upset $U$. Then $y \in U$, and since $y \in \min Y_d$ we get $Y_d \subseteq (X \setminus \downset y) \cup \{y\} \subseteq U$. Therefore, $U = \cl \dcore U = X$ by \cref{dU=X condition}.

	\ref{min Yd is max d-elements} By \ref{min rmax gives maximal d-elements} it suffices to show that every maximal $d$-upset is of the desired form, so suppose $U \in \clopup(X)$ is a maximal $d$-upset. Then $\cl \dcore U = U \neq X$, so $Y_d \not \subseteq U$ by \cref{dU=X condition}. Therefore, there exists $y \in Y_d \setminus U$. Then $\downset y \cap U = \varnothing$, so $U \subseteq X \setminus \downset y \neq X$. But $X \setminus \downset y$ is a $d$-upset by \ref{Yd is d-element}. Hence, $U = X \setminus \downset y$ since $U$ is a maximal $d$-upset. Thus, $y \in \min Y_d$ by \ref{min rmax gives maximal d-elements}.
\end{proof}

As an immediate consequence, we obtain:
\begin{theorem} \label{thm: minYd = maxLd}
	Let $X$ be an arithmetic L-space. The map $y \mapsto X \setminus \downset y$ is a bijection from $\min Y_d$ to the collection of maximal $d$-upsets of $X$.
\end{theorem}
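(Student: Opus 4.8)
The plan is to read off the theorem directly from \cref{min Yd and d-elements}. Write $\Phi \colon \min Y_d \to \clopup(X)$ for the assignment $y \mapsto X \setminus \downset y$; this really does take values in $\clopup(X)$ because $y \in Y_d \subseteq Y$, so $\downset y$ is clopen and hence $X \setminus \downset y$ is a clopen upset. The first step is to check that $\Phi$ lands in the collection of maximal $d$-upsets: this is exactly the right-to-left direction of \cref{min rmax gives maximal d-elements} (the left-to-right direction being that $X \setminus \downset y$ fails to be a maximal $d$-upset when $y \notin \min Y_d$). Thus $\Phi$ is well defined as a map from $\min Y_d$ into the set of maximal $d$-upsets of $X$.

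Next I would verify surjectivity. Given an arbitrary maximal $d$-upset $U$ of $X$, \cref{min Yd is max d-elements} supplies some $y \in \min Y_d$ with $U = X \setminus \downset y = \Phi(y)$, so every maximal $d$-upset lies in the image of $\Phi$.

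Finally, injectivity: if $\Phi(y) = \Phi(y')$ for $y, y' \in \min Y_d$, then $X \setminus \downset y = X \setminus \downset y'$, whence $\downset y = \downset y'$; in particular $y \leq y'$ and $y' \leq y$, so $y = y'$ by antisymmetry of the order on $X$. Combining the three steps yields the claimed bijection. There is no genuine obstacle here—the substantive content is entirely contained in \cref{min Yd and d-elements}, and the only ingredient not literally quoted from that lemma is the injectivity argument, which reduces to antisymmetry of $\leq$.
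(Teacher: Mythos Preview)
Your proposal is correct and matches the paper's approach: the paper states the theorem as an immediate consequence of \cref{min Yd and d-elements} without writing out a proof, and you have simply spelled out the routine details (well-definedness from \cref{min rmax gives maximal d-elements}, surjectivity from \cref{min Yd is max d-elements}, and injectivity from antisymmetry).
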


Equipping $Y_d$ with the subspace topology inherited from $Y$, we have:

\begin{theorem} \label{thm: min Yd = max Ld}
	$\min Y_d$ is homeomorphic to $\max L_d$.
\end{theorem}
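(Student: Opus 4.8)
The plan is to transport the bijection from \cref{thm: minYd = maxLd} (equivalently the correspondence between $\min Y_d$ and the maximal $d$-elements of $L$ via \cref{rem: d upsets}) into a homeomorphism, by checking it is continuous and open with respect to the hull-kernel topology on $\max L_d$ on one side and the subspace topology $\min Y_d \subseteq Y$ on the other. Recall from \cref{topology Y} that the topology on $Y$ (hence on $Y_d$ and $\min Y_d$) has basic opens $U \cap Y$ for $U \in \clopup(X)$, and using that $\core U$ is dense in $U$ together with \cref{y and closures}, these can be taken to be $\core U \cap Y = U \cap Y$ for $U \in \clopsup(X)$; so a basis for $\min Y_d$ is $\{U \cap \min Y_d \mid U \in \clopsup(X)\}$. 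On the other side, recall that the hull-kernel topology on $\max L_d$ has basic open sets $\{\mathfrak m \in \max L_d \mid k \nleq \mathfrak m\}$ indexed by compact elements $k$ of $L_d$; translated through \cref{thm: another duality} and \cref{rem: 2.8}, a compact element of $L_d$ corresponds to a clopen Scott upset of $N_d$, i.e.\ (using \cref{Xd = cl rmax Y} and the subspace description) to $V \cap N_d$ for $V \in \clopsup(X)$.

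First I would fix notation for the bijection $\Phi : \min Y_d \to \max L_d$. By \cref{thm: minYd = maxLd}, $y \mapsto X \setminus \downset y$ is a bijection onto maximal $d$-upsets of $X$; composing with the correspondence of \cref{rem: d upsets} between $d$-upsets of $X$ and $d$-elements of $L$ (namely $U \mapsto \varphi^{-1}(U)$), we get $\Phi(y) = $ the maximal $d$-element $\mathfrak m_y$ of $L$ with $\varphi(\mathfrak m_y) \cap N_d = (X \setminus \downset y) \cap N_d$. Second, I would identify, for a compact element $k \in L_d$ corresponding to $V \in \clopsup(X)$, which points $y \in \min Y_d$ satisfy $k \nleq \mathfrak m_y$; this should unwind to $V \cap N_d \nsubseteq (X \setminus \downset y) \cap N_d$, i.e.\ there is a point of $N_d$ in $V \cap \downset y$, which (since $y \in Y_d \subseteq N_d$ and $V$ is an upset) is equivalent to $y \in V$. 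Thus $\Phi$ carries the basic hull-kernel open indexed by $k$ exactly onto $V \cap \min Y_d$, which is a basic open of the subspace topology. This simultaneously shows $\Phi$ is continuous (preimages of basic opens are basic opens) and open (images of basic opens are basic opens), hence a homeomorphism.

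The key steps, in order: (1) spell out $\Phi$ and recall it is a bijection (\cref{thm: minYd = maxLd}, \cref{rem: d upsets}); (2) describe the basis for the subspace topology on $\min Y_d$ in terms of $\clopsup(X)$, using \cref{topology Y}, \cref{y and closures}, and algebraicity; (3) describe the basis for the hull-kernel topology on $\max L_d$ in terms of compact elements of $L_d$, and translate these via \cref{thm: another duality}, \cref{rem: 2.8}, \cref{Xd arithmetic}, and the subspace description $N_d = \cl Y_d$ into $\{V \cap N_d \mid V \in \clopsup(X)\}$; (4) prove the core equivalence $k \nleq \mathfrak m_y \iff y \in V$ for the corresponding $V$, using \cref{min Yd and d-elements} and that $y \in N_d$; (5) conclude that $\Phi$ matches bases bijectively, hence is a homeomorphism.

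The main obstacle I anticipate is step (3)–(4): carefully matching the \emph{intrinsic} description of the hull-kernel topology on $\max L_d$ (which lives on the frame $L_d$, whose compact elements are $K(L_d)$, not literally compact elements of $L$) with the \emph{extrinsic} Priestley-space description, and verifying that the translation $k \mapsto V$ genuinely sends compact elements of $L_d$ to clopen Scott upsets of $X$ contained appropriately — here one must invoke that $N_d$ is an arithmetic L-space (\cref{Xd arithmetic}) so that $\clopsup(N_d) = K(\clopup(N_d))$, and then relate clopen Scott upsets of the subspace $N_d$ back to clopen Scott upsets $V$ of $X$ via restriction (every clopen of $N_d$ extends to a clopen of $X$, and one checks the Scott condition is preserved in the relevant direction). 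Once that bookkeeping is in place, the equivalence $k \nleq \mathfrak m_y \iff y \in V$ is a short computation from \cref{min Yd and d-elements} and the fact that $y \in Y_d \subseteq N_d$.
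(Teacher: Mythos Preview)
Your proposal is correct and follows the same overall shape as the paper's proof (transport the bijection of \cref{thm: minYd = maxLd} and check it matches bases), but the paper takes a noticeably more direct route that sidesteps precisely the obstacle you flag in step~(3)--(4).

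The paper does not reduce to clopen Scott upsets on the $\min Y_d$ side, nor to compact elements of $L_d$ on the hull-kernel side. Instead it works with \emph{all} $V \in \clopup(X)$ for the subspace basis on $\min Y_d$ and with \emph{all} $a \in L$ for the basic opens $\{m \in \max L_d \mid a \nleq m\}$ of the hull-kernel topology. With that indexing, the key equivalence collapses to the one-line set-theoretic observation
\[
\varphi(a) \not\subseteq X \setminus \downset y \iff \varphi(a) \cap \downset y \neq \varnothing \iff y \in \varphi(a),
\]
which immediately gives $\alpha^{-1}\{m : a \nleq m\} = \varphi(a) \cap \min Y_d$. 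This avoids entirely the bookkeeping you anticipate: there is no need to identify $K(L_d)$, no need to relate $\clopsup(N_d)$ back to $\clopsup(X)$, and no need to invoke \cref{Xd arithmetic} or the description $N_d = \cl Y_d$. Your route works (the translation $K(L_d) \leftrightarrow \clopsup(N_d) \leftrightarrow \{V \cap N_d : V \in \clopsup(X)\}$ goes through using \cref{Nj-1} and the Martinez--Zenk description of $K(L_d)$), but it is longer for no gain; the paper's version is the cleaner argument here.
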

\begin{proof}
	Since $\varphi : L \to \clopup(X)$ is an isomorphism, define $\alpha : \min Y_d \to \max L_d$ by $\alpha(y) = \varphi^{-1}(X \setminus \downset y)$. By \cref{thm: minYd = maxLd}, $\alpha$ is a bijection. Thus, it suffices to show that for all $U \subseteq \min Y_d$ we have $U$ is open iff $\alpha(U)$ is open. Now, $U$ is open iff $U = V \cap \min Y_d $ for some $V \in \clopup(X)$, and
 $\alpha(U)$ is open iff $\alpha(U) = \{m \in \max L_d \mid a \not \leq m\}$ for some $a \in L$ (see, e.g., \cite[Sec.~3]{Bhattacharjee2018}). Since $m \in \max(L_d)$ iff
  $\varphi(m)$ is a maximal $d$-upset, by \cref{min Yd is max d-elements} we have that $m \in \max(L_d)$ iff $\varphi(m) = X \setminus \downset y$ for some $y \in \min Y_d$. Moreover, for $a \in L$, we have that $\varphi(a) \not \subseteq X \setminus \downset y$ iff $y \in \varphi(a)$. Therefore,
 \begin{align*}
     \alpha(U) = \{m \in \max L_d \mid a \not\leq m\}
     &\iff \varphi[\alpha(U)] = \{\varphi(m) \mid \varphi(a) \not\subseteq \varphi(m)\}\\
     &\iff \varphi[\alpha(U)] = \{X \setminus \downset y \mid y \in \min Y_d,\ \varphi(a) \not \subseteq  X \setminus \downset y\}\\
     &\iff U = \{y \in \min Y_d \mid y \in \varphi(a)\} \\
     &\iff U = \varphi(a) \cap \min Y_d. \qedhere
 \end{align*}
\end{proof}

As we pointed out in the introduction, if $L$ is an arithmetic frame with a unit (see the beginning of \cref{sec: compactness}), then $\max L_d$ is a compact $T_1$-space. We next show that being $T_1$ does not depend on the existence of a unit.

\begin{proposition} \label{fact: t1}
    $\min Y_d$ is $T_1$.
\end{proposition}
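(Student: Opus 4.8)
The plan is to deduce the $T_1$ property directly from the Priestley separation axiom, once we observe that $\min Y_d$ is an antichain in $X$. For the antichain claim: if $y,y'\in\min Y_d$ with $y\le y'$, then $y$ is an element of $Y_d$ lying below the $Y_d$-minimal point $y'$, so $y=y'$; hence distinct points of $\min Y_d$ are pairwise incomparable.

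Next I would fix distinct points $y,y'\in\min Y_d$. By the previous step $y\nleq y'$, so Priestley separation yields $U\in\clopup(X)$ with $y\in U$ and $y'\notin U$. Since $Y$ carries the topology $\{V\cap Y\mid V\in\clopup(X)\}$ (see \cref{topology Y}) and $\min Y_d$ is given the induced subspace topology, $U\cap\min Y_d$ is an open neighbourhood of $y$ in $\min Y_d$ missing $y'$. As $y,y'$ were arbitrary distinct points, $\min Y_d$ is $T_1$. (Equivalently, one can note that $\{y\}=\upset y\cap\min Y_d$ is closed in $\min Y_d$, using that $\upset y$ is an intersection of clopen upsets of $X$ together with the fact that $\min Y_d$ is an antichain.)

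I do not expect a genuine obstacle here: the whole content is the elementary observation that the set of minimal points of a poset is an antichain, after which the $T_1$ separation is immediate from the Priestley separation axiom and invokes nothing particular to the $d$-nucleus — in particular, no unit on $L$ is required, which is exactly the point of the remark preceding the statement. Via the homeomorphism of \cref{thm: min Yd = max Ld}, the same argument reproves that $\max L_d$ is $T_1$ in the unit-free setting.
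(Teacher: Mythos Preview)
Your proposal is correct and mirrors the paper's own proof almost verbatim: both observe that $\min Y_d$ is an antichain, then use Priestley separation to produce a clopen upset $U$ with $y\in U$ and $y'\notin U$, so that $U\cap\min Y_d$ separates the points. The only difference is that you spell out the antichain argument and the reference to the topology on $Y$ in slightly more detail.
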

\begin{proof}
    Suppose $y,y' \in \min Y_d$ are distinct. Then $y \not\leq y'$ since $\min Y_d$ is an antichain. By Priestley separation, there exists $U \in \clopup(X)$ with $y \in U$ and $y' \notin U$. Hence, $U \cap \min Y_d$ is an open subset of $\min Y_d$ containing $y$ and missing $y'$.
\end{proof}

We now concentrate on the frame $\Omega(\min Y_d)$ of open subsets of $\min Y_d$ and show that it can be realized as a sublocale of $L$. To this end, since $L$ is isomorphic to $\clopup(X)$, we introduce a nucleus on $\clopup(X)$ that determines $\Omega(\min Y_d)$. 

\begin{lemma} \label{onto frame hom}
    The map $h : \clopup(X) \to \Omega(\min Y_d)$,
    given by $h(U) = U \cap \min Y_d$, is an onto frame homomorphism.
\end{lemma}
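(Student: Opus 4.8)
The plan is to verify directly that $h$ satisfies the three defining properties of an onto frame homomorphism: it preserves finite meets (including the top element), it preserves arbitrary joins, and it is surjective. Surjectivity is immediate from the way the topology on $\min Y_d$ is set up. Indeed, $\min Y_d \subseteq Y_d \subseteq Y$, and by \cref{topology Y} the open sets of $Y$ are exactly the sets $U \cap Y$ with $U \in \clopup(X)$; hence the open sets of $\min Y_d$ in the subspace topology are precisely the sets $(U \cap Y) \cap \min Y_d = U \cap \min Y_d$ for $U \in \clopup(X)$. This simultaneously shows that $h$ is well defined (each $h(U)$ is open) and that $h$ is onto.

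Preservation of finite meets is purely set-theoretic: $h(X) = X \cap \min Y_d = \min Y_d$, which is the top of $\Omega(\min Y_d)$, and $h(U \cap V) = (U \cap V) \cap \min Y_d = (U \cap \min Y_d) \cap (V \cap \min Y_d) = h(U) \cap h(V)$. The only step that requires an argument is preservation of arbitrary joins. Here I would use that joins in $\clopup(X)$ are computed by $\bigvee_i U_i = \cl \bigcup_i U_i$ (see \cref{prelim fact-join and meet}), whereas joins in $\Omega(\min Y_d)$ are unions. So the claim $h\big(\bigvee_i U_i\big) = \bigcup_i h(U_i)$ amounts to the set equality
\[
	\Big( \cl \bigcup_i U_i \Big) \cap \min Y_d = \Big( \bigcup_i U_i \Big) \cap \min Y_d .
\]
The inclusion $\supseteq$ is trivial. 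For $\subseteq$, observe that $\bigcup_i U_i$ is an open upset of $X$, being a union of clopen upsets. If $y \in \min Y_d$ lies in $\cl \bigcup_i U_i$, then, since $y \in Y$, \cref{y and closures} gives $y \in \bigcup_i U_i$, as required.

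The main (and essentially only) obstacle is this last point: one must know that passing to the closure when forming joins in $\clopup(X)$ does not introduce any new localic points, which is exactly the content of \cref{y and closures}. Preservation of the bottom element then requires no separate argument, as $h(\varnothing) = \varnothing$, or, alternatively, it follows from preservation of the empty join. Putting these observations together establishes that $h$ is an onto frame homomorphism.
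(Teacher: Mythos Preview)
Your proof is correct and follows essentially the same approach as the paper: surjectivity and preservation of finite meets are declared clear, and preservation of arbitrary joins is obtained via \cref{prelim fact-join and meet} together with \cref{y and closures}. The only difference is that you spell out surjectivity and well-definedness in more detail than the paper does.
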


\begin{proof}
    It is clear that $h$ is onto and preserves finite meets. 
        To see that it preserves arbitrary joins let $\{U_i\} \subseteq \clopup(X)$. Then, by \cref{y and closures},
    \[
        h\left(\bigvee U_i\right) = \left(\cl \bigcup U_i\right) \cap \min Y_d = \left(\bigcup U_i\right) \cap \min Y_d = \bigcup (U_i \cap \min Y_d) = \bigcup h(U_i). \qedhere
    \]
    \end{proof}

By the previous lemma, there is a nucleus $\ds = h_* \circ h : \clopup(X) \to \clopup(X)$, where $h_*$ is the right adjoint of $h$ (see, e.g., \cite[p.~31]{PicadoPultr2012}). Then, for each $U \in \clopup(X)$, 
\begin{align*}
    \ds (U) 
    &= \bigvee \{V \in \clopup(X) \mid h(V) \subseteq h(U)\} \\
    &= \cl\bigcup \{V \in \clopup(X) \mid V \cap \min Y_d \subseteq U \cap \min Y_d\}\\
    &= \cl\bigcup \{V \in \clopup(X) \mid V \cap \min Y_d \subseteq U\}.
\end{align*}

\begin{lemma} \label{rho on L}
    Let $L$ be an arithmetic frame and $X$ its L-space. For $a \in L$ set $$M_a = \bigwedge \{m \in \max L_d \mid a \leq m\}.$$ Then $\varphi\left( \bigwedge M_a\right ) 
        = \ds(\varphi(a))$.
\end{lemma}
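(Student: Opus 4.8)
The plan is to prove the identity by computing both sides inside $\clopup(X)$. Recall that just before the statement the nucleus $\ds$ was computed as
\[
\ds(U) = \cl\bigcup\{V \in \clopup(X) \mid V \cap \min Y_d \subseteq U\}
\]
for $U \in \clopup(X)$, so it suffices to show that $\varphi(\bigwedge M_a)$ equals this right-hand side with $U = \varphi(a)$.

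First I would rewrite the left-hand side. Since $\varphi : L \to \clopup(X)$ is a frame isomorphism, hence an isomorphism of complete lattices, it preserves arbitrary meets, so $\varphi(\bigwedge M_a) = \bigwedge\{\varphi(m) \mid m \in \max L_d,\ a \leq m\}$, the meet now taken in $\clopup(X)$. By \cref{rem: d upsets}, $\varphi$ carries $\max L_d$ bijectively onto the set of maximal $d$-upsets of $X$, and by \cref{min Yd is max d-elements} (via the bijection of \cref{thm: minYd = maxLd}) these are exactly the clopen upsets $X \setminus \downset y$ with $y \in \min Y_d$. Since $\varphi(a)$ is an upset, $a \leq m$, i.e.\ $\varphi(a) \subseteq X \setminus \downset y$, holds precisely when $y \notin \varphi(a)$. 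Hence $\varphi(\bigwedge M_a) = \bigwedge\{X \setminus \downset y \mid y \in \min Y_d,\ y \notin \varphi(a)\}$, the meet taken in $\clopup(X)$.

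Next I would evaluate this meet. Writing $Z = \{y \in \min Y_d \mid y \notin \varphi(a)\}$, the meet in $\clopup(X)$ of a family of clopen upsets equals the join of all clopen upsets contained in every member, which by \cref{prelim fact-join and meet} (joins in $\clopup(X)$ are closures of unions) is $\cl\bigcup\{W \in \clopup(X) \mid W \subseteq X \setminus \downset y \text{ for all } y \in Z\}$. The elementary point is that for an upset $W$ and a point $y$ one has $W \subseteq X \setminus \downset y$ iff $y \notin W$: the forward direction is clear since $y \in \downset y$, and conversely any $z \in W \cap \downset y$ would force $y \in W$ because $W$ is an upset. Therefore $W$ lies below the whole family iff $W \cap Z = \varnothing$, equivalently iff $W \cap \min Y_d \subseteq \varphi(a)$. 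Substituting this description back in yields $\varphi(\bigwedge M_a) = \cl\bigcup\{W \in \clopup(X) \mid W \cap \min Y_d \subseteq \varphi(a)\} = \ds(\varphi(a))$.

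I do not expect a genuine obstacle here; the argument is a bookkeeping exercise combining the explicit formula for $\ds$, the identification of maximal $d$-upsets from \cref{min Yd is max d-elements}, and the formula for meets in $\clopup(X)$. The only point requiring a little care is the case where $\{m \in \max L_d \mid a \leq m\}$ is empty, in which case $Z = \varnothing$ and both sides collapse to the top element $X$; I would note this explicitly so that the reduction of the left-hand side is valid without exception.
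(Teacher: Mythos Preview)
Your proposal is correct and follows essentially the same route as the paper: both identify $\{\varphi(m)\mid m\in M_a\}$ with $\{X\setminus\downset y\mid y\in\min Y_d\setminus\varphi(a)\}$ via \cref{min Yd is max d-elements}, rewrite the meet as the join (closure of the union) of all clopen upsets below every member, and then reduce the containment condition to $W\cap\min Y_d\subseteq\varphi(a)$ using that $W$ is an upset. The only cosmetic difference is that the paper first computes $\bigcap\varphi[M_a]=X\setminus\downset(\min Y_d\setminus\varphi(a))$ and tests $V\subseteq\bigcap\varphi[M_a]$ in one shot, whereas you test $W\subseteq X\setminus\downset y$ for each $y$ separately; your explicit remark on the empty case is a small bonus.
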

\begin{proof}
    Observe that
    \begin{align*}
        \varphi\left(\bigwedge M_a\right) 
        &= \varphi\left (\bigvee \{b \in L \mid b \leq m \mbox{ for all } m \in M_a\}\right)\\
        &= \cl \bigcup \left\{ V \in \clopup(X) \mid V \subseteq \bigcap \varphi[M_a]\right\}.
    \end{align*}
    Recall that $m \in \max L_d$ iff $\varphi(m)$ is a maximal $d$-upset. Thus, using \cref{min Yd is max d-elements}, we obtain that $m \in M_a$ iff $\varphi(m) = X \setminus \downset y$ for some $y \in \min Y_d \setminus \varphi(a)$. Therefore,
    \[
        \bigcap \varphi[M_a] = \bigcap \{X \setminus \downset y \mid y \in \min Y_d \setminus \varphi(a)\} = X \setminus \downset (\min Y_d \setminus \varphi(a)).
    \]
    Consequently, since $V$ is an upset,
    \begin{align*}
        V \subseteq \bigcap \varphi[M_a]
        &\iff V \subseteq X \setminus \downset (\min Y_d \setminus \varphi(a))\\
        &\iff V \cap \downset(\min Y_d \setminus \varphi(a)) = \varnothing\\
        &\iff V \cap (\min Y_d \setminus \varphi(a)) = \varnothing\\
        &\iff V \cap \min Y_d \subseteq \varphi(a),
    \end{align*}
        and the result follows from the above description of $\rho$.
\end{proof}

\begin{remark}
    By the previous lemma, the nucleus $\rho$ can be defined on an arbitrary arithmetic frame $L$ by \[\ds(a) = \bigwedge \{m \in \max L_d \mid a\leq m\}\] for each $a \in L$.
\end{remark}

We now describe the nuclear subset of $X$ corresponding to the nucleus $\ds$. For this we use the following: 

\begin{theorem} \label{thm: nuclear subset of ds}
    Let $L$ be an arithmetic $L$-space. Then $N_{\ds} = \cl \min Y_d$.
\end{theorem}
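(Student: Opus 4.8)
The plan is to show directly that the nuclear subset $N_\ds$ coincides with $\cl \min Y_d$ by first identifying $N_\ds$ as a closed subset of $X$ and then pinning down which points it contains. Recall that $\ds = h_* \circ h$ with $h(U) = U \cap \min Y_d$, and that $N_\ds = \{x \in X \mid \ds^{-1}[x] = x\}$ (under the identification of $N(L)$ with $N(\clopup X)$, this is $\{x : x \in \ds(U) \Rightarrow x \in U \text{ for all } U \in \clopup X\}$). So the first step is to spell out, using the explicit formula $\ds(U) = \cl\bigcup\{V \in \clopup(X) \mid V \cap \min Y_d \subseteq U\}$, exactly when $x \in N_\ds$: namely $x \in N_\ds$ iff for every $U \in \clopup(X)$, $x \in \ds(U)$ implies $x \in U$.

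Next I would establish the inclusion $\cl \min Y_d \subseteq N_\ds$. By \cref{join nuclear} (or \cref{subsets are nuclear}), $\cl Z$ is a nuclear subset for any $Z \subseteq Y$, so $\cl \min Y_d \in N(X)$; since $N_\ds$ is also a nuclear (hence closed) subset, it suffices to check $\min Y_d \subseteq N_\ds$, because $N_\ds$ is closed. So let $y \in \min Y_d$ and suppose $y \in \ds(U)$ for some $U \in \clopup(X)$. Since $y \in Y \subseteq Y_d$ is a localic point and $\ds(U)$ is the closure of the open upset $W = \bigcup\{V \in \clopup(X) \mid V \cap \min Y_d \subseteq U\}$, \cref{y and closures} gives $y \in W$, so $y \in V$ for some clopen upset $V$ with $V \cap \min Y_d \subseteq U$; as $y \in V \cap \min Y_d$, we get $y \in U$. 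Hence $y \in N_\ds$.

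For the reverse inclusion $N_\ds \subseteq \cl \min Y_d$, I would argue contrapositively: if $x \notin \cl \min Y_d$, then by Priestley separation and compactness there is a clopen $C$ with $x \in C$ and $C \cap \min Y_d = \varnothing$; replacing $C$ by $\downset C$ (clopen by \cref{prelim fact-clopen}) and taking $U = X \setminus \downset C$, we get a clopen upset $U$ with $x \notin U$ and $\min Y_d \subseteq U$. Then $X \cap \min Y_d = \min Y_d \subseteq U$, so $X$ itself is one of the $V$'s in the union defining $\ds(U)$; thus $\ds(U) = X \ni x$ while $x \notin U$, showing $x \notin N_\ds$. Combining the two inclusions yields $N_\ds = \cl \min Y_d$.

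The main obstacle I anticipate is the bookkeeping in the reverse inclusion: one must be careful that separating $x$ from the (not necessarily closed) set $\min Y_d$ actually produces a \emph{clopen upset} $U$ avoiding $x$ but containing all of $\min Y_d$, which is where \cref{prelim fact-clopen} ($\downset C$ clopen when $C$ is clopen) is essential; without downward-saturating, $U$ need not be an upset. A cleaner alternative, if the downset trick feels delicate, is to observe that $N_\ds$ is closed and $\min Y_d \subseteq N_\ds$ gives $\cl \min Y_d \subseteq N_\ds$, and then to use the correspondence between sublocales and nuclear subsets together with \cref{rho on L} and \cref{thm: minYd = maxLd}: the sublocale $S_\ds$ is (isomorphic to) $\Omega(\min Y_d)$, whose points are exactly $\min Y_d$, and by \cref{Y_j = Y cap N_j} the localic part of $N_\ds$ is $N_\ds \cap Y$; since an SL-space is the closure of its localic part, one concludes $N_\ds = \cl(N_\ds \cap Y) = \cl \min Y_d$ once one checks $N_\ds \cap Y = \min Y_d$, which follows from the first two steps.
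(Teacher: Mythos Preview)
Your forward inclusion $\cl\min Y_d \subseteq N_\ds$ is fine and essentially matches the paper's $(\supseteq)$ direction. The reverse inclusion, however, has a genuine gap. From $x\notin\cl\min Y_d$ you produce a clopen $C$ with $x\in C$ and $C\cap\min Y_d=\varnothing$, then replace $C$ by $\downset C$. But $\downset C\cap\min Y_d$ need not be empty: points of $\min Y_d$ may well lie \emph{below} elements of $C$. Concretely, in \cref{main example} take any $x\in X_\omega^*$. Then $x\notin\cl\min Y_d=Y_\omega$, and any clopen upset $U$ containing $\min Y_d=Y_\omega\setminus\{\omega\}$ must contain every $\upset y_i$, hence $\omega$, hence $\upset\omega\supseteq X_\omega^*\ni x$. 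So there is \emph{no} clopen upset $U$ with $\min Y_d\subseteq U$ and $x\notin U$; your strategy of forcing $\ds(U)=X$ via $V=X$ cannot succeed. What the paper does instead is work with an arbitrary $U$: if $\upset x\cap\cl\min Y_d\subseteq U$, compactness yields a clopen upset $V\ni x$ with $V\cap\cl\min Y_d\subseteq U$, so $V\cap\min Y_d\subseteq U$ and $V$ itself witnesses $x\in\ds(U)$.

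Your proposed alternative also fails: you claim $N_\ds\cap Y=\min Y_d$, but this is false in general. The localic part $Y_\ds=N_\ds\cap Y$ is the space of points of $\Omega(\min Y_d)$, i.e.\ the soberification of $\min Y_d$, which can be strictly larger (again \cref{main example}: there $\min Y_d$ carries the cofinite topology, so its soberification has an extra generic point, realized by $\omega\in N_\ds\cap Y$). The equality $N_\ds=\cl(N_\ds\cap Y)$ you invoke would require knowing $N_\ds$ is an SL-space, which you have not established either.
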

\begin{proof}
    By \cref{subsets are nuclear}, $\cl \min Y_d$ is a nuclear subset of $X$. Let $j \in N(L)$ be the nucleus associated with $\cl \min Y_d$ (see \cref{NL = NX remark}). It suffices to show that $\varphi(j(a)) = \ds(\varphi(a))$ for all $a \in L$. 

    ($\subseteq$) Let $x \in \varphi(j(a))$. Then $\upset x \cap \cl \min Y_d \subseteq \varphi(a)$ by \cref{Nj-2}. 
    Since $\upset x$ is a closed upset, it is an intersection of clopen upsets (see \cref{prelim fact-intersections}). 
    Therefore, by compactness, 
    there is $V \in \clopup(X)$ such that $x \in V$ and $V \cap \cl \min Y_d \subseteq \varphi(a)$.  
        Thus, $V \cap \min Y_d \subseteq \varphi(a)$, and so $x \in \rho(\varphi(a))$.

    ($\supseteq$)  By \cref{y and closures}, \begin{align*}
    \rho(\varphi(a)) \cap \min Y_d 
    &= \cl\bigcup \{V \in \clopup(X) \mid V \cap \min Y_d \subseteq \varphi(a)\} \cap \min Y_d \\
    &= \bigcup \{V \in \clopup(X) \mid V \cap \min Y_d \subseteq \varphi(a)\} \cap \min Y_d \subseteq \varphi(a).
    \end{align*}
    Therefore, since $\varphi(a)$ is closed, \[
        \rho(\varphi(a)) \cap \cl \min Y_d \subseteq \cl(\rho(\varphi(a)) \cap \min Y_d) \subseteq \varphi(a).
    \]
    Thus, for each $x \in \rho(\varphi(a))$, \[\upset x \cap \cl \min Y_d \subseteq \rho(\varphi(a)) \cap \cl \min Y_d \subseteq \varphi(a).\] Consequently, $x \in \varphi(ja)$ by \cref{Nj-2}.
\end{proof}

We conclude this section by describing the link between 
$\min Y_d$ and the localic part of~$N_{\ds}$.
\begin{proposition}
    The localic part $Y_{\ds}$ of $N_{\ds}$ is the soberification of $\min Y_d$.
\end{proposition}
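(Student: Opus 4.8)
The plan is to identify $Y_{\ds}$ with a concrete subspace of $X$ and then match it against the known universal property of the soberification. First I would use \cref{Y_j = Y cap N_j} (applied to the nucleus $\ds$) to see that $Y_{\ds} = N_{\ds} \cap Y$, and then invoke \cref{thm: nuclear subset of ds} to rewrite this as $Y_{\ds} = \cl(\min Y_d) \cap Y$. So the first step is purely a matter of assembling results already proved: $Y_{\ds}$ is the set of localic points of $X$ lying in the closure of $\min Y_d$, topologized (via \cref{topology Y}) by the traces of clopen upsets of $X$.

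Next I would recall that $N_{\ds}$, as a subspace of $X$, is order-homeomorphic to the Priestley space of the sublocale $L_{\ds}$ (\cref{remark: Priestley space of sublocale}), and that $L_{\ds} \cong \Omega(\min Y_d)$ by \cref{onto frame hom} and the definition of $\ds$ as $h_* \circ h$. Hence $Y_{\ds}$, being the localic part of the Priestley space of the spatial frame $\Omega(\min Y_d)$, is exactly $pt(\Omega(\min Y_d))$ — the space of points of the open-set frame of $\min Y_d$ — with its usual topology (\cref{topology Y} again, or \cref{rem nuclear}). But for any topological space $Z$, the space $pt(\Omega(Z))$ equipped with the hull-kernel topology is precisely the soberification of $Z$: this is the standard construction of the soberification as the spectrum of the frame of opens (see, e.g., \cite[Sec.~II.1]{PicadoPultr2012}). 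Applying this with $Z = \min Y_d$ gives the claim.

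The one genuine thing to check, and the step I expect to be the main obstacle, is that the \emph{topology} on $Y_{\ds}$ inherited from $X$ (i.e. $\{U \cap Y_{\ds} \mid U \in \clopup(X)\}$) coincides with the hull-kernel topology on $pt(\Omega(\min Y_d))$ under the bijection. By \cref{topology Y} the topology on $Y_{\ds}$ is the one making it the space of points of $L_{\ds}$, and $L_{\ds} \cong \Omega(\min Y_d)$ via $h$; so I would verify that this isomorphism intertwines the two descriptions — concretely, that $U \cap Y_{\ds}$ corresponds under the bijection to $\{p \in pt(\Omega(\min Y_d)) \mid p(U \cap \min Y_d) = 1\}$, i.e. to the basic open of the spectrum determined by the open set $U \cap \min Y_d$ of $\min Y_d$. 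Since $h$ is onto, every open of $\min Y_d$ arises this way, so this is a routine unwinding of the definitions once the frame isomorphism is in hand.

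Finally I would record the map $\min Y_d \to Y_{\ds}$ realizing the soberification: it sends $y \in \min Y_d$ to the completely prime filter $\{U \cap \min Y_d \mid U \in \clopup(X),\ y \in U\}$ of $\Omega(\min Y_d)$, equivalently to the point of $Y_{\ds}$ given by $y$ itself viewed in $\cl(\min Y_d) \cap Y$ (note $\min Y_d \subseteq Y$, so $\min Y_d \subseteq Y_{\ds}$). Density of $\min Y_d$ in $Y_{\ds}$ — which is part of the assertion that $Y_{\ds}$ is the soberification — follows because $N_{\ds} = \cl \min Y_d$ forces $Y_{\ds} = N_{\ds} \cap Y = \cl(\min Y_d) \cap Y \subseteq \cl_{Y_{\ds}}(\min Y_d)$, using that $\min Y_d \subseteq Y$. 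Together with $T_0$-ness of $Y_{\ds}$ (automatic, since points of a frame separate) and sobriety of $pt$ of any frame, this pins down $Y_{\ds}$ as the soberification of $\min Y_d$, completing the proof.
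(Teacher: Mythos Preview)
Your proposal is correct and follows essentially the same approach as the paper: identify $N_{\ds}$ as the Priestley space of $L_{\ds} \cong \Omega(\min Y_d)$, observe that its localic part $Y_{\ds}$ is then the space of points of $\Omega(\min Y_d)$, and invoke the standard fact that $pt(\Omega(Z))$ is the soberification of $Z$. The paper's proof compresses all of this into two sentences, whereas you additionally unpack the concrete description $Y_{\ds} = \cl(\min Y_d) \cap Y$, verify the topologies match, and record the embedding and density explicitly---these extra checks are correct but not needed for the argument.
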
        

\begin{proof}
The localic part of any L-space is the space of points of the associated frame (see \cref{topology Y}). Thus, $Y_{\ds}$ is the space of points of $\Omega(\min Y_d)$, which is the soberification of $\min Y_d$ (see, e.g., \cite[p.~44]{Johnstone1982}).     
\end{proof}

\section{\texorpdfstring
{Compactness of the maximal $d$-spectrum}
{Compactness of the maximal d-spectrum}} \label{sec: compactness}

We now turn our attention to studying topological properties of $\min Y_d$. As we mentioned in the introduction, in \cite{Bhattacharjee2018} the second author only considered arithmetic frames with a \emph{unit}; that is, a compact dense element, where we recall (see the paragraph before \cref{Fnegneg = max X}) that an element $a \in L$ is \emph{dense} if $a^{**} = 1$. In this section, we characterize units in the language of Priestley spaces and compare the existence of a unit to compactness of $\min Y_d$. 

We start by characterizing compact subsets of $\min Y_d$ in terms of special Scott upsets of~$X$.

\begin{definition} \label{def: dinitial}
	Let $X$ be an arithmetic L-space. A subset $Z \subseteq X$ is called \emph{$d$-initial} if $Z \cap Y \subseteq \upset (Z \cap \min Y_d)$.
\end{definition}

\begin{lemma} \plabel{dHM lemma}
	Let $X$ be an arithmetic L-space and $Y$ its localic part.
	\begin{enumerate}
		        \item A Scott upset $F \subseteq X$ is $d$-initial iff $F = \upset(F \cap \min Y_d)$. \label[dHM lemma]{dhm0}
		\item If $F \subseteq X$ is a $d$-initial Scott upset, then $F \cap \min Y_d$ is compact. \label[dHM lemma]{dhm1}
		\item If $K \subseteq \min Y_d$ is compact then $\upset K$ is a $d$-initial Scott upset. \label[dHM lemma]{dhm2}
	\end{enumerate}
	\end{lemma}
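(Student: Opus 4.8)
The plan is to prove \ref{dhm0} first, derive \ref{dhm1} from it together with compactness of $F$ inside $X$, and then prove \ref{dhm2} by a direct argument, the one delicate point being that $\upset K$ is closed. For \ref{dhm0}, note that $\upset(F\cap\min Y_d)\subseteq F$ holds automatically since $F$ is an upset, so if $F=\upset(F\cap\min Y_d)$ then $F\cap Y\subseteq F=\upset(F\cap\min Y_d)$ and $F$ is $d$-initial. Conversely, assume $F$ is a $d$-initial Scott upset and take $x\in F$. By \cref{prelim fact-min max exist} there is $m\in\min F$ with $m\le x$, and $m\in Y$ because $F$ is a Scott upset; hence $m\in F\cap Y\subseteq\upset(F\cap\min Y_d)$, so some $y\in F\cap\min Y_d$ satisfies $y\le m\le x$, whence $x\in\upset(F\cap\min Y_d)$.

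For \ref{dhm1}, recall that $\min Y_d$ carries the subspace topology from $Y$, so by \cref{topology Y} its opens are precisely the sets $U\cap\min Y_d$ with $U\in\clopup(X)$. Let $\{U_i\cap\min Y_d\}_i$ be an open cover of $F\cap\min Y_d$, so $F\cap\min Y_d\subseteq\bigcup_i U_i$. Since each $U_i$ is an upset and $F=\upset(F\cap\min Y_d)$ by \ref{dhm0}, this forces $F\subseteq\bigcup_i U_i$. But $F$ is a closed subset of the compact space $X$, hence compact, so $F\subseteq U_{i_1}\cup\dots\cup U_{i_n}$ for some finite subfamily, and therefore $F\cap\min Y_d\subseteq\bigcup_{j=1}^n(U_{i_j}\cap\min Y_d)$.

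For \ref{dhm2}, I would first establish the identity $\upset K=\bigcap\{U\in\clopup(X)\mid K\subseteq U\}$. The inclusion $\subseteq$ is clear: if $x\ge k\in K$ and $K\subseteq U\in\clopup(X)$, then $k\in U$ and $U$ is an upset, so $x\in U$. For $\supseteq$, suppose $x\notin\upset K$, i.e.\ $k\nleq x$ for all $k\in K$; by Priestley separation choose, for each $k\in K$, a set $U_k\in\clopup(X)$ with $k\in U_k$ and $x\notin U_k$. The sets $U_k\cap\min Y_d$ form an open cover of the compact space $K$, so $K\subseteq U:=U_{k_1}\cup\dots\cup U_{k_n}\in\clopup(X)$ while $x\notin U$; thus $x$ lies outside a member of $\{U\in\clopup(X)\mid K\subseteq U\}$. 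Hence $\upset K$ is an intersection of clopen upsets, so a closed upset. Next, any $m\in\min(\upset K)$ lies above some $k\in K\subseteq\upset K$, so minimality gives $m=k$; thus $\min(\upset K)=K\subseteq\min Y_d\subseteq Y$, and $\upset K$ is a Scott upset. Finally $\upset K\cap\min Y_d=K$ because $\min Y_d$ is an antichain, so $\upset(\upset K\cap\min Y_d)=\upset K$, and \ref{dhm0} shows $\upset K$ is $d$-initial. The main obstacle is exactly this closedness of $\upset K$: it is where compactness of $K$ (in the $\min Y_d$ topology) and Priestley separation are genuinely used, and isolating the displayed identity is the cleanest route; the remaining steps are routine bookkeeping with minimal points of closed sets and the antichain property of $\min Y_d$.
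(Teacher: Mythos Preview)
Your proof is correct and follows essentially the same approach as the paper: the arguments for \ref{dhm0} and \ref{dhm1} match the paper's almost verbatim, and for \ref{dhm2} you package the paper's closedness argument as the identity $\upset K=\bigcap\{U\in\clopup(X)\mid K\subseteq U\}$ and derive $d$-initiality from \ref{dhm0} rather than directly from the definition, but these are cosmetic rephrasings of the same reasoning.
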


\begin{proof}
    \ref{dhm0} The right-to-left implication is immediate. For the left-to-right implication, let $F$ be a $d$-initial Scott upset. Then $\min F \subseteq F \cap Y \subseteq \upset (F \cap \min Y_d)$, and hence $F = \upset \min F = \upset (F \cap \min Y_d)$. 
    
	\ref{dhm1} Suppose that $F \cap \min Y_d \subseteq \bigcup (U_i \cap \min Y_d)$ for a family $\{U_i\} \subseteq \clopup(X)$. Then $F \cap \min Y_d \subseteq \bigcup U_i$, and so $F \subseteq \bigcup U_i$ by \ref{dhm0}. Since $F$ is closed, it is compact, and hence $F \subseteq U_{i_1} \cup \dots \cup U_{i_n}$ for some $i_1, \dots, i_n$. Therefore,
	\[
		F \cap \min Y_d \subseteq (U_{i_1} \cap \min Y_d) \cup \dots \cup (U_{i_n} \cap \min Y_d),
	\]
	and hence $F \cap \min Y_d$ is compact.

	\ref{dhm2} Clearly, $\upset K$
  is $d$-initial. Thus, it suffices to show that $\upset K$ is a Scott upset. Since $\min \upset K = K \subseteq \min Y_d$, it is enough to show that $\upset K$ is closed. Let $x \notin \upset K$. Then $y \not\leq x$ for all $y \in K$. By Priestley separation, there is $U_y \in \clopup(X)$ such that $y \in U_y$ and $x \not \in U_y$. Therefore, $K \subseteq \bigcup (U_y \cap \min Y_d)$, so by compactness of $K$ there is $U \in \clopup(X)$ such that $\upset K \subseteq U$ and $x \notin U$. Thus, $\upset K$ is closed.
\end{proof}
As an immediate consequence, we have:
\begin{proposition} \plabel{prop: compact}
    Let $X$ be an arithmetic $L$-space and $K\subseteq \min Y_d$. The following are equivalent:
    \begin{enumerate}
        \item $K$ is compact. \label[prop: compact]{pc1}
        \item $\upset K$ is a $d$-initial Scott upset. \label[prop: compact]{pc2}
        \item There is a $d$-initial Scott upset $F\subseteq X$ such that $K=F\cap \min Y_d$. \label[prop: compact]{pc3}
     \end{enumerate}
\end{proposition}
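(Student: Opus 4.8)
The plan is to prove \cref{prop: compact} by assembling the three implications directly from \cref{dHM lemma}, since each is essentially stated there. The only genuine content is organizing the cycle of implications and handling the one spot where a small extra argument is needed.

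First I would prove \ref{pc1}$\Rightarrow$\ref{pc2}: this is exactly \ref{dhm2}, so the implication requires no further work beyond citing it. Next, \ref{pc2}$\Rightarrow$\ref{pc3} is immediate by taking $F = \upset K$; I just need to observe that $K = \upset K \cap \min Y_d$, which holds because $K \subseteq \min Y_d$ is an antichain (by \cref{fact: t1}, $\min Y_d$ is an antichain — more precisely, $Y_d$ is, so no element of $\upset K$ outside $K$ can lie in $\min Y_d$, as such an element would lie strictly above a point of $K \subseteq \min Y_d$, contradicting minimality). Finally, \ref{pc3}$\Rightarrow$\ref{pc1}: given a $d$-initial Scott upset $F$ with $K = F \cap \min Y_d$, apply \ref{dhm1} directly to conclude $K = F \cap \min Y_d$ is compact.

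The step I expect to need the most care is the verification that $\upset K \cap \min Y_d = K$ in \ref{pc2}$\Rightarrow$\ref{pc3}, since it is the only place where something is not literally a restatement of \cref{dHM lemma}; but this is short and follows from $\min Y_d$ being an antichain (a point in $\upset K \setminus K$ lies strictly above some element of $K$, hence strictly above an element of $\min Y_d$, hence cannot itself be minimal in $Y_d$). Everything else is a direct appeal to the lemma.

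\begin{proof}
    \ref{pc1}$\Rightarrow$\ref{pc2} is \ref{dhm2}.

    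\ref{pc2}$\Rightarrow$\ref{pc3} Take $F = \upset K$. It remains to check that $K = \upset K \cap \min Y_d$. The inclusion $K \subseteq \upset K \cap \min Y_d$ is clear. Conversely, if $z \in \upset K \cap \min Y_d$, then $y \leq z$ for some $y \in K \subseteq \min Y_d$; since $\min Y_d$ is an antichain, $y = z$, so $z \in K$.

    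\ref{pc3}$\Rightarrow$\ref{pc1} If $F \subseteq X$ is a $d$-initial Scott upset with $K = F \cap \min Y_d$, then $K$ is compact by \ref{dhm1}.
\end{proof}
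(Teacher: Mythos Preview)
Your proof is correct and matches the paper's approach: the paper states this proposition ``as an immediate consequence'' of \cref{dHM lemma} without a formal proof, and you have simply spelled out the obvious cycle of implications together with the verification that $K = \upset K \cap \min Y_d$ (which the paper, in the proof of the subsequent theorem, also calls ``easy to see'').

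One small slip in your informal plan: you write that ``$Y_d$ is'' an antichain, but that is false in general (see, e.g., \cref{ex: 4.11}, where $y < x_{0,0}$ with both in $Y_d$). Fortunately your formal proof only uses that $\min Y_d$ is an antichain, which is true by definition of $\min$, so the argument stands. The citation of \cref{fact: t1} is unnecessary for this.
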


\begin{theorem} \label{thm: compacts of min Yd}
	Let $X$ be an arithmetic L-space. There is a poset isomorphism between compact subsets of $\min Y_d$ and $d$-initial Scott upsets of $X$ \textup{(}both ordered by inclusion\textup{)}.
\end{theorem}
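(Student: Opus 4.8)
The plan is to assemble the desired poset isomorphism directly from the equivalences recorded in \cref{prop: compact}, so that almost all the work has already been done; what remains is to package the two assignments into mutually inverse, order-preserving maps.

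First I would define the two maps. Sending a compact $K \subseteq \min Y_d$ to $\upset K$ gives a $d$-initial Scott upset of $X$ by \cref{pc2}; call this map $\Phi$. In the other direction, sending a $d$-initial Scott upset $F$ to $F \cap \min Y_d$ gives a compact subset of $\min Y_d$ by \cref{dhm1}; call this map $\Psi$. Both maps are visibly order-preserving, since $K \subseteq K'$ implies $\upset K \subseteq \upset K'$, and $F \subseteq F'$ implies $F \cap \min Y_d \subseteq F' \cap \min Y_d$.

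Next I would check that $\Phi$ and $\Psi$ are mutually inverse. For $\Psi \circ \Phi = \mathrm{id}$: given compact $K \subseteq \min Y_d$, we must show $\upset K \cap \min Y_d = K$. The inclusion $K \subseteq \upset K \cap \min Y_d$ is clear; conversely, since $\min Y_d$ is an antichain (it is the set of minimal points of $Y_d$), any point of $\min Y_d$ lying above some point of $K$ must actually lie in $K$. For $\Phi \circ \Psi = \mathrm{id}$: given a $d$-initial Scott upset $F$, we must show $\upset(F \cap \min Y_d) = F$, which is exactly the content of \cref{dhm0}. This establishes that $\Phi$ and $\Psi$ are inverse bijections, and since both preserve inclusion in both directions, they constitute a poset isomorphism.

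There is essentially no obstacle here: the theorem is a formal corollary of \cref{dhm0,dhm1,dhm2}, and the only mild point to be careful about is the use of the antichain property of $\min Y_d$ in verifying $\upset K \cap \min Y_d = K$, together with the observation that an order isomorphism of posets only requires a bijection that is order-preserving in both directions, which is immediate once $\Phi$ and $\Psi$ are seen to be mutually inverse monotone maps.

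\begin{proof}
	Define $\Phi$ from the poset of compact subsets of $\min Y_d$ to the poset of $d$-initial Scott upsets of $X$ by $\Phi(K) = \upset K$; this is well defined by \cref{pc2}. Define $\Psi$ in the other direction by $\Psi(F) = F \cap \min Y_d$; this is well defined by \cref{dhm1}. Both maps are order-preserving: if $K \subseteq K'$ then $\upset K \subseteq \upset K'$, and if $F \subseteq F'$ then $F \cap \min Y_d \subseteq F' \cap \min Y_d$.

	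We show $\Phi$ and $\Psi$ are mutually inverse. Given a $d$-initial Scott upset $F$, \cref{dhm0} yields $\Phi(\Psi(F)) = \upset(F \cap \min Y_d) = F$. Conversely, let $K \subseteq \min Y_d$ be compact. Clearly $K \subseteq \upset K \cap \min Y_d$. For the reverse inclusion, suppose $y \in \upset K \cap \min Y_d$. Then $y' \leq y$ for some $y' \in K \subseteq \min Y_d$. Since $\min Y_d$ is an antichain, $y' = y$, so $y \in K$. Thus $\Psi(\Phi(K)) = \upset K \cap \min Y_d = K$.

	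Since $\Phi$ and $\Psi$ are mutually inverse order-preserving maps, they establish the desired poset isomorphism.
\end{proof}
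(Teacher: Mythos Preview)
Your proof is correct and follows essentially the same approach as the paper: define the two maps $K \mapsto \upset K$ and $F \mapsto F \cap \min Y_d$, invoke \cref{dHM lemma} for well-definedness, and verify they are mutually inverse using \cref{dhm0} and the antichain property of $\min Y_d$. The only difference is that you spell out the antichain argument for $\upset K \cap \min Y_d = K$, whereas the paper simply asserts it is easy to see.
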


\begin{proof}
	Consider the maps $F \mapsto F \cap \min Y_d$ and $K \mapsto \upset K$, where $F \subseteq X$ is a $d$-initial Scott upset and $K \subseteq \min Y_d$ is compact. These are well defined by \cref{dHM lemma}, and are clearly order preserving. It suffices to show that these maps are inverses of each other. But $F = \upset (F \cap \min Y_d)$ by \cref{dhm0}, and it is easy to see that $K = \upset K \cap \min Y_d$, completing the proof.
\end{proof}

We recall (see, e.g., \cite[p.~25]{PicadoPultr2012}) that a frame is \emph{max-bounded} if each proper element is below a maximal element. 

\begin{proposition}
	Let $L$ be an arithmetic frame and $X$ its Priestley space. Then $L_d$ is max-bounded iff $N_d$ is $d$-initial.
\end{proposition}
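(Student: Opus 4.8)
The plan is to transport the statement to the Priestley side via the poset isomorphism between $L_d$ and the $d$-upsets of $X$, and then reduce to an elementary observation about $Y_d$ and $\min Y_d$.

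First, I would use the fact that the isomorphism $\varphi \colon L \to \clopup(X)$ restricts to a poset isomorphism between $L_d$ and the poset of $d$-upsets of $X$ ordered by inclusion, carrying $1$ to $X$ and maximal $d$-elements to maximal $d$-upsets (see \cref{rem: d upsets}). Hence $L_d$ is max-bounded iff every proper $d$-upset of $X$ is contained in a maximal $d$-upset. By \cref{min Yd is max d-elements}, the maximal $d$-upsets are exactly the sets $X \setminus \downset y$ with $y \in \min Y_d$, and since a $d$-upset $U$ is an upset, $U \subseteq X \setminus \downset y$ iff $y \notin U$. Thus $L_d$ is max-bounded iff for every proper $d$-upset $U$ there is $y \in \min Y_d$ with $y \notin U$, i.e.\ $\min Y_d \not\subseteq U$. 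Finally, since $\cl \dcore U = U$ for a $d$-upset $U$, \cref{dU=X condition} shows that a $d$-upset $U$ is proper precisely when $Y_d \not\subseteq U$. Combining these, $L_d$ is max-bounded iff
\[
(\ast)\qquad \text{for every $d$-upset } U \subseteq X:\quad \min Y_d \subseteq U \implies Y_d \subseteq U.
\]

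It then remains to identify $(\ast)$ with $N_d$ being $d$-initial. Unwinding \cref{def: dinitial} and using $N_d \cap Y = Y_d$ (from \cref{Y_j = Y cap N_j}) together with $\min Y_d \subseteq Y_d \subseteq N_d$, the set $N_d$ is $d$-initial exactly when $Y_d \subseteq \upset \min Y_d$. If $Y_d \subseteq \upset \min Y_d$ and $U$ is any clopen upset with $\min Y_d \subseteq U$, then $\upset \min Y_d \subseteq U$, so $Y_d \subseteq U$; this gives $(\ast)$. Conversely, assume $(\ast)$ and suppose for contradiction that some $y_0 \in Y_d$ lies above no point of $\min Y_d$. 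Then $U \coloneqq X \setminus \downset y_0$ is a $d$-upset by \cref{Yd is d-element} (this is where $y_0 \in Y_d$ is used), and $\min Y_d \subseteq U$ since no $y \in \min Y_d$ satisfies $y \le y_0$; so $(\ast)$ gives $Y_d \subseteq U$, contradicting $y_0 \in Y_d \setminus U$. Hence $Y_d \subseteq \upset \min Y_d$, completing the equivalence.

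I do not expect a real obstacle. The two points that need care are the bookkeeping in the first step — that $\varphi$ genuinely restricts to a poset isomorphism $L_d \cong \{d\text{-upsets of }X\}$ under which properness and maximality are preserved — and, in the last step, the verification that $X \setminus \downset y_0$ really is a $d$-upset, which is exactly what \cref{Yd is d-element} supplies.
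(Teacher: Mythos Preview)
Your proposal is correct and follows essentially the same route as the paper: both translate max-boundedness of $L_d$ into the statement that every proper $d$-upset is contained in a maximal $d$-upset, invoke \cref{dU=X condition} to identify proper $d$-upsets with those not containing $Y_d$, and use \cref{Yd is d-element,min Yd is max d-elements} to pass between $Y_d$, $\min Y_d$, and $d$-upsets of the form $X \setminus \downset y$. The only cosmetic difference is that you isolate the intermediate condition $(\ast)$ and the reformulation $Y_d \subseteq \upset \min Y_d$ explicitly, whereas the paper argues each implication directly; the underlying steps and lemmas are identical.
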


\begin{proof}
	Since $L_d$ is max-bounded iff every proper $d$-upset is contained in a maximal $d$-upset (see \cref{rem: d upsets}), it suffices to show that the latter condition is equivalent to $N_d$ being $d$-initial.

	($\Rightarrow$) Let $y \in N_d \cap Y$. Then $y \in Y_d$ by \cref{Y_j = Y cap N_j}, so $X \setminus \downset y$ is a $d$-upset by \cref{Yd is d-element}. Hence, there exists a maximal $d$-upset $U$ such that $X \setminus \downset y \subseteq U$. By \cref{min Yd is max d-elements}, $U = X \setminus \downset y'$ for some $y' \in \min Y_d$. Thus, $X \setminus \downset y \subseteq X \setminus \downset y'$, which implies that $y' \in \downset y' \subseteq \downset y$. Therefore, $y' \leq y$, as required.

	($\Leftarrow$) Let $U$ be a proper $d$-upset. Then $U = \cl\dcore U \neq X$, so $Y_d \not \subseteq U$ by \cref{dU=X condition}. Hence, there is $y \in Y_d \setminus U \subseteq N_d$. Since $N_d$ is $d$-initial, there is $y' \in \min Y_d$ with $y' \leq y$. Therefore, $U \subseteq X \setminus \downset y'$, which is a maximal $d$-upset by \cref{min rmax gives maximal d-elements}.
\end{proof}

It is well known that if an arithmetic frame $L$ has a unit, then $L_d$ is max-bounded (see, e.g., \cite[before Prop.~3.3]{Bhattacharjee2018}). The following example shows that $L_d$ being max-bounded is a strictly weaker condition.

\begin{example} \label{example no unit}
	Let $L = \wp(\mathbb N)$. Then $L$ is an arithmetic frame, and $da = a$ for all $a \in L$ since $L$ is Boolean, so $L= L_d$. The maximal elements of $L_d$ are exactly the coatoms. Therefore, $L_d$ is max-bounded since it is atomic. However, $L_d$ does not contain a unit since the only dense element is $1$, which is not compact. 
\end{example}

We end this section by describing the Priestley analogue of having a unit and its relation to compactness of $\min Y_d$.

\begin{theorem} \plabel{min Yd compact}
	Let $L$ be an arithmetic frame and $X$ its L-space. The following are equivalent.
	\begin{enumerate}
		\item  There is a unit in $L$.
		\label[min Yd compact]{min Yd compact-1}
		\item There is a cofinal $U \in \clopsup(X)$. \label[min Yd compact]{min Yd compact-2}
        \item There is $U \in \clopsup(X)$ such that $Y_d \subseteq U$.
        \label[min Yd compact]{min Yd compact-2.5}
	\end{enumerate}
	The previous conditions imply the following equivalent conditions.
	\begin{enumerate}[resume]
		\item $\upset \min Y_d$ is a Scott upset. \label[min Yd compact]{min Yd compact-3}
		\item $\min Y_d$ is compact. \label[min Yd compact]{min Yd compact-4}
					\end{enumerate}
	If in addition $N_d$ is $d$-initial, then all five conditions are equivalent.
\end{theorem}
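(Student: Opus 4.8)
The theorem has three tiers: (1)$\Leftrightarrow$(2)$\Leftrightarrow$(3) at the top, (4)$\Leftrightarrow$(5) in the middle, the implication (top)$\Rightarrow$(middle), and the converse (middle)$\Rightarrow$(top) under the extra hypothesis that $N_d$ is $d$-initial. I would prove these in roughly that order.

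For (1)$\Leftrightarrow$(2): a unit is a compact dense element of $L$, i.e.\ an element $u\in K(L)$ with $u^{**}=1$. Under $\varphi:L\xrightarrow{\sim}\clopup(X)$, compact elements correspond to clopen Scott upsets (\cref{rem: 2.8}), and by \cref{Fnegneg = max X} the condition $u^{**}=1$ translates to $\max X\subseteq\varphi(u)$, i.e.\ $\varphi(u)$ is cofinal. So a unit corresponds exactly to a cofinal clopen Scott upset, giving (1)$\Leftrightarrow$(2) essentially by unwinding definitions. For (2)$\Leftrightarrow$(3): if $U\in\clopsup(X)$ is cofinal, then $\max X\subseteq U$, and since $N_d=\cl Y_d$ (by \cref{Xd = cl rmax Y}) with $Y_d\subseteq N_d\subseteq X$, cofinality plus $U$ being a closed upset forces $N_d\subseteq\upset U= U$, hence $Y_d\subseteq U$; conversely, if $Y_d\subseteq U$ for some $U\in\clopsup(X)$, then $\max Y\subseteq Y_d\subseteq U$ by \cref{Y subset Y_d}, and one uses \cref{lem: ext} (every point of $\max X$ lies above — or rather one extends via localic points below maximal points) together with $U$ being a Scott upset: given $x\in\max X$, pick $y\in\max(\downset x\cap Y)$; but more directly, since $Y_d=\cl^{-1}$-dense in $N_d$ and $N_d$ is cofinal (\cref{Nd cofinal inductive}), $\max X\subseteq N_d=\cl Y_d\subseteq\cl U=U$, so $U$ is cofinal.

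For the implication (1)--(3)$\Rightarrow$(4)--(5) and the equivalence (4)$\Leftrightarrow$(5): here I would invoke \cref{prop: compact} / \cref{thm: compacts of min Yd}, which say $\min Y_d$ is compact iff $\upset\min Y_d$ is a $d$-initial Scott upset. The $d$-initiality of $\upset\min Y_d$ is automatic, so (4)$\Leftrightarrow$(5) reduces to: $\min Y_d$ compact $\iff$ $\upset\min Y_d$ is a Scott upset, which is exactly \cref{dhm2} in one direction and \cref{dhm1}/\cref{dhm0} in the other (if $\upset\min Y_d$ is a Scott upset it is trivially $d$-initial, hence $\min Y_d=\upset\min Y_d\cap\min Y_d$ is compact by \cref{dhm1}). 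For (3)$\Rightarrow$(4): take $U\in\clopsup(X)$ with $Y_d\subseteq U$. Then $\min Y_d\subseteq Y_d\subseteq U$, and since $U$ is a \emph{clopen} Scott upset it is in particular compact and its minimal points lie in $Y$; I claim $\upset\min Y_d= U\cap\upset\min Y_d$ is closed: more simply, $\min Y_d\subseteq U$ with $U$ compact clopen gives that $\upset\min Y_d\cap U=\upset\min Y_d$ (as $\min Y_d$ is an antichain of $Y_d$) — actually the cleanest route is: $\min Y_d\subseteq U$ and $U\in\clopsup(X)$, so $U$ is a Scott upset containing $\min Y_d$; then $\upset\min Y_d\subseteq U$, and one shows $\upset\min Y_d$ is closed by a Priestley-separation/compactness argument using that $\min Y_d$ is relatively closed in the compact set $U\cap\cl\min Y_d = U\cap N_d$ (here $N_d=\cl Y_d$, and $Y_d\subseteq U$ clopen forces $N_d=\cl Y_d\subseteq U$, so $U\cap N_d=N_d$ is compact, and $\min Y_d=\min N_d$ is closed by \cref{prelim fact-max closed} applied to the reverse order — or directly since $N_d$ is an arithmetic L-space by \cref{Xd arithmetic}). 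This shows $\upset\min Y_d$ is a closed upset with minimal points in $Y_d\subseteq Y$, hence a Scott upset, giving (4).

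The last sentence — if additionally $N_d$ is $d$-initial then all five are equivalent — requires showing (4)$\Rightarrow$(3) (or (4)$\Rightarrow$(1)) under that hypothesis. If $N_d$ is $d$-initial and $\min Y_d$ is compact, then by \cref{prop: compact} $\upset\min Y_d$ is a $d$-initial Scott upset; but $d$-initiality of $N_d$ means $N_d\cap Y = Y_d\subseteq\upset(N_d\cap\min Y_d)=\upset\min Y_d$, so $\upset\min Y_d$ is a Scott upset \emph{containing $Y_d$}. Now apply \cref{scott intersection of clopen scott}: $\upset\min Y_d=\bigcap\{V\in\clopsup(X)\mid\upset\min Y_d\subseteq V\}$, and one needs a single such $V$ that is also cofinal, or at least contains $Y_d$; since $\upset\min Y_d\supseteq\max X$? — not obviously, so instead observe $Y_d\subseteq\upset\min Y_d\subseteq V$ for every $V$ in the family, and we need *one* such $V$ — take any $V\in\clopsup(X)$ with $\upset\min Y_d\subseteq V$ (exists since $\upset\min Y_d$ is a Scott upset and $X$ is algebraic, by \cref{scott intersection of clopen scott} the family is nonempty as $X\in\clopsup(X)$ works trivially, but we want proper control); then $Y_d\subseteq V$, giving (3). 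I expect the main obstacle to be this last step: pinning down that $d$-initiality of $N_d$ is exactly the hypothesis that upgrades the Scott upset $\upset\min Y_d$ to one containing all of $Y_d$, and then extracting a \emph{clopen} Scott upset sandwiched between them via compactness of $N_d=\cl Y_d$ and \cref{scott intersection of clopen scott}.
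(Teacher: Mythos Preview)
Two steps have genuine gaps. For (2)$\Rightarrow$(3) you claim ``cofinality plus $U$ being a closed upset forces $N_d\subseteq\upset U=U$'', but $\max X\subseteq U$ with $U$ an upset says nothing about points of $N_d$ lying strictly below $\max X$. The paper instead invokes the characterization \cref{eqv rmax 3} of $Y_d$: for $y\in Y_d$ and cofinal $U\in\clopsup(X)$ one has $\max\upset y\subseteq\max X\subseteq U$, and the defining property of $Y_d$ then forces $y\in U$.

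The more serious gap is (3)$\Rightarrow$(4). You try to show $\upset\min Y_d$ is closed by arguing $\min Y_d=\min N_d$ and then citing ``\cref{prelim fact-max closed} applied to the reverse order''. Neither step works: the identity $\min Y_d=\min N_d$ is unproved (it does hold under (3), but this itself requires the argument below), and more fatally, L-spaces are not symmetric under order reversal---closedness of $\max F$ uses the Esakia property, and $\min F$ of a closed set need not be closed. The paper's route uses the key ingredient you omit: $N_d$ is \emph{inductive} (\cref{Nd cofinal inductive}), so for the Scott upset $U$ the set $\upset(U\cap N_d)$ is automatically a Scott upset; one then verifies $\min\upset(U\cap N_d)=\min Y_d$ directly, whence $\upset\min Y_d=\upset(U\cap N_d)$. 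Your remaining steps---(1)$\Leftrightarrow$(2), (3)$\Rightarrow$(2), (4)$\Leftrightarrow$(5), and (5)$\Rightarrow$(3) under $d$-initiality (via \cref{scott intersection of clopen scott}, a minor variant of the paper's finite-subcover argument)---are correct.
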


\begin{proof}
	\ref{min Yd compact-1}$\Leftrightarrow$\ref{min Yd compact-2} By \cref{rem: 2.8}, $a\in K(L)$ iff $\varphi(a)$ is a Scott upset. Moreover,  $a$ is dense iff
		$\max X \subseteq \varphi(a)$ by \cref{j=1 iff contains N_j,Nnegneg=max X}.

    \ref{min Yd compact-2}$\Leftrightarrow$\ref{min Yd compact-2.5}
     Suppose $U \in \clopsup(X)$ is cofinal and $y \in Y_d$. Then $\max \upset y \subseteq \max X \subseteq U$, so $y \in U$ by \cref{eqv rmax 3}. Therefore, $Y_d \subseteq U$. Conversely, suppose $Y_d \subseteq U$. Then $N_d = \cl Y_d \subseteq U$. But $N_d$ is cofinal by \cref{lem: dense = cofinal}, so $\max X \subseteq U$.

	\ref{min Yd compact-2.5}$\Rightarrow$\ref{min Yd compact-3} Since $N_d$ is inductive, $\upset (U \cap N_d)$ is a Scott upset. Thus, $\min \upset (U \cap N_d) \subseteq Y$, so 
	\[
		\min \upset (U \cap N_d) \subseteq Y \cap N_d = Y_d.
	\]
	We show that $\min \upset(U\cap N_d) = \min Y_d$.
    If $y \in \min \upset(U \cap N_d)$ and $y' \in Y_d$ such that $y' \leq y$, then $y' \in U \cap N_d$ because $Y_d \subseteq U$, so $y = y'$ since 
    \[
    	y \in \min \upset (U \cap N_d)=\min (U \cap N_d).
    \]
    Hence, $\min \upset(U \cap N_d) \subseteq \min Y_d$. Conversely, if $y  \in \min Y_d$ and $x \in \upset(U \cap N_d)$ with $x \leq y$, then there is $y' \in \min \upset (U \cap N_d) \subseteq Y_d$ with $y' \leq x \leq y$, so $y = y'$ since $y \in \min Y_d$. Consequently, $\min Y_d = \min \upset(U\cap N_d)$. Thus, $\upset \min Y_d = \upset(U \cap N_d)$, and hence $\upset \min Y_d$ is a Scott upset.
 
	\ref{min Yd compact-3}$\Leftrightarrow$\ref{min Yd compact-4} Since $\upset \min Y_d$ is $d$-initial, this follows from \cref{prop: compact}.

	Finally, suppose that $N_d$ is $d$-initial.

	\ref{min Yd compact-4}$\Rightarrow$\ref{min Yd compact-2.5} Since $\min Y_d$ is compact, the open cover $\{V \cap \min Y_d \mid V \in \clopsup(X)\}$ of $\min Y_d$ has a finite subcover, and since finite unions of clopen Scott upsets are clopen Scott upsets, there is $V \in \clopsup(X)$ such that $\min Y_d \subseteq V$. 
    By \cref{Y_j = Y cap N_j}, $Y_d = N_d \cap Y$. Thus, since $N_d$ is $d$-initial, $Y_d \subseteq \upset \min Y_d \subseteq V$.
\end{proof}

\begin{remark} 
    In \cref{example: no unit and compact} we will show that the assumption in \cref{min Yd compact} that $N_d$ is $d$-initial is necessary. In fact, we will see that $\min Y_d$ may be compact Hausdorff without $L$ having a unit.
\end{remark}

\section{\texorpdfstring
{Hausdorffness of the maximal $d$-spectrum}
{Hausdorffness of the maximal d-spectrum}}\label{Section 7}

In this final section, we give an example of an arithmetic frame $L$ with a unit such that $\max L_d$ is not Hausdorff, thus answering the question of \cite{Bhattacharjee2018} in the negative. In addition, we characterize exactly when $\max L_d$ is Hausdorff. 
Our characterization doesn't require that $L$ has a unit, only that $\max L_d$ is locally compact. 
Under this assumption, we show that $\max L_d$ is Hausdorff iff it is stably locally compact, a condition that plays an important role in domain theory (see \cite[Sec.~VI.6]{Compendium2003}). 

\begin{example} \label{main example}
Consider the Stone-Čech compactification of the natural numbers $\beta \mathbb N$. Partition the natural numbers in countably many countable subsets $\mathbb N = X_0 \cup X_1 \cup  X_2 \cup \dots$, where $X_i = \{x_{i,0},x_{i,1}, x_{i,2}, \dots\}$. Then for each $X_i$, $\cl X_i$ is a clopen set of $\beta \mathbb N$ homeomorphic to $\beta \mathbb N$ (see, e.g., \cite[p.~174]{Engelking1989}). Let $X_i^* = \cl (X_i) \cap \mathbb N^*$ and let $Y_\omega = \{y_0, y_1, y_2, \dots\} \cup \{\omega\}$ be the one-point compactification of a copy of the natural numbers. Consider now the disjoint union $X = \beta \mathbb N \sqcup Y_\omega$ and the partial order in \cref{fig:non hausdorff}, where $X_\omega^* = \mathbb N^* \setminus \bigcup_{n\in\mathbb N} X_n^*$.

\begin{figure}[H]
\centering
\begin{tikzpicture}[
	bullet/.style={fill,circle,inner sep=2pt},scale=.75]
	\begin{scope}[nodes=bullet]
		\node[label=below:$y_0$] (y0) at (3,-3) {};
		\foreach \x in {0,1,2} {
			\node[label={[label distance=-2mm]above:$x_{0,\x}$}] (\x) at (\x,0) {};
			\draw (y0) to (\x);
			};
	\end{scope}
		\path (2) -- node[auto=false]{\ldots} (4,0);
		\draw[very thick, |-|] (4, 0) to node[label=above:$X^*_0$] {} (6,0);
		\draw (y0) to (4,0);
		\draw (y0) to (6,0);
	\begin{scope}[shift={(7.5,0)}]
		\begin{scope}[nodes=bullet]
			\node[label=below:$y_1$] (y1) at (3,-3) {};
		\foreach \x in {0,1,2} {
			\node[label={[label distance=-2mm]above:$x_{1,\x}$}] (\x) at (\x,0) {};
			\draw (y1) to (\x);
			};
		\end{scope}
		\path (2) -- node[auto=false]{\ldots} (4,0);
		\draw[very thick, |-|] (4, 0) to node[label=above:$X^*_1$] {} (6,0);
		\draw (y1) to (4,0);
		\draw (y1) to (6,0);
	\end{scope}
	\path (14,-1.5) -- node[auto=false]{\huge\ldots} (16,-3);
	\node[label=below:$\omega$,bullet] (omega) at (15.5,0) {};
	\draw (y0) -- ($(y0)!.45!(omega)$);
	\draw ($(y0)!.825!(omega)$) -- (omega);
	\draw (y1) -- (omega);
	\draw[very thick, |-|] (14, 3) to node[label=above:$X_\omega^*$] {} (17,3);
	\draw (omega) -- (17,3);
	\draw (omega) -- (14,3);
	\end{tikzpicture}
	\caption{The Priestley space of an arithmetic frame whose maximal $d$-spectrum is not Hausdorff.}\label{fig:non hausdorff}
	\end{figure}
\end{example}

Our goal is to show that $X$ is an arithmetic L-space such that $\min Y_d$ is not Hausdorff. We have several things to verify.

\begin{claim}
	$X$ is a Priestley space.
\end{claim}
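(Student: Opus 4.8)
The plan is to verify the three defining properties of a Priestley space for $X = \beta\mathbb{N} \sqcup Y_\omega$ with the order depicted in \cref{fig:non hausdorff}: that $X$ is a Stone space (compact, Hausdorff, zero-dimensional), that $\leq$ is a partial order, and that Priestley separation holds. The underlying space is easy: $\beta\mathbb{N}$ is a Stone space and $Y_\omega$, the one-point compactification of a discrete countable set, is also a Stone space (it is compact Hausdorff with a clopen basis given by finite subsets of $\{y_n\}$ together with cofinite sets containing $\omega$), so their disjoint union $X$ is again a Stone space. That $\leq$ is a partial order is immediate from the picture: the relation is the reflexive-transitive relation generated by putting each $y_n$ below the points of $\cl X_n$ (namely $\{x_{n,k}\} \cup X_n^*$) and below $\omega$, and putting $\omega$ below the points of $X_\omega^*$; since all ``upward'' edges go from the $Y_\omega$-part into $\beta\mathbb{N}$ and there are no edges the other way, no cycles are possible, so antisymmetry holds.

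The substantive part is Priestley separation: given $x \not\leq y$, produce a clopen upset $U$ with $x \in U$, $y \notin U$. Here I would first observe that the principal upsets are all clopen: $\upset y_n = \{y_n\} \cup \cl X_n \cup \{\omega\} \cup X_\omega^*$ is clopen (a union of clopen pieces), $\upset \omega = \{\omega\} \cup X_\omega^*$ is clopen, and $\upset x = \{x\}$ is clopen for every $x \in \beta\mathbb{N}$ (these are the maximal points, and singletons in $\beta\mathbb{N}$ are... not clopen — careful: points of $\mathbb{N}^*$ are not isolated). So I should be more careful: the minimal points are exactly $Y_\omega \setminus \{\omega\} = \{y_n : n\}$ together with... no, $\omega$ sits above the $y_n$, so the minimal points are precisely the $y_n$. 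For separation I split into cases according to where $x$ and $y$ live. The cleanest route is: to build clopen upsets it suffices to take, for any clopen set $C \subseteq X$, the set $\upset C$; one checks $\upset C$ is clopen because $\upset$ only adds the (clopen) principal upsets of the finitely-many or controlled minimal points below $C$ — more precisely, $\upset C = C \cup \bigcup\{\upset y_n : y_n \in \cl X_n \cap C \text{ via } X_n\} \cup (\text{possibly } \upset\omega)$, and one argues this is clopen using that $\{n : y_n \le \text{something in } C\}$-indexed unions of the uniformly-clopen sets $\upset y_n$ behave well, together with \cref{prelim fact-up down closed} for the closedness of $\downset$-type sets.

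Concretely, the key steps in order: (1) recall $\beta\mathbb{N}$ and $Y_\omega$ are Stone spaces, hence so is $X$; (2) read off from the figure that $\leq$ is a partial order, noting edges only point from $Y_\omega\setminus\{\omega\}$ and $\{\omega\}$ upward into $\beta\mathbb{N}$; (3) identify $\max X = \beta\mathbb{N}$ and $\min X = \{y_n : n \in \mathbb{N}\}$; (4) show every clopen subset $C$ of $X$ has clopen $\upset C$, the crux being to handle the $\omega$-point: $\omega \in \cl\{y_n : n\}$, so a clopen set containing infinitely many $y_n$ but not $\omega$ cannot exist, which forces the index set of ``relevant'' $y_n$'s to be either finite or cofinite-with-$\omega$, and in either case the union $\bigcup \upset y_n$ is clopen (finite union, or a cofinite union whose complement is a finite union of the clopen $\cl X_n$'s adjusted appropriately); (5) conclude separation: given $x \not\leq y$, take a clopen $C$ with $x \in C$, $y \notin C$, shrink $C$ so that additionally no minimal point below $y$ lies in $C$ (possible since $x \not\leq y$ means $x$ is not below... rather $x \notin \downset y$, and $\downset y$ is closed by \cref{prelim fact-up down closed}, so separate $x$ from the closed set $\downset y$), and then $U = \upset C$ works. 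The main obstacle I anticipate is precisely step (4)/(5): the point $\omega$ being in the closure of the minimal points $\{y_n\}$ means $\upset$ of an arbitrary clopen set need not be clopen a priori, so one must exploit the specific combinatorial structure (each $y_n$ attached to its own clopen copy $\cl X_n$ of $\beta\mathbb{N}$, and $\omega$ attached to $X_\omega^* = \mathbb{N}^* \setminus \bigcup_n X_n^*$) to show that the ``bad'' configurations do not arise, or equivalently to exhibit enough clopen upsets directly to separate every pair.
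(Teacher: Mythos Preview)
Your plan contains a genuine gap in step (4), and it is exactly the obstacle you flagged but did not resolve. The claim that $\upset C$ is clopen for every clopen $C$ is false in this space. Take $C=\{y_0\}$, which is clopen since $y_0$ is isolated in $Y_\omega$. Then
\[
\upset C \;=\; \upset y_0 \;=\; \{y_0,\omega\}\cup \cl X_0 \cup X_\omega^*,
\]
and its trace on $Y_\omega$ is $\{y_0,\omega\}$. In the one-point compactification $Y_\omega$, every open set containing $\omega$ is cofinite, so $\{y_0,\omega\}$ is not open; hence $\upset y_0$ is not open in $X$. Thus neither your finite-case nor your cofinite-case argument in step (4) goes through, and the separation scheme of step (5) (take clopen $C$ disjoint from $\downset y$, then use $\upset C$) does not produce a clopen upset. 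There is also a circularity: you invoke \cref{prelim fact-up down closed} to get $\downset y$ closed, but that lemma is stated for Priestley spaces, which is what you are proving.

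The paper's proof avoids $\upset$ entirely and works with \emph{downsets} instead, which in this example are much better behaved. It makes a two-case split on the location of the larger point. If $x\in\beta\mathbb N$ and $x\not\le x'$, then since $\beta\mathbb N$ is a clopen antichain sitting at the top of $X$, any clopen subset of $\beta\mathbb N$ is already a clopen upset of $X$, and $\beta\mathbb N$ being a Stone space supplies the needed separating clopen. If $y\in Y_\omega$ and $y\not\le x$, one checks that $x\notin\upset\omega$, so $x\in\downset\cl X_i$ for some $i$ with $y\ne y_i$; since $\downset\cl X_i=\cl X_i\cup\{y_i\}$ is a clopen downset, its complement is a clopen upset containing $y$ and missing $x$. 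The moral is that in this space it is $\downset$ of clopens (not $\upset$) that is clopen, and the separation should be run through complements of clopen downsets rather than through principal upsets.
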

\begin{proof}
$X$ is a Stone space since it is the disjoint union of two Stone spaces. It remains to be shown that $X$ satisfies the Priestley separation axiom. For $x \in \beta \mathbb N$ and $x' \in X$ with $x\not\leq x'$, finding a clopen upset containing $x$ and missing $x'$ is easy since $\beta \mathbb N$ is a clopen upset of $X$.

Let $y \in Y_\omega$ and $x \in X$ with $y \not \leq x$. Then $x \notin \upset \omega$, so $x \in \downset \cl X_i$ for some $i$ such that $y \neq y_i$. Consider $U = X \setminus \downset \cl X_i$. Since $\cl X_i$ is clopen, so is $\downset \cl X_i$. Thus, $U$ is a clopen upset separating $y$ from $x$.
\end{proof}
\begin{claim} \label{claim: Lspace}
	$X$ is an L-space.
\end{claim}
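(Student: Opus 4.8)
To verify that $X$ is an L-space, I need to check two things: that $X$ is an Esakia space (the closure of each upset is an upset) and that $X$ is extremally order-disconnected (the closure of each open upset is open), which by \cref{esakia} is equivalent to being an L-space. Actually, since the definition given in the paper is simply that the closure of each open upset is an open upset, I will verify exactly that statement directly.

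First I would decompose the structure. The space $X$ is the disjoint union $\beta\mathbb N \sqcup Y_\omega$ as a topological space, with the order linking the two pieces: each $y_i$ sits below the clopen set $\cl X_i \subseteq \beta\mathbb N$, and $\omega$ sits below $X_\omega^*$, while every $y_i \le \omega$. The key topological observation is that $\beta\mathbb N$ is extremally disconnected (it is the Stone space of a complete Boolean algebra $\wp(\mathbb N)$), so the closure of any open subset of $\beta\mathbb N$ is clopen in $\beta\mathbb N$ and hence clopen in $X$. The remaining points $Y_\omega$ are all isolated except for $\omega$, whose neighborhoods are cofinite subsets of $Y_\omega$.

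The main work is to take an arbitrary open upset $U$ of $X$ and show $\cl U$ is an open upset. I would split $U = (U \cap \beta\mathbb N) \sqcup (U \cap Y_\omega)$. Since $\beta\mathbb N$ is clopen and extremally disconnected, $W := \cl(U \cap \beta\mathbb N)$ is clopen in $X$ and contained in $\beta\mathbb N$; I must check $W$ is an upset, which follows because the only points below points of $\beta\mathbb N$ are in $Y_\omega$ (so there is nothing in $\beta\mathbb N$ strictly above a point of $W$ that could fail to be in $W$) — more precisely, within $\beta\mathbb N$ the order is trivial (an antichain), so any clopen subset of $\beta\mathbb N$ is automatically an upset relative to the ambient order once we note nothing in $\beta\mathbb N$ lies above anything. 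For the $Y_\omega$ part: if $\omega \in U$ then $U \supseteq$ some cofinite subset of $Y_\omega$ together with (by the upset condition) $X_\omega^*$ and all but finitely many $\cl X_i$; one then checks $\cl U = W \cup Y_\omega$ is clopen. If $\omega \notin U$, then $U \cap Y_\omega$ is a subset of the isolated points $\{y_i\}$, and the upset condition forces $\cl X_i \subseteq U$ for each such $y_i$; the closure adds $\omega$ only if infinitely many $y_i$ lie in $U$, and in that case $X_\omega^* \subseteq \cl(\bigcup X_i^*) \subseteq \cl U$ by the extremal disconnectedness of $\beta\mathbb N$ applied to $\bigcup_{y_i \in U} X_i$, so again $\cl U$ is open. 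In every case $\cl U$ is seen to be clopen, and checking it is an upset is routine from the description of the order.

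\textbf{The main obstacle.} The delicate case is when $U$ contains infinitely many of the isolated points $y_i$ but not $\omega$: then $\omega \in \cl(U \cap Y_\omega)$, and one must confirm that $\cl U$ still satisfies the upset condition at $\omega$, i.e.\ that $X_\omega^* \subseteq \cl U$. This is where extremal disconnectedness of $\beta\mathbb N$ is essential: $\cl\big(\bigcup_{y_i \in U} X_i\big)$ is clopen in $\beta\mathbb N$, it contains each $X_i^*$ for $y_i \in U$, and since there are infinitely many such $i$ one argues (using that $X_\omega^* = \mathbb N^* \setminus \bigcup_n X_n^*$ meets the closure of any infinite union of the $X_i$'s, or more carefully that the clopen set $\cl\bigcup_{y_i\in U} X_i$ must contain $X_\omega^*$-points as limit points) that $X_\omega^* \subseteq \cl U$. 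I would make this precise by noting that a clopen subset of $\beta\mathbb N$ containing infinitely many of the pairwise-disjoint clopen pieces $\cl X_i$ corresponds to a subset $A \subseteq \mathbb N$ with $A \supseteq \bigcup_{i\in I} X_i$ for an infinite $I$; then $\cl X_\omega^* \cap \cl A \ne \varnothing$ forces, after intersecting with the appropriate clopen, that all of $X_\omega^*$ is captured — this uses that each $X_i$ is infinite and the combinatorics of $\beta\mathbb N$. This is the one genuinely non-formal step; everything else is bookkeeping with the explicit order in \cref{fig:non hausdorff}.
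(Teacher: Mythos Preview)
Your case analysis contains a self-contradiction. You correctly record that $y_i \le \omega$ for every $i$, but you then consider an open upset $U$ with $\omega \notin U$ that contains some (possibly infinitely many) of the $y_i$. This cannot happen: if $U$ is an upset and $y_i \in U$, then $\omega \in \upset y_i \subseteq U$. The ``delicate case'' you single out as the main obstacle is therefore vacuous.

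Once this is observed, the argument collapses to the paper's two-line proof: for an open upset $U$, either $U \cap Y_\omega = \varnothing$, or $\omega \in U$ and then $U \cap Y_\omega$ is an open neighborhood of the unique non-isolated point of $Y_\omega$, hence already clopen. In both cases $\cl(U) \cap Y_\omega = U \cap Y_\omega$, so $\cl U = U \cup \cl(U \cap \beta\mathbb N)$, which is a clopen upset by extremal disconnectedness of $\beta\mathbb N$ and the fact that $\beta\mathbb N = \max X$.

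It is also worth noting that your argument for the vacuous case is itself wrong. You assert that if $\cl X_i \subseteq U$ for all $i$ in some infinite index set $I$, then $X_\omega^* \subseteq \cl U$. Take $I$ with infinite complement $J$; the filter on $\mathbb N$ generated by $\bigcup_{j\in J} X_j$ together with $\{\mathbb N \setminus X_n : n \in \mathbb N\}$ has only infinite members (any finite intersection equals $\bigcup_{j \in J'} X_j$ for a cofinite $J' \subseteq J$), so it extends to a free ultrafilter $p$. Then $p \in X_\omega^*$ but $\bigcup_{i\in I} X_i \notin p$, whence $p \notin \cl\big(\bigcup_{i\in I} \cl X_i\big)$. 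Had the case been genuine, this would be a real gap. (Minor slip: in the case $\omega \in U$ you write $\cl U = W \cup Y_\omega$; it should be $W \cup (U \cap Y_\omega)$, since finitely many $y_i$ may be absent from $U$.)
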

\begin{proof}
	    It is sufficient to show that $\cl U$ is a clopen upset for each open upset $U \subseteq X$. Let $U$ be an open upset. Then $U \cap Y_\omega$ is open, and it is either empty or it must contain $\omega$. In both cases, $\cl(U) \cap Y_\omega = U \cap Y_\omega$, so $\cl U = U \cup \cl (U \cap \beta\mathbb N)$, which is clearly a clopen upset.
    \end{proof}

\begin{claim} \label{claim: loc part}
	The localic part of $X$ is $Y = Y_\omega \cup \mathbb N$.
\end{claim}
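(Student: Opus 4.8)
The plan is to unwind the definition: a point $y\in X$ belongs to the localic part $Y$ precisely when $\downset y$ is clopen, so the whole claim reduces to computing the down-set of each point of $X$ from the order in \cref{fig:non hausdorff} and checking clopenness. Before doing so I would record the relevant clopen sets. Since $X=\beta\mathbb N\sqcup Y_\omega$ is a disjoint union of Stone spaces, both $\beta\mathbb N$ and $Y_\omega$ are clopen in $X$; each $x_{i,j}$ is an isolated point of $\beta\mathbb N$, and each $y_i$ is an isolated point of $Y_\omega$ (the one-point compactification of $\mathbb N$), so $\{x_{i,j}\}$ and $\{y_i\}$ are clopen in $X$, whereas no point of $\mathbb N^*$ is isolated in $\beta\mathbb N$ and $\{\omega\}$ is not clopen.

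For the inclusion $Y_\omega\cup\mathbb N\subseteq Y$, I would compute: $\downset x_{i,j}=\{x_{i,j},y_i\}$, which is a union of two clopen singletons and hence clopen; $\downset y_i=\{y_i\}$, clopen; and $\downset\omega=\{\omega\}\cup\{y_i\mid i\in\mathbb N\}=Y_\omega$, which is clopen as one of the two summands of $X$. So each of these points is localic.

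For the reverse inclusion I would show that no point of the remainder $\mathbb N^*=\bigcup_n X_n^*\cup X_\omega^*$ is localic. If $z\in X_i^*$, then (reading off the order) the only point strictly below $z$ is $y_i$, so $\downset z=\{z,y_i\}$; if $z\in X_\omega^*$, then everything below $\omega$ lies below $z$, so $\downset z=\{z\}\cup Y_\omega$. In either case $\downset z$ is the union of $\{z\}$ with a clopen set not containing $z$, so if $\downset z$ were clopen then $\{z\}$ would be clopen, contradicting that $z$ is a non-isolated point of $\beta\mathbb N$. Hence $Y\subseteq Y_\omega\cup\mathbb N$, and combined with the previous step this gives $Y=Y_\omega\cup\mathbb N$.

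The only delicate point is an honest bookkeeping one: correctly reading the order from the diagram, in particular that $y_i\le\omega$ for \emph{every} $i$ (so that $\downset\omega$ is all of $Y_\omega$ rather than a proper subset) and that the points of $X_i^*$ and $X_\omega^*$ have the small down-sets described above (no unexpected comparabilities among remainder points, and no point of $\mathbb N^*$ below $\omega$). Beyond getting these incidences right, I do not expect any real obstacle.
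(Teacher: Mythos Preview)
Your proposal is correct and follows essentially the same approach as the paper. The only cosmetic difference is that for $z\in\mathbb N^*$ the paper observes directly that $\downset z\cap\beta\mathbb N=\{z\}$ is not open in the clopen summand $\beta\mathbb N$, whereas you compute the full down-set and peel off the clopen part in $Y_\omega$; both routes amount to the same isolation argument.
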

\begin{proof}
	We have $Y \cap \mathbb N^* = \varnothing$ since $\downset x \cap \beta \mathbb N = \{x\}$ is not open for all $x \in \mathbb N^*$, so $Y \subseteq Y_\omega \cup \mathbb N$. For the converse, if $y \in Y_\omega \setminus \{\omega\}$ then $\downset y = \{y\}$ is clopen. Also, $\downset \omega = Y_\omega$ is clopen, so $Y_\omega \subseteq Y$. For $x \in X_i$, we have $\downset x = \{x,y_i\}$ is clopen.
\end{proof}

\begin{claim} \plabel{claim: clopsups}
	Let $U \subseteq X$ be an upset. Then $U \in \clopsup(X)$ iff one of the following two conditions holds.
	\begin{enumerate}
			\item $U$ is a finite subset of $\mathbb N$. \label[claim: clopsups]{cond 1}
			\item $U \cap Y_\omega$ is cofinite, and $y_i \notin U$ implies $\cl(X_i) \cap U$ is a finite subset of $X_i$. \label[claim: clopsups]{cond 2}
		\end{enumerate}
\end{claim}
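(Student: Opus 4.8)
The plan is to decompose a clopen upset $U$ of $X$ according to whether $\omega\in U$, compute $\min U$ explicitly in each case, and read off the Scott condition $\min U\subseteq Y$ using the description $Y=Y_\omega\cup\mathbb N$ from \cref{claim: loc part}. First I would record the order-theoretic facts that are visible from \cref{fig:non hausdorff}: $\beta\mathbb N$ is an antichain when viewed as a subposet of $X$ (in particular each of its points is maximal in $X$); $\min X=\{y_i\mid i\in\mathbb N\}$; the only points strictly below $\omega$ are the $y_i$ (i.e.\ $\downset\omega=Y_\omega$); and $\upset y_i\cap\beta\mathbb N=\cl X_i\cup X_\omega^*$ while $\upset\omega\cap\beta\mathbb N=X_\omega^*$. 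I would also use that a clopen subset of a copy of $\beta\mathbb N$ contained in its dense discrete part is finite (it is compact and discrete), and conversely that a finite set of isolated points of a Stone space is clopen.

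In the case $\omega\notin U$, since $y_i\le\omega$ no $y_i$ is in $U$, so $U\cap Y_\omega=\varnothing$ and $U\subseteq\beta\mathbb N$; as $\beta\mathbb N$ is an antichain in $X$ we get $\min U=U$, so $U$ is a Scott upset iff $U\subseteq Y\cap\beta\mathbb N=\mathbb N$, which (for clopen $U$) means $U$ is finite. Conversely a finite subset of $\mathbb N$ that is an upset is clopen and has $\min U=U\subseteq\mathbb N\subseteq Y$. Hence in this case $U\in\clopsup(X)$ exactly when \cref{cond 1} holds.

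In the case $\omega\in U$, I would note that $U\cap Y_\omega$ is a clopen subset of the one-point compactification $Y_\omega$ containing its point at infinity, hence cofinite, so $F\coloneqq\{i\mid y_i\notin U\}$ is finite; moreover $U$ is an upset containing $\omega$, so $X_\omega^*\subseteq U$, and $y_i\in U$ forces $\cl X_i\subseteq U$ for each $i\notin F$. The heart of the argument is then the identity
\[
  \min U=\{y_i\mid i\notin F\}\cup\bigcup_{i\in F}(\cl X_i\cap U).
\]
For "$\supseteq$": each $y_i$ with $i\notin F$ is already minimal in $X$; and if $w\in\cl X_i\cap U$ with $i\in F$, then the only point of $X$ strictly below $w$ is $y_i\notin U$, so $w$ is minimal in $U$. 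For "$\subseteq$": $\omega$ and every point of $X_\omega^*$ lie above some $y_i\in U$ (one exists as $F$ is finite), and every point of $\cl X_i$ with $i\notin F$ lies above $y_i\in U$, so none of these is minimal; and these points together with the right-hand side exhaust $U$. Consequently $\min U\subseteq Y$ iff each $\cl X_i\cap U$ with $i\in F$ is contained in $Y\cap\cl X_i=X_i$, i.e.\ is a finite subset of $X_i$; together with the cofiniteness of $U\cap Y_\omega$ this is precisely \cref{cond 2}. For the converse, \cref{cond 2} forces some $y_i\in U$ and hence $\omega\in U$; then $U\cap\beta\mathbb N=X_\omega^*\cup\bigcup_{i\notin F}\cl X_i\cup\bigcup_{i\in F}(\cl X_i\cap U)$ has complement $\bigcup_{i\in F}\bigl(\cl X_i\setminus(\cl X_i\cap U)\bigr)$, a finite union of clopen sets, so $U$ is clopen, and $\min U\subseteq Y$ by the identity above. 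Since the two conditions are mutually exclusive — the first has $\omega\notin U$, the second $\omega\in U$ — this finishes the characterization.

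The only genuine obstacle I foresee is the $\min U$ computation in the second case: one has to check carefully that the points of $\cl X_i$ ($i\in F$) lying in $U$ are actually minimal in $U$ — this is exactly where it matters that $y_i\notin U$ and that $y_i$ is the unique point strictly below any point of $\cl X_i$ — and dually that $\omega$, the points of $X_\omega^*$, and the points of $\cl X_i$ for $i\notin F$ all fail to be minimal. The remaining steps are routine bookkeeping about clopen subsets of $\beta\mathbb N$ and of the one-point compactification $Y_\omega$, using the identification of the localic part from \cref{claim: loc part}.
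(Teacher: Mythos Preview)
Your proposal is correct and follows essentially the same route as the paper: the case split on $\omega\in U$ is equivalent to the paper's split on whether $U\cap Y_\omega$ is cofinite, and your explicit computation of $\min U$ in the second case unpacks exactly the reasoning the paper summarizes as ``otherwise $\min U\not\subseteq Y$.'' The only cosmetic difference is that the paper invokes \cref{Scott upset lemma} to get $U\subseteq\mathbb N$ in the first case, whereas you use the observation that $\beta\mathbb N$ is an antichain so $\min U=U$; both arguments are equally short.
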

\begin{proof}
	($\Rightarrow$) Suppose $U \in \clopsup(X)$ and $U \cap Y_\omega$ is not cofinite. Since $U \cap Y_\omega$ is clopen and not cofinite, $\omega \not \in U$. Hence, $U \cap Y_\omega = \varnothing$ since it is an upset. Therefore, $U \subseteq \beta\mathbb N = \cl \mathbb N$. By \cref{Scott upset lemma}, $U \subseteq \mathbb N$, and by compactness $U$ is finite. Suppose now that $U \cap Y_\omega$ is cofinite. If $y_i \notin U$, then $X_i^* \cap U = \varnothing$, since otherwise $U$ can't be Scott upset because $\min U \not\subseteq Y$ (see \cref{claim: Lspace}). Therefore, $U \cap \cl(X_i) \subseteq X_i$, and it has to be finite since it is compact.

	($\Leftarrow$) If $U$ is a finite subset of $\mathbb N$, then $U \in \clopup(X)$ and $U \subseteq Y$ by \cref{claim: loc part}, so $U \in \clopsup(X)$. Now suppose \ref{cond 2} holds. Then $\omega \in U$ and $y_i \notin U$ implies $X_i^* \cap U = \varnothing$. Hence, $\min U \subseteq Y_\omega \cup \mathbb N = Y$, so it suffices to show that $U$ is clopen. Since $U \cap Y_\omega$ is cofinite it is clopen in $Y_\omega$. Moreover,
	\[U \cap \beta \mathbb N = \bigcup\{\cl(X_i) \cap U \mid y_i \not \in U\} \cup \bigcup\{\cl(X_i) \mid y_i \in U\} \cup X_\omega^*.\]
	By \ref{cond 2}, $\bigcup\{\cl(X_i) \cap U \mid y_i \not \in U\} = \bigcup\{X_i \cap U \mid y_i \not \in U\} \subseteq \mathbb N$ is finite and hence clopen. Also, $\beta \mathbb N \setminus (\bigcup\{\cl(X_i) \mid y_i \in U\} \cup X_\omega^*)$ is clopen since only finitely many $y_i \notin U$. Therefore, $\bigcup\{\cl(X_i) \mid y_i \in U\} \cup X_\omega^*$ is clopen. Thus, $U \cap \beta \mathbb N$ is clopen, and so $U = (U \cap Y_\omega) \cup (U \cap \beta \mathbb N)$ is clopen.
\end{proof}

\begin{claim}
	$X$ is an algebraic L-space.
\end{claim}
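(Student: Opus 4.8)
The plan is to invoke that $X$ is an L-space (\cref{claim: Lspace}) and then verify directly the remaining clause in the definition of an algebraic L-space: that $\core U$ is dense in $U$, i.e.\ $U\subseteq\cl\core U$, for every $U\in\clopup(X)$. I will bound $\core U$ from below separately on the two pieces $\beta\mathbb N$ and $Y_\omega$ of $X$.

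On $\beta\mathbb N$: each singleton $\{x\}$ with $x\in\mathbb N$ is a finite subset of $\mathbb N$, hence a clopen Scott upset by \cref{claim: clopsups}, so $\mathbb N\cap U\subseteq\core U$. Since $\beta\mathbb N$ is clopen in $X$ and $\mathbb N$ is dense in $\beta\mathbb N$, the set $\mathbb N\cap U$ is dense in the clopen set $\beta\mathbb N\cap U$, giving $\beta\mathbb N\cap U\subseteq\cl(\mathbb N\cap U)\subseteq\cl\core U$.

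On $Y_\omega$: the crux is to show $y_i\in\core U$ whenever $y_i\in U$. The key observation is that $y_i<\omega$ forces $\omega\in U$ (as $U$ is an upset), so $U\cap Y_\omega$ is a cofinite clopen subset of $Y_\omega$; writing $B=\{k\in\mathbb N\mid y_k\in U\}$, which is then cofinite, I will take as witness $V:=(U\cap Y_\omega)\cup X_\omega^*\cup\bigcup_{k\in B}\cl(X_k)$ and check four points: $V$ is an upset (since $\upset y_k=\{y_k,\omega\}\cup\cl(X_k)\cup X_\omega^*$ for $k\in B$, while every remaining element of $V$ is maximal); $V$ is clopen (its trace on $Y_\omega$ is $U\cap Y_\omega$, and its trace on $\beta\mathbb N$ is $\beta\mathbb N\setminus\bigcup_{k\notin B}\cl(X_k)$, which is clopen because only finitely many $k$ lie outside $B$); $V\in\clopsup(X)$ by \cref{claim: clopsups} (its trace on $Y_\omega$ is cofinite, and $y_k\notin V$ forces $k\notin B$, whence $\cl(X_k)\cap V=\varnothing$); and $V\subseteq U$ (because $X_\omega^*\subseteq\upset\omega\subseteq U$ and $\cl(X_k)\subseteq\upset y_k\subseteq U$ for $k\in B$). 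As $y_i\in U\cap Y_\omega\subseteq V$, this gives $y_i\in\core U$. Finally, whenever $\omega\in U$ the cofinite set $\{y_k\mid k\in B\}\subseteq\core U$ accumulates at $\omega$, so $\omega\in\cl\core U$; hence $Y_\omega\cap U\subseteq\cl\core U$. Together with the previous paragraph this yields $U\subseteq\cl\core U$.

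The step I expect to be the main obstacle is producing the clopen Scott upset $V$ squeezed between $\{y_i\}$ and $U$: on the blocks $\cl(X_k)$ with $y_k\notin U$ the upset $U$ may contain points of $X_k^*$, which no Scott upset can contain, so $V$ has to be trimmed there; this works out precisely because $\omega\in U$ makes the set of such bad blocks finite, which is what keeps $V$ clopen.
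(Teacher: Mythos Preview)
Your proof is correct and follows essentially the same route as the paper's. Your witness $V=(U\cap Y_\omega)\cup X_\omega^*\cup\bigcup_{k\in B}\cl(X_k)$ is exactly $\upset(U\cap Y_\omega)$, which is what the paper uses (more tersely) to handle all of $Y_\omega$ at once; and your density argument on $\beta\mathbb N$ is the paper's case for $\mathbb N^*$ rephrased. The only minor redundancy is your separate treatment of $\omega$: since $\omega\in U\cap Y_\omega\subseteq V$, you already have $\omega\in\core U$ directly, so the accumulation argument is unnecessary (though not wrong).
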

\begin{proof}
	Suppose $U \in \clopup(X)$. We need to show $U \subseteq \cl(\core U)$, so suppose $x \in U$. We consider three cases.
	\begin{enumerate}[(i)]
		\item
		If $x \in Y_\omega$ then $\omega \in \upset x \subseteq U$, so $U \cap Y_\omega$ is cofinite. Then 
		\[
			x \in \upset(U \cap Y_\omega) \in \clopsup(X)
			\]
			by \cref{cond 2}, and $\upset (U \cap Y_\omega) \subseteq \core U$.
		\item If $x \in \mathbb N$, then $\upset x = \{x\} \in \clopsup(X)$  by \cref{cond 1}, and ${\upset x \subseteq \core U}$.
		\item Suppose $x \in \mathbb N^*$. Since $U$ is clopen in $X$, $U \cap \beta \mathbb N$ is clopen, and therefore 
		\[
			\cl (U \cap \mathbb N) = U \cap \beta \mathbb N.
		\]
		Hence, $x \in \cl (U \cap \mathbb N)$. Suppose now that $V$ is a clopen neighborhood of $x$. Then $V \cap (U \cap \mathbb N) \neq \varnothing$, but $U \cap \mathbb N = \bigcup\{\{n\} \mid n \in \mathbb N \cap U\} \subseteq \core U$ since $\{n\} \in \clopsup(X)$ by \cref{cond 1}. Therefore, $x \in \cl \core U$. \qedhere
	\end{enumerate}
\end{proof}

\begin{claim} \label{claim: ari l-space}
	$X$ is an arithmetic L-space.
	\end{claim}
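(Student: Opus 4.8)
The plan is to reduce everything to \cref{def arithemitc}. The preceding claim already establishes that $X$ is an algebraic L-space, so all that remains is to show that $\clopsup(X)$ is closed under binary intersections; by \cref{def arithemitc} this is exactly what is needed (and it is equivalent to the $\core$-distributivity identity $\core U \cap \core V = \core(U\cap V)$). The key input is the combinatorial description of clopen Scott upsets from \cref{claim: clopsups}: an upset $U$ lies in $\clopsup(X)$ iff either $U$ is a finite subset of $\mathbb N$, or $U \cap Y_\omega$ is cofinite and $y_i \notin U$ forces $\cl(X_i) \cap U$ to be a finite subset of $X_i$.

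So I would take $U, V \in \clopsup(X)$ and argue that $U \cap V$, which is automatically an upset, satisfies one of the two conditions of \cref{claim: clopsups}. If one of $U$, $V$ meets the first condition, say $U$ is a finite subset of $\mathbb N$, then $U \cap V \subseteq U$ is again a finite subset of $\mathbb N$, so it meets the first condition and we are done. Otherwise both $U$ and $V$ meet the second condition: then $(U \cap V) \cap Y_\omega = (U\cap Y_\omega)\cap(V\cap Y_\omega)$ is an intersection of two cofinite subsets of $Y_\omega$, hence cofinite; and if $y_i \notin U \cap V$, then $y_i \notin U$ or $y_i \notin V$, say $y_i \notin U$, so $\cl(X_i) \cap (U \cap V) \subseteq \cl(X_i) \cap U$ is a finite subset of $X_i$. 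Thus $U \cap V$ meets the second condition, and in every case $U \cap V \in \clopsup(X)$.

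I do not expect a genuine obstacle here: the heavy lifting was already done in proving \cref{claim: clopsups} and the algebraicity of $X$, and what is left is an elementary case split using the fact that finite subsets of the $X_i$ and cofinite subsets of $Y_\omega$ are each closed under the relevant intersections. The only point to be careful about is to check that $U \cap V$ is genuinely an upset before applying \cref{claim: clopsups}, which is immediate since it is an intersection of upsets.
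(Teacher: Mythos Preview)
Your proposal is correct and follows essentially the same approach as the paper: reduce to \cref{def arithemitc}, then do the case split on the two conditions of \cref{claim: clopsups}, using that finite subsets of $\mathbb N$ are closed under intersection and that in the remaining case cofiniteness of $U\cap V\cap Y_\omega$ and finiteness of $\cl(X_i)\cap U\cap V$ follow from the corresponding properties of $U$ or $V$.
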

\begin{proof}
	It suffices to show that $U \cap V \in \clopsup(X)$ for $U,V \in \clopsup(X)$ (see \cref{def arithemitc}), so suppose $U,V \in \clopsup(X)$. Then $U$ and $V$ satisfy one of the two conditions of \cref{claim: clopsups}. If either $U$ or $V$ is a finite subset of $\mathbb N$, then so is their intersection. Suppose $U$ and $V$ both satisfy \cref{cond 2}. Since a finite intersection of cofinite sets is cofinite, $U \cap V \cap Y_\omega$ is cofinite. If $y_i \notin U \cap V$, then either $y_i \notin U$ or $y_i \notin V$. Without loss of generality we may assume the former. Then $\cl(X_i) \cap U \cap V \subseteq \cl(X_i) \cap U$ is a finite subset of $X_i$. Thus, \cref{cond 2} holds for $U \cap V$, and so $U \cap V \in \clopsup(X)$.
	\end{proof}

\begin{claim}
	$\min Y_d = Y_\omega \setminus \{\omega\}$.
\end{claim}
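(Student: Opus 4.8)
The plan is to identify $Y_d$ first, and then read off $\min Y_d$ from the order. Recall from \cref{claim: loc part} that $Y = Y_\omega \cup \mathbb N$. We apply \cref{eqv conditions rmax} to decide which points of $Y$ lie in $Y_d$. First I would dispense with the isolated points of $\mathbb N$: each $n \in \mathbb N$ is maximal in $Y$ (indeed $\upset n = \{n\}$), so $n \in \max Y \subseteq Y_d$ by \cref{Y subset Y_d}; but $n$ sits above $y_i$ for the unique $i$ with $n \in X_i$, so $n \notin \min Y_d$. Next, the point $\omega$: we have $\upset \omega = \{\omega\} \cup X_\omega^*$, and $\max \upset \omega = X_\omega^* \subseteq \mathbb N^*$. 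Using condition \ref{eqv rmax 3}, pick the clopen Scott upset $V = \{\omega\} \cup X_\omega^* \cup (Y_\omega \setminus \{\omega\})$ — wait, more carefully: a clopen Scott upset $V$ containing $X_\omega^*$ but not $\omega$ cannot exist since $V$ is an upset and $X_\omega^* \subseteq \upset\omega$ forces $\omega \in V$; so I instead argue $\omega \in Y_d$ directly via \ref{eqv rmax 4}, by exhibiting $x \in \max X$ with $\{\omega\} = \max(\downset x \cap Y)$ — but $\downset x \cap Y$ for $x \in X_\omega^*$ contains all of $Y_\omega$, whose maximum is $\omega$; so $\omega \in Y_d$. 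Hence $\omega$ is above every $y_i$, so $\omega \notin \min Y_d$.

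The crux is the points $y_i$. I claim each $y_i \in Y_d$ and in fact $y_i \in \min Y_d$. For membership in $Y_d$, use \ref{eqv rmax 4}: take $x \in X_i^* \subseteq \max X$. Then $\downset x \cap Y = X_i \cup \{y_i\}$ by the order in \cref{fig:non hausdorff} (the elements below a point of $X_i^* = \cl(X_i) \cap \mathbb N^*$ are exactly $X_i \cup \{y_i\}$ among localic points, since $y_i \le x$ and each $x_{i,j} \le x$ only if $x \in \cl\{x_{i,j}\}$, which a free ultrafilter point is not). Actually the localic points below $x$ are $\{y_i\} \cup \{x_{i,j} : x_{i,j} \in x\}$; if $x$ is a free ultrafilter on $X_i$ then no $x_{i,j}$ lies below it, so $\downset x \cap Y = \{y_i\}$ — wait, we must also include points below via $\omega$: is $\omega \le x$? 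No, since $x \in X_i^*$ and $\upset\omega \cap \beta\mathbb N = X_\omega^*$ is disjoint from $X_i^*$. So $\downset x \cap Y = \{y_i\}$ exactly, giving $\{y_i\} = \max(\downset x \cap Y)$ with $x \in \max X$, hence $y_i \in Y_d$ by \ref{eqv rmax 4}. For minimality: the only localic point strictly below $y_i$ would have to be in $\downset y_i = \{y_i\}$, so $y_i$ is minimal in $Y$, a fortiori in $Y_d$. Conversely, no other point of $Y_d$ is minimal, as shown above ($\omega$ and each $n$ lie strictly above some $y_i$). Therefore $\min Y_d = \{y_0, y_1, y_2, \dots\} = Y_\omega \setminus \{\omega\}$.

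The main obstacle is the bookkeeping of the order in \cref{fig:non hausdorff} — specifically verifying that for $x$ a free ultrafilter point concentrated on $X_i$, the only localic point below $x$ is $y_i$, and that no clopen Scott upset can separate the relevant points; this requires carefully tracking which of $y_i$, $\omega$, and the isolated $x_{i,j}$ sit below a given point of $\mathbb N^*$, and handling the case $x \in X_\omega^*$ versus $x \in X_i^*$ separately. Everything else is a routine application of \cref{eqv conditions rmax}. I would organize the proof as: (1) $y_i \in Y_d$ for all $i$, via \ref{eqv rmax 4}; (2) each $y_i \in \min Y_d$ since $\downset y_i = \{y_i\}$; (3) every other point of $Y_d$ (namely the $n \in \mathbb N$ and $\omega$, whose membership in $Y_d$ follows from $\max Y \subseteq Y_d$ and \ref{eqv rmax 4} respectively) lies strictly above some $y_i$, hence is not minimal; conclude $\min Y_d = Y_\omega \setminus \{\omega\}$.
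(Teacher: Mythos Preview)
Your proposal is correct and follows essentially the same approach as the paper: verify $Y_d = Y$ via condition~\ref{eqv rmax 4} of \cref{eqv conditions rmax}, then read off the minimum from the order. The paper's proof is terser (one sentence covers all cases), while you treat the cases $n \in \mathbb N$, $\omega$, and $y_i$ separately; the slight variation of invoking $\max Y \subseteq Y_d$ for the points of $\mathbb N$ is fine but unnecessary, since condition~\ref{eqv rmax 4} handles them too (take $x = n$ itself).

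One point of confusion worth flagging: you repeatedly bring in ultrafilter membership and closures in $\beta\mathbb N$ when analyzing $\downset x$ for $x \in X_i^*$, writing things like ``$x_{i,j} \le x$ only if $x \in \cl\{x_{i,j}\}$'' and ``$x_{i,j} \in x$''. This is a red herring. The Priestley order on $X$ is \emph{exactly} the one drawn in \cref{fig:non hausdorff}; in particular $\beta\mathbb N$ is an antichain in $X$, and the only nontrivial relations are $y_i < z$ for $z \in \cl X_i \cup \{\omega\}$ and $\omega < z$ for $z \in X_\omega^*$. So for $x \in X_i^*$ one has $\downset x = \{x, y_i\}$ immediately from the picture, without any detour through the internal structure of $\beta\mathbb N$. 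Your conclusion $\downset x \cap Y = \{y_i\}$ is correct, but the route to it is more tortuous than needed.
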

\begin{proof}
	Observe that for each $y \in Y = \mathbb N \cup Y_\omega$, there is $x \in \max X$ with $\{y\} = \max(\downset x \cap Y)$. Therefore, $Y_d = \mathbb N \cup Y_\omega$ by \cref{eqv rmax 4}. Consequently, $\min Y_d = Y_\omega \setminus\{\omega\}$.
\end{proof}

\begin{claim} \label{claim: not hausdorff}
	$\min Y_d$ is not Hausdorff.
\end{claim}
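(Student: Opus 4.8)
The plan is to prove the stronger statement that any two nonempty open subsets of $\min Y_d$ intersect, which in particular rules out the Hausdorff separation (and shows $T_1$ is the best one can do here, in agreement with \cref{fact: t1}). Recall from \cref{topology Y} that, with the subspace topology inherited from $Y$, the open subsets of $\min Y_d$ are exactly the sets $U \cap \min Y_d$ for $U \in \clopup(X)$, and that by the previous claim $\min Y_d = Y_\omega \setminus \{\omega\} = \{y_0, y_1, y_2, \dots\}$, which is countably infinite.

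The key lemma to establish is: if $U \in \clopup(X)$ meets $\min Y_d$, then $U \cap \min Y_d$ is cofinite in $\min Y_d$. Indeed, suppose $y_i \in U$. Since $y_i \le \omega$ in the order of \cref{fig:non hausdorff} and $U$ is an upset, we get $\omega \in U$. Because $X = \beta\mathbb N \sqcup Y_\omega$, the subspace $Y_\omega$ is clopen in $X$, so $U \cap Y_\omega$ is clopen in $Y_\omega$; as $Y_\omega$ is the one-point compactification of the countable discrete space $\{y_0, y_1, \dots\}$ and $U \cap Y_\omega$ is an open set containing the point at infinity $\omega$, it must have finite complement in $Y_\omega$. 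Hence only finitely many $y_j$ lie outside $U$, i.e., $U \cap \min Y_d$ is cofinite in $\min Y_d$.

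With the lemma in hand, every nonempty open subset of $\min Y_d$ is cofinite, so any two of them meet: the union of their (finite) complements cannot be all of the infinite set $\min Y_d$. In particular, for distinct $y_i \ne y_j$ there are no disjoint open neighborhoods of $y_i$ and $y_j$, so $\min Y_d$ is not Hausdorff. The only step that uses the specific construction of $X$ — and hence the only real obstacle — is the observation that a clopen upset containing some $y_i$ is forced up to $\omega$ and thereby to cofinitely many $y_j$; once that is in place, the conclusion is a formal consequence of the one-point-compactification structure of $Y_\omega$.
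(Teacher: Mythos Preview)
Your proof is correct and reaches the same conclusion as the paper: every nonempty open subset of $\min Y_d$ is cofinite, so the space carries the cofinite topology on a countably infinite set and hence is not Hausdorff. The difference is in how the cofiniteness is obtained. The paper first observes that $\clopsup(X)$ is a basis for $\min Y_d$ (since $U\cap Y=\core U\cap Y$) and then reads off cofiniteness directly from \cref{cond 2} of \cref{claim: clopsups}. You bypass the Scott-upset classification entirely and argue straight from $\clopup(X)$: any clopen upset containing some $y_i$ must contain $\omega$, and a clopen subset of the one-point compactification $Y_\omega$ containing $\omega$ has finite complement. Your route is a bit more self-contained (it does not lean on \cref{claim: clopsups}), while the paper's is shorter because that classification is already in hand; substantively the two arguments are the same.
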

\begin{proof}
	$\clopsup(X)$ forms a basis of $\min Y_d$ because $U \cap Y = \core U \cap Y$ for each $U \in \clopup(X)$. Consequently, it follows from \cref{cond 2} that $\min Y_d$ is equipped with the cofinite topology, which is not Hausdorff since $\min Y_d$ is infinite.
		\end{proof}

\cref{claim: ari l-space,claim: not hausdorff} yield the following: 
\begin{theorem}
	There exist arithmetic L-spaces $X$ such that $\min Y_d$ is not Hausdorff.
\end{theorem}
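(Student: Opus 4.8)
The plan is simply to exhibit the example: take $X$ to be the Priestley space constructed in \cref{main example}, namely the disjoint union $\beta\mathbb N \sqcup Y_\omega$ of the Stone--\v{C}ech compactification of $\mathbb N$ (partitioned into countably many clopen copies $\cl X_i \cong \beta\mathbb N$) and the one-point compactification $Y_\omega = \{y_0,y_1,\dots\}\cup\{\omega\}$ of a countable discrete space, ordered so that each $y_i$ lies below every point of $\cl X_i$, each $y_i$ lies below $\omega$, and $\omega$ lies below $X_\omega^*$. The theorem is then nothing more than the conjunction of two facts already established for this $X$: that $X$ is an arithmetic L-space (\cref{claim: ari l-space}) and that $\min Y_d$ is not Hausdorff (\cref{claim: not hausdorff}). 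So the proof is a single line --- apply these two claims to the space of \cref{main example} --- and via the homeomorphism of \cref{thm: min Yd = max Ld} it also produces an arithmetic frame whose maximal $d$-spectrum is not Hausdorff.

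The substance lies in the claims feeding those two, which I would verify in the order they appear. First, $X$ is a Priestley space: Priestley separation is routine once one observes that $\beta\mathbb N$ and each $\downset\cl X_i$ are clopen. Next, $X$ is an L-space: the closure of an open upset $U$ is $U \cup \cl(U\cap\beta\mathbb N)$, because $U\cap Y_\omega$ is already closed (it is empty or contains $\omega$). Then the localic part is computed to be $Y = Y_\omega \cup \mathbb N$, the points of $\mathbb N^*$ failing to be localic. Then comes the combinatorial description of $\clopsup(X)$ in \cref{claim: clopsups}, after which algebraicity and arithmeticity (closure of $\clopsup(X)$ under binary intersection) follow, and finally $\min Y_d = Y_\omega\setminus\{\omega\}$ is identified, carrying the cofinite topology because $\clopsup(X)$ restricts to the cofinite sets on it.

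The crux --- and the whole point of the construction --- is to force $\min Y_d$ to be infinite yet carry the cofinite topology, which immediately rules out Hausdorffness. This is arranged by making each localic point $y_i$ non-maximal in $Y$ (it lies below both $\omega$ and the maximal points in $X_i^*$), so that any clopen Scott upset meeting $Y_\omega$ must contain $\omega$ and hence be cofinite in $Y_\omega$; restricting to $\min Y_d = \{y_0,y_1,\dots\}$ then yields exactly the cofinite subsets. The step I expect to be most delicate is \cref{claim: clopsups}, in particular its forward direction: given $U\in\clopsup(X)$ with $y_i\notin U$, one must rule out $X_i^*\subseteq U$ using that the minimal points of a Scott upset lie in $Y$ (via \cref{Scott upset lemma}), while also handling the dichotomy of $U\cap Y_\omega$ being empty versus cofinite. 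Once that combinatorial lemma is in hand, arithmeticity and the cofinite-topology conclusion are straightforward bookkeeping.
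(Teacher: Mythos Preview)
Your proposal is correct and matches the paper's approach exactly: the theorem is stated as an immediate consequence of \cref{claim: ari l-space} and \cref{claim: not hausdorff} applied to the space of \cref{main example}, and your outline of the supporting claims (Priestley separation, L-space, localic part, the combinatorial description of $\clopsup(X)$, algebraicity, arithmeticity, and the identification of $\min Y_d$ with the cofinite topology) follows the paper's sequence of claims faithfully. One minor imprecision: when $y_i\notin U$ you need $X_i^*\cap U=\varnothing$ (not merely $X_i^*\not\subseteq U$), but your reasoning via $\min U\subseteq Y$ gives exactly that.
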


As promised at the end of \cref{sec: compactness}, we now demonstrate that $\min Y_d$ can be compact Hausdorff without $L$ having a unit.

\begin{example} \label{example: no unit and compact}
    Redefine the order in the space $X$ of
	 \cref{main example}
	 as in \cref{fig:non min}.
\begin{figure}[H]
\centering
\begin{tikzpicture}[
	bullet/.style={fill,circle,inner sep=2pt},scale=.75]
	\begin{scope}[nodes=bullet]
		\node[label=below:$y_0$] (y0) at (3,-3) {};
		\foreach \x in {0,1,2} {
			\node[label={[label distance=-2mm]above:$x_{0,\x}$}]  (\x) at ($(\x+2,0)$) {};
			\draw (y0) to (\x);
			};
	\end{scope}
		\path (2) -- node[auto=false]{\ldots} (6,0);
		\draw[very thick, |-|] (6, 0) to node[label=above:$X^*_0$] {} (8,0);
		\draw (y0) to (6,0);
		\draw (y0) to (8,0);
		
	\begin{scope}[shift={(5,-2)}]
		\begin{scope}[nodes=bullet]
		\node[label=below:$y_1$] (y1) at (3,-3) {};
		\foreach \x in {0,1,2} {
			\node[label={[label distance=-2mm]above:$x_{1,\x}$}] (\x) at ($(\x+2,0)$) {};
			\draw (y1) to (\x);
			};
		\end{scope}
		\path (2) -- node[auto=false]{\ldots} (6,0);
		\draw[very thick, |-|] (6, 0) to node[label=above:$X^*_1$] {} (8,0);
		\draw (y1) to (6,0);
		\draw (y1) to (8,0);
			\end{scope}
	\begin{scope}[shift={(11,-4.4)}]
	\node[label=below:$\omega$,bullet] (omega) at (3,-3) {};
	\draw[very thick, |-|] (5, 0) to node[label=above:$X_\omega^*$] {} (9,0);
	\draw (omega) -- (5,0);
	\draw (omega) -- (9,0);
	\end{scope}
	\draw (y1) -- (y0);
    \draw (y1) -- ($(y1)!.5!(omega)$);
	\path (y1) -- node[pos=.75, sloped, fill=white, outer sep=1em]{\huge\ldots} (omega);
	\end{tikzpicture}
	\caption{The Priestley space of an arithmetic frame without a unit whose maximal $d$-spectrum is compact Hausdorff}\label{fig:non min}
	\end{figure}
	\noindent 
        A similar reasoning to the above yields that $X$ is an arithmetic L-space and its localic part is $Y \coloneqq \mathbb N \cup (Y_\omega \setminus \{\omega\})$. Observe that $Y_d = Y$ and $\min Y_d = \varnothing$, so it is trivially compact Hausdorff. However, $X$ has no cofinal clopen Scott upset since $X_{\omega}^* \subseteq \max X$ and $\downset X_{\omega}^* \cap Y = \varnothing$. Consequently, $X$ is the L-space of an arithmetic frame without units.
\end{example}

To characterize when $\min Y_d$ is Hausdorff, we recall (see, e.g., \cite[Def.~VI-6.7]{Compendium2003}) that a topological space $X$ is \emph{coherent} if the binary intersection of compact saturated sets is compact. The space $X$ is \emph{stably locally compact} if it is sober, locally compact, and coherent. A stably locally compact space is \emph{stably compact} if it is compact, and \emph{spectral} if in addition compact open sets form a basis.
It is well known (see, e.g., \cite[p.~75]{Johnstone1982}) that a spectral space is Hausdorff iff it is $T_1$.
The next lemma generalizes this result to 
stably locally compact spaces.

\begin{lemma} \label{lemma: min Yd stablylc implies t2}
	If $X$ is stably locally compact, then $X$ is Hausdorff iff $X$ is $T_1$.
\end{lemma}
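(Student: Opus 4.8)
The forward implication is trivial, since every Hausdorff space is $T_1$. So the plan is to prove the converse: a stably locally compact $T_1$ space $X$ is Hausdorff. I would reduce this to the single claim that \emph{every compact saturated subset of $X$ is closed} (equivalently, that the original topology of $X$ agrees with its patch topology). Granting this claim, Hausdorffness follows by the standard local-compactness argument: given distinct $x,y$, the set $X\setminus\{y\}$ is open by $T_1$ and contains $x$, so local compactness provides a compact saturated $Q$ with $x\in\intr Q\subseteq Q\subseteq X\setminus\{y\}$; since $Q$ is closed, $X\setminus Q$ is an open neighbourhood of $y$ disjoint from $\intr Q\ni x$.

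To prove the claim, I would fix a compact saturated $Q\subseteq X$ (nonempty, else there is nothing to prove) and a point $y\notin Q$, and produce an open neighbourhood of $y$ missing $Q$. For each $q\in Q$ we have $q\neq y$, so $X\setminus\{q\}$ is open (by $T_1$) and contains $y$; local compactness then yields an open set $V_q$ and a compact saturated set $P_q$ with $y\in V_q\subseteq P_q\subseteq X\setminus\{q\}$, so in particular $q\notin P_q$. Now consider the family of all finite intersections $Q\cap P_{q_1}\cap\dots\cap P_{q_n}$ with $q_1,\dots,q_n\in Q$. This family is down-directed, each of its members is compact saturated by coherence, and its total intersection equals $Q\cap\bigcap_{q\in Q}P_q=\varnothing$ because $q\notin P_q$ for every $q$. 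By the Hofmann--Mislove theorem (this is where sobriety of $X$ enters) — a down-directed family of compact saturated sets whose intersection is contained in an open set has a member contained in that open set — applied with the open set $\varnothing$, some member $Q\cap P_{q_1}\cap\dots\cap P_{q_n}$ is already empty. Then $W:=V_{q_1}\cap\dots\cap V_{q_n}$ is open, contains $y$, and is contained in $P_{q_1}\cap\dots\cap P_{q_n}$, which is disjoint from $Q$. Hence $W$ is the desired neighbourhood, and $Q$ is closed.

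Putting these together, the final write-up would be: (i) observe Hausdorff $\Rightarrow$ $T_1$; (ii) prove the ``compact saturated $\Rightarrow$ closed'' lemma as above, using $T_1$, local compactness, coherence, and sobriety (via Hofmann--Mislove); (iii) conclude Hausdorffness by exhibiting the separating opens $\intr Q$ and $X\setminus Q$ for each pair $x\neq y$.

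The main obstacle is step (ii): local compactness together with $T_1$ alone is \emph{not} enough to force compact saturated sets to be closed, and the proof must use coherence in an essential way (to keep the finite intersections of the auxiliary sets $P_q$ compact) together with sobriety (to extract a finite subfamily via Hofmann--Mislove). Once this is established, the remaining steps are purely formal.
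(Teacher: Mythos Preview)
Your proof is correct and uses the same ingredients as the paper's---local compactness plus $T_1$ to produce enough compact saturated neighbourhoods, coherence to keep finite intersections compact, and a Hofmann--Mislove-type finite-extraction step (the paper cites \cite[Lem.~VI-6.4]{Compendium2003} for this). The organization differs, however. The paper argues directly: for distinct $x,y$ it observes that the families $\mathcal K_x,\mathcal K_y$ of compact saturated neighbourhoods satisfy $\bigcap\mathcal K_x=\{x\}$ and $\bigcap\mathcal K_y=\{y\}$ (using $T_1$), so $\bigcap(\mathcal K_x\cup\mathcal K_y)=\varnothing$; one Hofmann--Mislove application then yields a finite subfamily with empty intersection, and coherence collapses it to disjoint $K_x\in\mathcal K_x$, $K_y\in\mathcal K_y$. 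You instead factor through the intermediate lemma ``compact saturated $\Rightarrow$ closed'' (equivalently, the topology coincides with its patch topology), proving that lemma by the same Hofmann--Mislove/coherence mechanism and then reading off Hausdorffness from local compactness. Your route is slightly longer but isolates a reusable statement; the paper's route is more direct and avoids the detour.
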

\begin{proof}
	We only need to show the right-to-left implication. Suppose $x, y \in X$ are distinct. Let $\mathcal K_x = \{K \subseteq X \mid K \mbox{ is a compact saturated neighborhood of }x\}$ and define $\mathcal K_y$ similarly.  It suffices to show that there exist $K_x \in \mathcal K_x$ and $K_y \in \mathcal K_y$ such that $K_x \cap K_y = \varnothing$. 
        Since $X$ is $T_1$ and locally compact, 
    for each $z \in X$ distinct from $x$, there is a compact saturated neighborhood $K$ of $x$ missing $z$. Therefore, $\bigcap \mathcal K_x = \{x\}$, and similarly ${\bigcap \mathcal K_y = \{y\}}$. Consequently,
    $\bigcap \mathcal K_x \cap \bigcap \mathcal K_y = \varnothing$.
    By \cite[Lem.~VI-6.4]{Compendium2003}, there exists a finite $\mathcal K \subseteq \mathcal K_x \cup \mathcal K_y$ such that $\bigcap \mathcal K = \varnothing$. Since $X$ is stably locally compact, $\mathcal K_x$ and $\mathcal K_y$ are directed. Therefore, there are $K_x \in \mathcal K_x$ and $K_y \in \mathcal K_y$ such that $K_x \cap K_y = \varnothing$. Thus, $X$ is Hausdorff. 
\end{proof}

\begin{theorem} \label{thm: Hausdorff}
    Let $X$ be an arithmetic L-space such that $\min Y_d$ is locally compact. Then $\min Y_d$ is Hausdorff iff $\min Y_d$ is stably locally compact.
\end{theorem}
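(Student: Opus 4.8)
The plan is to split the biconditional into its two implications, each of which falls out of tools already in place.

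\textbf{Plan for the easy direction} ($\min Y_d$ stably locally compact $\Rightarrow$ Hausdorff). By \cref{fact: t1}, $\min Y_d$ is always $T_1$. So if in addition $\min Y_d$ is stably locally compact, then \cref{lemma: min Yd stablylc implies t2} applies verbatim and yields that $\min Y_d$ is Hausdorff. Note that this direction does not even use the standing hypothesis that $\min Y_d$ is locally compact, since local compactness is part of ``stably locally compact.''

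\textbf{Plan for the other direction} ($\min Y_d$ Hausdorff $\Rightarrow$ stably locally compact). Here I would show that, for a space which is already known to be locally compact, being Hausdorff forces the remaining two ingredients of stable local compactness ---\ sobriety and coherence ---\ for free. Local compactness is the standing hypothesis. For sobriety: in any Hausdorff space the only irreducible closed sets are singletons; indeed, if a closed set $C$ contained distinct points $x,y$, pick disjoint opens $U\ni x$, $V\ni y$, and then $C=(C\setminus U)\cup(C\setminus V)$ writes $C$ as a union of two proper closed subsets, contradicting irreducibility. Since $\min Y_d$ is $T_1$, each such singleton $\{x\}$ is its own closure and has $x$ as its unique generic point, so $\min Y_d$ is sober. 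For coherence: because $\min Y_d$ is $T_1$, its specialization order is trivial, so \emph{every} subset is saturated; in particular ``compact saturated'' just means ``compact.'' Since $\min Y_d$ is Hausdorff, compact subsets are closed, so for compact (saturated) $K_1,K_2$ the intersection $K_1\cap K_2$ is a closed subset of the compact set $K_1$, hence compact. Thus $\min Y_d$ is coherent, and therefore stably locally compact.

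\textbf{Expected obstacle.} There is no substantial obstacle: all the real work was done in \cref{lemma: min Yd stablylc implies t2} (the generalization of ``spectral and $T_1$ implies Hausdorff'' to stably locally compact spaces) and in \cref{fact: t1}. The only points requiring care are the two routine observations above ---\ that $T_1$-ness makes all subsets saturated and that Hausdorffness makes all compact sets closed ---\ which together show a locally compact Hausdorff space is automatically stably locally compact. Finally, combining this theorem with the homeomorphism $\min Y_d\cong\max L_d$ of \cref{thm: min Yd = max Ld} transfers the statement to $\max L_d$, giving the advertised characterization of Hausdorffness of the maximal $d$-spectrum.
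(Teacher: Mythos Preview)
Your proposal is correct and follows essentially the same route as the paper: for one direction you combine \cref{fact: t1} with \cref{lemma: min Yd stablylc implies t2}, and for the other you observe that a locally compact Hausdorff space is automatically sober and coherent (compact subsets being closed). The only cosmetic difference is that the paper cites a reference for ``Hausdorff implies sober'' where you spell out the irreducible-closed-sets argument, and your closing remark about transferring to $\max L_d$ is handled in the paper as a separate corollary rather than inside this proof.
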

\begin{proof}
	First suppose that $X$ is Hausdorff. Then $X$ is sober (see, e.g., \cite[p.~43]{Johnstone1982}). Let $K,M\subseteq X$ be compact saturated. Since $X$ is Hausdorff, $K,M$ are closed, so $K\cap M$ is closed. Since it is a closed subset of a compact set, it must be compact. Thus, $X$ is stably locally compact. Conversely, since $\min Y_d$ is $T_1$ by \cref{fact: t1}, $\min Y_d$ is Hausdorff by \cref{lemma: min Yd stablylc implies t2}.
 \end{proof}

\begin{corollary} 
	Let $L$ be an arithmetic frame with a unit and $X$ its L-space.     \begin{enumerate}
        \item $\min Y_d$ is Hausdorff iff $\min Y_d$ is stably locally compact. 
        \item $\max L_d$ is Hausdorff iff $\max L_d$ is stably locally compact.
    \end{enumerate}
\end{corollary}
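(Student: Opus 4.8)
The plan is to obtain both statements as packaging of three results already established: \cref{min Yd compact} (a unit forces $\min Y_d$ to be compact), \cref{thm: Hausdorff} (local compactness of $\min Y_d$ upgrades "Hausdorff" to "stably locally compact"), and \cref{thm: min Yd = max Ld} (the homeomorphism $\min Y_d \cong \max L_d$). For part~(1) I would first observe that the hypothesis that $L$ has a unit is condition~\ref{min Yd compact-1} of \cref{min Yd compact}; chasing the implications in that theorem (namely $(1)\Leftrightarrow(2)\Leftrightarrow(2.5)$ and $(2.5)\Rightarrow(3)\Leftrightarrow(4)$) yields that $\min Y_d$ satisfies condition~\ref{min Yd compact-4}, i.e.\ $\min Y_d$ is compact, and in particular locally compact. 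This is exactly the hypothesis needed to invoke \cref{thm: Hausdorff}, which immediately gives that $\min Y_d$ is Hausdorff iff $\min Y_d$ is stably locally compact, proving~(1).

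For part~(2) I would transport~(1) along the homeomorphism $\alpha : \min Y_d \to \max L_d$ provided by \cref{thm: min Yd = max Ld}. The point to make here is that every property occurring in the statement is a topological invariant: "Hausdorff" obviously is, and "stably locally compact" is by definition the conjunction of sobriety, local compactness, and coherence, each of which is preserved by homeomorphisms. Hence $\max L_d$ is Hausdorff iff $\min Y_d$ is Hausdorff iff (by~(1)) $\min Y_d$ is stably locally compact iff $\max L_d$ is stably locally compact.

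I do not expect a genuine obstacle in this argument; it is essentially bookkeeping. The only subtle points worth spelling out explicitly are (i) that the unit hypothesis really does deliver local compactness of $\min Y_d$ — which it does, in the even stronger form of compactness, via \cref{min Yd compact} — so that \cref{thm: Hausdorff} applies without an extra standing assumption, and (ii) that "stably locally compact" is invariant under homeomorphism, which follows once one unwinds its definition through \cref{lemma: min Yd stablylc implies t2} and the surrounding discussion. With these two remarks in place, both parts of the corollary follow in a few lines.
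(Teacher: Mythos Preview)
Your proposal is correct and follows essentially the same route as the paper: derive compactness of $\min Y_d$ from the unit via \cref{min Yd compact}, feed the resulting local compactness into \cref{thm: Hausdorff}, and transport part~(1) to part~(2) via the homeomorphism of \cref{thm: min Yd = max Ld}. The only cosmetic difference is that the paper proves the forward direction of~(1) directly (compact Hausdorff $\Rightarrow$ stably locally compact) rather than routing both directions through \cref{thm: Hausdorff}, but this is not a substantive divergence.
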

\begin{proof} 
    We only prove (1) as (2) follows from (1) and \cref{thm: min Yd = max Ld}. 
    Since $L$ has a unit, $\min Y_d$ is compact by 
    \cref{min Yd compact}. 
    First suppose that $\min Y_d$ is Hausdorff. Then
        $\min Y_d$ is compact Hausdorff, and hence $\min Y_d$ is stably locally compact. Conversely, if $\min Y_d$ is stably locally compact, then \cref{thm: Hausdorff} applies, and hence $\min Y_d$ is Hausdorff. 
        \end{proof}

We conclude the paper with several interesting open problems:
\begin{itemize}
    \item It remains open
whether \cref{thm: Hausdorff} can be reformulated as an equivalence between sobriety and Hausdorffness. Note that in \cref{main example}, $\min Y_d$ is locally compact and coherent, but it fails to be Hausdorff solely because it is not sober. 
    \item     It also remains open whether $\min Y_d$ is always locally compact (and/or coherent). In the absence of sobriety, local compactness of $\min Y_d$ is not equivalent to 
        $\Omega(\min Y_d)$ being a continuous frame (see, e.g., \cite[p.~310]{Johnstone1982}). This disparity emphasizes the importance of sobriety in these considerations. Indeed, as noted above, it is
        plausible that in this setting sobriety alone implies Hausdorffness.
        \item Resolving the above questions requires developing a general method for identifying which topological spaces can be realized as $\min Y_d$. While each Stone space can be realized as such, it remains open whether the same can be said about each compact Hausdorff space (we note that it follows from \cite{HenriksenVermeerWoods1987} that each compact Hausdorff quasi F-space can be realized this way). 
\end{itemize}

\bibliographystyle{alpha-init}
\bibliography{ref}

\end{document}